\setlist{nosep}
\setlist[enumerate]{label=\((\roman*)\),ref=(\roman*)}
\newcommand{\field}[1]{\mathbb #1}
\newcommand{\QQ}{\field Q}
\newcommand{\NN}{\field N}
\newcommand{\sHom}{\mathcal{H}om}
\newcommand{\sExt}{\mathcal E\!xt}
\newcommand{\strshf}[1]{\mathscr{O}_{#1}}
\DeclareMathOperator{\Spec}{Spec}
\DeclareMathOperator{\pr}{pr}
\DeclareMathOperator{\colim}{colim}
\DeclareMathOperator{\tr}{tr}
\DeclareMathOperator{\Sing}{Sing}
\DeclareMathOperator{\supp}{supp}
\DeclareMathOperator{\cl}{cl}
\newcommand{\Ab}{\mathbf{Ab}}
\newcommand{\inj}{\hookrightarrow}
\newcommand{\isom}{\xrightarrow{\sim}}
\newcommand{\cK}{\mathcal{K}}
\newcommand{\sG}{\mathscr{G}}
\newcommand{\sI}{\mathscr{I}}
\newcommand{\sO}{\mathscr{O}}
\DeclareMathOperator{\codim}{codim}
\DeclareMathOperator{\Bl}{Bl}
\DeclareMathOperator{\snc}{snc}
\DeclareMathOperator{\cor}{cor}
\newcommand{\schs}[1]{(#1, \Phi_{#1})}
\newcommand{\lsp}[1]{(#1, \Delta_{#1})}
\newcommand{\lsps}[1]{(#1, \Delta_{#1}, \Phi_{#1})}
\newcommand{\ologd}[2]{\Omega_{#1}^{#2}(\log \Delta_{#1})}
\newcommand{\lstrshf}[1]{\mathscr{O}_{#1}(-\Delta_{#1})}
\newcommand{\lcanshf}[1]{\omega_{#1}(\Delta_{#1})}
\numberwithin{equation}{section}
\theoremstyle{plain}
\newtheorem{theorem}[equation]{Theorem}
\crefname{theorem}{Theorem}{Theorems}
\newtheorem*{theorem*}{Theorem}
\newtheorem{lemma}[equation]{Lemma}
\crefname{lemma}{Lemma}{Lemmas}
\newtheorem{proposition}[equation]{Proposition}
\crefname{proposition}{Proposition}{Propositions}
\newtheorem{conjecture}[equation]{Conjecture}
\crefname{conjecture}{Conjecture}{Conjectures}
\newtheorem{corollary}[equation]{Corollary}
\crefname{corollary}{Corollary}{Corollaries}
\crefname{problem}{Problem}{Problems}
\theoremstyle{plain}
\newtheoremstyle{named}{}{}{\itshape}{}{\bfseries}{.}{.5em}{#1 \thmnote{#3}}
\theoremstyle{named}
\theoremstyle{definition}
\newtheorem{definition}[equation]{Definition}
\crefname{definition}{Definition}{Definitions}
\crefname{variant}{Variant}{Variants}
\crefname{notation}{Notation}{Notations}
\newtheorem{convention}[equation]{Convention}
\crefname{convention}{Convention}{Conventions}
\crefname{claim}{Claim}{Claims}
\crefname{slogan}{Slogan}{Slogans}
\crefname{fact}{Fact}{Facts}
\crefname{assumption}{Assumption}{Assumptions}
\crefname{hypothesis}{Hypothesis}{Hypotheses}
\crefname{construction}{Construction}{Constructions}
\crefname{calculation}{Calculation}{Calculations}
\theoremstyle{remark}
\newtheorem{remark}[equation]{Remark}
\crefname{remark}{Remark}{Remarks}
\crefname{observation}{Observation}{Observations}
\newtheorem{example}[equation]{Example}
\crefname{example}{Example}{Examples}
\newtheorem*{example*}{Example}
\crefname{question}{Question}{Questions}
\newtheorem*{question*}{Question}
\crefname{warning}{Warning}{Warnings}
\crefname{enumi}{}{}
\crefname{figure}{Figure}{Figure}
\title{Correspondences in log Hodge cohomology}
\author{Charles Godfrey\\
Pacific Northwest National Laboratory\\
\href{mailto:charles.godfrey@pnnl.gov}{\texttt{charles.godfrey@pnnl.gov}}\\
}
\date{\today} 
\begin{document}
\maketitle
\thispagestyle{fancy}
\renewcommand{\headrule}{}
\renewcommand{\footrulewidth}{0.5pt}
\fancyhead[L,C,R]{}
\fancyfoot[L]{\footnotesize This work was completed while the
author was a PhD student in the University of Washington Department of Mathematics. The author was partially supported by the University of
Washington Department of Mathematics Graduate Research Fellowship, and by the
NSF grant DMS-1440140, administered by the Mathematical Sciences  Research
Institute, while in residence at MSRI during the program Birational Geometry
and Moduli Spaces.}
\fancyfoot[C,R]{}
\abstract{%
We construct correspondences in logarithmic Hodge theory over a perfect field of
arbitrary characteristic. These are represented by classes in the cohomology of
sheaves of differential forms with log poles and, notably, log \emph{zeroes} on
cartesian products of varieties. 
From one perspective this generalizes work of Chatzistamatiou and R\"ulling,
who developed (non-logarithmic) Hodge correspondences over
perfect fields of arbitrary characteristic; from another we provide partial
generalizations of more recent work of Binda, Park and {\O}stv{\ae}r on logarithmic Hodge correspondences by relaxing finiteness and strictness
conditions on the correspondences considered.
}
 

\section{Introduction}
\label{sec:intro}

Generally speaking, a \emph{correspondence} between two algebraic varieties \(X
\) and \(Y \) over a field \(k\) is a cycle or cohomology class on the product
\(X \times Y \). The study of such objects dates back (at least) to Lefschetz
\cite{lefschetzAlgebraicGeometry1953}, and features prominently in famous
conjectures on algebraic cycles (see e.g. \cite{voisin_decomp_diag}) and
Voevodsky's theory of motives (see e.g. \cite{MR2242284}). 

In a number of algebro-geometric research areas it has become commonplace to
work with pairs \(\lsp{X}\) consisting of a variety \(X\) together with a
divisor \(\Delta_X \) on \(X\). Such areas include moduli of varieties (where
pairs generalize the curves with marked points of
\cite{deligneIrreducibilitySpaceCurves1969}), birational geometry (where pairs
appear naturally, for example as the output of strong resolution of
singularities \cite{MR1658959}) and logarithmic geometry (in this case vast
generalizations of divisors \(\Delta_X\) are allowed \cite{MR3838359}). It is
natural to wonder about analogues of correspondences in this category of pairs,
and there have been efforts in this direction, for example development of
categories of logarithmic motives
\cite{bindaTriangulatedCategoriesLogarithmic2020}. 

In this paper, we focus on correspondences for logarithmic Hodge cohomology of
pairs \(\lsp{X}\), where \(X\) is a smooth (but not necessarily proper)
variety over a perfect field \(k\) and \(\Delta_X\) is a simple normal crossing
divisor on \(X \). These cohomology groups can be described as
\begin{equation}
  \label{eq:log-hodge-teaser}
  H^\ast \lsp{X} = \bigoplus H^q(X, \Omega_X^p(\log \Delta_X)),
\end{equation}
where \(\Omega_X(\log \Delta_X)\) is the sheaf of differential \(1\)-forms on
\(X\) with log poles along \(\Delta_X\) and \(\Omega_X^p(\log \Delta_X) \) the
\(p\)-th exterior power thereof. In addition we consider a generalization
where \(X\) comes with a \emph{family of supports} \(\Phi_X\), and the ordinary
cohomology groups on the right hand side of \cref{eq:log-hodge-teaser} are
replaced with cohomology with supports in \(\Phi_X\), namely \(H^q_{\Phi_X}(X,
\Omega_X^p(\log \Delta_X))\). Allowing for supports greatly expands the
applicability of our results: for example, it permits us to construct a
correspondence associated to a cycle \(Z \subset X \times Y \) in a situation
where neither \(X \) nor \(Y \) is proper over \(k\), but \(Z \) is proper over
both \(X \) and \(Y \).\footnote{One way that such a cycle \(Z\) might naturally
arise is as the closure of the graph of a birational equivalence \(X
\dashrightarrow Y\) of non-proper varieties.}

There are multiple motivations for investigating
correspondences for this particular cohomology of pairs: 
\begin{itemize}
  \item By analogy with the case of varieties (that is, without auxiliary
  divisors/log structures), we suspect that correspondences at the level of Chow
  cycles are more fundamental, and that (many) correspondences in logarithmic
  Hodge cohomology are obtained from Chow correspondences via a cycle morphism.
  However, as of this writing there is no full-fledged theory of Chow cohomology
  of pairs or log schemes (though there has been considerable progress, for
  instance in \cite{bohning2022prelog,barrott2018logarithmic}). Logarithmic
  Hodge cohomology is in contrast quite mature, appearing as early as
  \cite{MR498551}.
  \item Correspondences in (non-logarithmic) Hodge cohomology have found remarkable applications.
  For example, \cite{MR2923726} used them to prove birational invariance of the
  cohomology groups of the structure sheaf \(H^i(X, \strshf{X})\) for smooth
  varieties \(X\) over perfect fields of positive characteristic. In fact, attempting to implement a similar
  strategy with \emph{logarithmic} Hodge cohomology to obtain results on
  invariance of the cohomology groups  \(H^i(X, \strshf{X}(-\Delta_X))\) with
  respect to (a restricted class of) birational equivalences was the initial
  inspiration for this work. Ultimately that attempt was unsuccessful, as we
  describe in \Cref{sec:attempts}.
  \item There has been recent interest in logarithmic Hodge cohomology as a
  representable functor on a category of motives of log schemes over a perfect
  field \cite[\S 9]{bindaTriangulatedCategoriesLogarithmic2020}. While that work
  does also construct some correspondences, they are restricted to those
  associated with logarithmic Hodge cohomology classes of cycles \(Z \subset X
  \times Y\) which are \emph{finite} over \(X\) and obey additional strictness
  (in the sense of logarithmic geometry) conditions; we remove these restrictions.
\end{itemize}

The correspondences we construct are obtained from certain Hodge classes with
both log poles \emph{and} log zeroes. Our main result is: 
\begin{theorem}[= \cref{thm:log-hodge-corresp}]
  \label{thm:log-hodge-corresp-teaser}
  A class \(\gamma \in H^j_{P(\Phi_X, \Phi_Y)}(X \times Y, \ologd{X\times
  Y}{i}(-\mathrm{pr}_X^* \Delta_X))\) defines homomorphisms  
  \[    \cor (\gamma) : H_{\Phi_X}^q(X, \ologd{X}{p}) \to
  H_{\Phi_Y}^{q+j-d_X}(Y, \ologd{Y}{p+i-d_X}) \] by the formula
  \(\cor(\gamma)(\alpha) := \mathrm{pr}_{Y*}(\mathrm{pr}_X^* (\alpha) \smile
  \gamma)\). Moreover if \(\lsps{Z}\) is another snc pair with supports and
  \(\delta \in H^{j'}_{P(\Phi_Y, \Phi_Z)}(Y \times Z, \ologd{Y \times
  Z}{i'}(-\mathrm{pr}_Y^* \Delta_Y))\), then \[\mathrm{pr}_{X \times Z
  *}(\mathrm{pr}_{X \times Y}^* (\gamma) \smile \mathrm{pr}_{Y \times Z}^*
  (\delta)) \in H_{P(\Phi_X,\Phi_Z)}^{j  + j' - d_Y}(X \times Z, \ologd{X \times
  Z}{i + i' - d_Y}(-\mathrm{pr}_X^* \Delta_X)) \text{  and}\]
  \[\cor(\mathrm{pr}_{X \times Z *}(\mathrm{pr}_{X \times Y}^* (\gamma) \smile
  \mathrm{pr}_{Y \times Z}^* (\delta)) ) = \cor(\delta)\circ\cor(\gamma)  \] as
  homomorphisms \(H_{\Phi_X}^q(X, \ologd{X}{p}) \to H_{\Phi_Z}^{q+ j + j' - d_X
  - d_Y}(Z, \ologd{Z}{p+i+i' - d_X - d_Y})\). 
\end{theorem}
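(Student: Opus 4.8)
The plan is to establish the statement in two stages—first the well-definedness of $\cor(\gamma)$, then the composition law—by reducing everything to the structural properties of a \wlcts{} set up earlier in the paper. Concretely I would invoke the contravariant pullbacks $f^{*}$, the proper (Gysin) pushforwards $f_{*}$, the cup product $\smile$, and the support calculus $P(-,-)$, together with the compatibility axioms relating them: functoriality of $f^{*}$ and $f_{*}$, associativity of $\smile$ and its compatibility with pullback, the projection formula $f_{*}(x)\smile y = f_{*}\big(x\smile f^{*}y\big)$, and base change along cartesian squares.

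For well-definedness I would track the four pieces of bookkeeping—cohomological degree, form degree, support, and log twist—through the three operations $\alpha\mapsto\mathrm{pr}_X^{*}\alpha\mapsto\mathrm{pr}_X^{*}\alpha\smile\gamma\mapsto\mathrm{pr}_{Y*}(-)$. Since $\Delta_{X\times Y}=\mathrm{pr}_X^{*}\Delta_X+\mathrm{pr}_Y^{*}\Delta_Y$ as snc divisors, the natural map $\mathrm{pr}_X^{*}\ologd{X}{p}\to\ologd{X\times Y}{p}$ makes $\mathrm{pr}_X^{*}\alpha$ a class of bidegree $(p,q)$; cup product with $\gamma$ raises this to $(p+i,\,q+j)$, and—crucially—the log poles of $\mathrm{pr}_X^{*}\alpha$ along $\mathrm{pr}_X^{*}\Delta_X$ cancel the log zeroes $(-\mathrm{pr}_X^{*}\Delta_X)$ of $\gamma$, so the product is an honest section of $\ologd{X\times Y}{p+i}$. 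Finally $\mathrm{pr}_{Y*}$, pushforward along a projection whose fibers have dimension $d_X$, lowers the bidegree by $(d_X,d_X)$, landing in $\ologd{Y}{p+i-d_X}$ in degree $q+j-d_X$. The support bookkeeping is the substantive point here: because $X$ need not be proper, $\mathrm{pr}_{Y*}$ is only defined on classes whose support is proper over $Y$, and the family $P(\Phi_X,\Phi_Y)$ is constructed precisely so that $\mathrm{pr}_X^{*}\alpha\smile\gamma$ is supported there and pushes forward to a $\Phi_Y$-supported class.

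For the composition law I would pass to the triple product $X\times Y\times Z$, writing $\mathrm{pr}_{XY},\mathrm{pr}_{YZ},\mathrm{pr}_{XZ}$ for its projections onto the two-fold faces and $\mathrm{pr}_X,\mathrm{pr}_Y,\mathrm{pr}_Z$ for those onto the single factors, and reduce both sides to a single expression there. Unwinding the definitions, $\cor(\delta)\big(\cor(\gamma)(\alpha)\big)=\mathrm{pr}_{Z*}^{YZ}\big(\mathrm{pr}_Y^{YZ*}\mathrm{pr}_{Y*}^{XY}(\mathrm{pr}_X^{XY*}\alpha\smile\gamma)\smile\delta\big)$. Since $X\times Y\times Z=(X\times Y)\times_Y(Y\times Z)$ is cartesian, base change gives $\mathrm{pr}_Y^{YZ*}\mathrm{pr}_{Y*}^{XY}=\mathrm{pr}_{YZ*}\,\mathrm{pr}_{XY}^{*}$, and pullback-compatibility of $\smile$ rewrites the inner term as $\mathrm{pr}_{YZ*}\big(\mathrm{pr}_X^{*}\alpha\smile\mathrm{pr}_{XY}^{*}\gamma\big)$ on the triple product. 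The projection formula for $\mathrm{pr}_{YZ}$ absorbs the outer $\smile\delta$ into the pushforward, and functoriality $\mathrm{pr}_Z^{YZ}\circ\mathrm{pr}_{YZ}=\mathrm{pr}_Z$ collapses the composite, giving $\mathrm{pr}_{Z*}\big(\mathrm{pr}_X^{*}\alpha\smile\mathrm{pr}_{XY}^{*}\gamma\smile\mathrm{pr}_{YZ}^{*}\delta\big)$. Applying the projection formula for $\mathrm{pr}_{XZ}$ (and the same functoriality) to $\cor$ of the product class reduces the right-hand side to the identical expression, and comparing the two yields the asserted equality; the twist and support assertions for the product class follow from the same analysis of $\mathrm{pr}_{XZ*}$ as in the well-definedness step.

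The main obstacle I anticipate is not the numerical bookkeeping, which is routine once the $P(-,-)$ calculus is available, but justifying that base change and the projection formula hold in this twisted, support-sensitive form, and understanding the log twist under the Gysin map $\mathrm{pr}_{XZ*}$ that integrates out the factor $Y$. The log \emph{zeroes} $(-\mathrm{pr}_Y^{*}\Delta_Y)$ carried by $\delta$ are exactly what let the trace along the $Y$-fibers produce a class on $X\times Z$ whose surviving twist is only $(-\mathrm{pr}_X^{*}\Delta_X)$; confirming that the axioms of the \wlcts{} are robust enough to license the formal manipulations above—in particular base change across the cartesian square in the presence of both twists and the support families—is where the earlier structural results of the paper must do the real work.
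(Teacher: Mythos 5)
Your proposal is correct and follows essentially the same route as the paper's own proof: well-definedness via the supported cup product, the pole/zero cancellation, and the pushing/pulling structure of the two projections, and the composition law by reducing both sides to $\mathrm{pr}_{Z*}\big(\mathrm{pr}_X^*\alpha \smile \mathrm{pr}_{X\times Y}^*\gamma \smile \mathrm{pr}_{Y\times Z}^*\delta\big)$ on $X\times Y\times Z$ using the base change formula of \cref{lem:base-change-formula} and the projection formula of \cref{cor:proj-formula}. The one point to state precisely in a full write-up is that $\mathrm{pr}_X^*\alpha\smile\gamma$ lands in $H^{q+j}_{\Psi}\big(X\times Y, \Omega^{p+i}_{X\times Y}(\log \mathrm{pr}_Y^*\Delta_Y)\big)$ --- poles along $\mathrm{pr}_Y^*\Delta_Y$ only, not all of $\Delta_{X\times Y}$ --- since this is exactly what makes $\mathrm{pr}_Y$ a \emph{pushing} morphism of snc pairs with supports, so that the pushforward of \cref{lem:pushforward} applies.
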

In the above, \(\Delta_{XY} := \mathrm{pr}_X^* \Delta_X + \mathrm{pr}_Y^*
\Delta_Y\), a simple normal crossing divisor on \(X \times Y\). There is a simple heuristic explanation for the appearance of
differential forms in \(\ologd{X\times Y}{i}(-\mathrm{pr}_X^* \Delta_X)\):
working over the complex numbers, in the case where \(X\) and \(Y\) are both proper the class \(\cor(\gamma)(\alpha) :=
\mathrm{pr}_{Y*}(\mathrm{pr}_X^* (\alpha) \smile \gamma)\) can be computed
explicitly as an integral of the form 
\begin{equation}
  \label{eq:integrate-over-x}
  \int_X \alpha(x) \wedge \gamma(x, y),
\end{equation}
and this integral will only be finite when the log poles of \(\alpha\) along
\(\Delta_X\) are cancelled by complementary zeroes of the form \(\gamma(x, y)\)
along the preimage \(\mathrm{pr}_X^* \Delta_X\).

Our proof of \cref{thm:log-hodge-corresp-teaser} relies heavily on prior work on
both Hodge cohomology with supports \cite[\S 2]{MR2923726} and its logarithmic
variant \cite[\S 9]{bindaTriangulatedCategoriesLogarithmic2020}.
\Cref{sec:func-prop-log-hodge} is a rapid summary of those results. The key new
technical ingredient is a base change formula on the interaction of pushforward
and pullback operations in cartesian squares, proved in \Cref{sec:proj-form}.
\Cref{sec:correspondences} includes the proof of our main theorem. 

\subsection{Acknowledgements}
Thanks to Daniel Bragg, Yun Hao, Sarah Scherotzke, Nicol\`o Sibilla and
Mattia Talpo for helpful conversations, to Lawrence Jack Barrott for
illuminating email correspondence regarding logarithmic aspects of Chow and
Hodge, and to my advisor S\'andor Kov\'acs for many insightful discussions.
Thanks also to the participants of the Spring 2019 MSRI graduate student
seminar, in particular Giovanni Inchiostro and organizer Fatemeh Rezaee, for
feedback on early work on this paper.

\section{Functoriality properties of log Hodge cohomology with supports}
\label{sec:func-prop-log-hodge}

\subsection{Supports}
In order to obtain results that apply to correspondences between varieties \(X
\) and \( Y\) where neither  \(X \) nor \(Y\) is proper, it is necessary
to work with cohomology with \emph{ supports}, also known as local cohomology. A
primary source for the material of this subsection is \cite[\S IV]{MR0222093}. Let \(X\) be a noetherian scheme.

\begin{definition}[{\cite[\S IV]{MR0222093}, \cite[\S 1.1]{MR2923726}}] 
  \label{def:2}
  A \textbf{family of supports \(\Phi\) on \(X\)} is a non-empty
  collection \( \Phi \) of closed subsets of \(X\) such that
  \begin{itemize}
  \item If \( C \in \Phi \) and \( D \subset C \) is a closed
    subset, then \( D \in \Phi \).
  \item If \( C, D \in \Phi \) then \( C \cup D \in \Phi \).  
  \end{itemize}
\end{definition}

\begin{example}
  \label{ex:2}
  \( \Phi = \{ \text{  all closed subsets of  } X \, \,  \}  \) is a family of
  supports. More generally if \( \mathcal{C}\) is any collection of closed
  subsets  \(C \subset X\), there is a \emph{smallest} family of supports
  \(\Phi(\mathcal{C})\) containing \(\mathcal{C}\) (explicitly,
  \(\Phi(\mathcal{C})\) consists of finite unions \(\bigcup_{i} Z_{i} \) of
  closed subsets \(Z_{i} \subset C_{i}\) of elements \( C_{i} \in
  \mathcal{C}\)). Taking \(\Phi = \Phi(\{X\})\) recovers the previous example. A
  more interesting example is the case where for some fixed \(p \in \NN\), \(\Phi = \{ \text{closed sets } Z \subseteq X \, | \, \dim Z \leq p\}\).
\end{example}

There is a close relationship between families of supports on X and certain
collections of specialization-closed subsets of points on \(X \), and we can
also consider sheaves of families of supports --- for further details we refer
to \cite[\S IV.1]{MR0222093}.

If \( f : X \to Y \) is a morphism of noetherian schemes and
\( \Psi \) is a family of supports on \(Y\), then
\( \{ f^{-1}(Z) \, | \, Z \in \Psi \} \) is a family of closed subsets
of \(X\), and is closed under unions, but is \emph{not} in general
closed under taking closed subsets.
\begin{definition}
  \label{def:inv-image-supp}
  \( f^{-1} (\Psi) \) is the smallest family of supports on \(X\)
  containing \( \{ f^{-1}(Z) \, | \, Z \in \Psi \} \).
\end{definition}
 
Let \( \Phi \) be a family of supports on \(X\). The notation/terminology
\textbf{\( f|_\Phi \) is proper} will mean  \( f|_C \) is proper for every \( C
\in \Phi \). If \(f|_{\Phi}\) is proper then \( f(C) \subset Y\) is closed for
every \( C \in \Phi \) and in fact
\begin{equation}
  \label{eq:49}
   f(\Phi) = \{ f(C) \subset Y \, \, | \, \, C \in \Phi \}
\end{equation}
is a family of supports on \(Y\). The key point here is that if
\( D \subset f(C) \) is closed, then \( f^{-1}(D) \cap C \in \Phi \)
and \( D = f( f^{-1}(D) \cap C ) \).

\begin{definition}
  \label{def:sch-supp}
  A \textbf{scheme with supports \((X, \Phi_{X})\)} is a scheme \(X\)
  together with a family of supports \(\Phi_{X}\) on \(X\).  
\end{definition}


\begin{definition}
  \label{def:6}
  A \textbf{pushing morphism \(f: (X, \Phi_{X}) \to (Y, \Phi_{Y})\)} of schemes
  with supports is a morphism \(f: X \to Y\) of underlying schemes such that
  \(f|_{\Phi_{X}}\) is proper and \(f(\Phi_{X}) \subset \Phi_{Y}\).  A
  \textbf{pulling morphism \(f: X \to Y\)} is a morphism \(f: X \to Y\) such
  that \(f^{-1}(\Phi_{Y}) \subset \Phi_{X}\). 
\end{definition}

These morphisms provide two different categories  with underlying set of objects
schemes with supports \(\schs{X}\), and pushing/pulling morphisms respectively
(the verification is elementary; for instance a composition of pushing morphisms
is again a pushing morphism since compositions of proper morphisms are proper).
Schemes with supports provide a natural setting for describing functoriality
properties of local cohomology. Let \(\mathscr{F}\) be a
sheaf of abelian groups on a scheme with supports \((X,
\Phi_{X})\).\footnote{Simply put \(\mathscr{F}\) is a sheaf of abelian groups on \(X\).}  

\begin{definition}
  \label{def:7}
  The \textbf{sheaf of sections with supports} of \(\mathscr{F}\),
  denoted \(\underline{\Gamma}_{\Phi} (\mathscr{F})\), is obtained by setting
  \begin{equation}
    \label{eq:50}
    \begin{split}
      \underline{\Gamma}_{\Phi} (\mathscr{F})(U) & = \{\sigma \in \mathscr{F}(U) \, | \, \supp \sigma
      \in \Phi_{X} \rvert_{U} \, \}  \\
      & \\
    \end{split}
  \end{equation}
  for each open \( U \subset X \) (here \(\Phi_{X} \lvert_{U}\)
  is short for \(\iota^{-1} \Phi_{X}\) where \(\iota: U \to X\) is the
  inclusion). More explicitly: for a local section
  \(\sigma \in \mathscr{F}(U)\),
  \(\sigma \in \underline{\Gamma}_{\Phi} (\mathscr{F})(U) \) means
  \(\supp \sigma = C \cap U\) for a closed set \(C \subset \Phi_{X}\).
\end{definition}
The functor \(\underline{\Gamma}_{\Phi}\) is right adjoint to an exact functor,
for instance the inclusion of the subcategory \(\Ab_{\Phi}(X) \subset \Ab(X)\)
of abelian sheaves on \(X\) with supports in \(\Phi\); so,
\(\underline{\Gamma}_{\Phi}\) is left exact and preserves injectives. In the
case \(\Phi = \Phi(Z) \) for some closed \(Z \subset X\), this is proved in
\cite[\href{https://stacks.math.columbia.edu/tag/0A39}{Tag 0A39},
\href{https://stacks.math.columbia.edu/tag/0G6Y}{Tag
0G6Y},\href{https://stacks.math.columbia.edu/tag/0G7F}{Tag
0G7F}]{stacks-project}  --- the general case can then be obtained by writing
\(\underline{\Gamma}_{\Phi}\) as a filtered colimit:
\[\underline{\Gamma}_{\Phi} = \colim_{Z \in \Phi} \underline{\Gamma}_Z.\] The
right derived functor of \(\underline{\Gamma}_{\Phi}\) will be denoted
\(R\underline{\Gamma}_{\Phi} \). Taking global sections on \(X\) gives the
\textbf{sections with supports} of \(\mathscr{F}\): \(\Gamma_{\Phi}
(\mathscr{F}) := \Gamma_{X}(\underline{\Gamma}_{\Phi}( \mathscr{F}))\)
This is also left exact, and (the cohomologies of) its derived functor
give the \textbf{cohomology with supports in \(\Phi\)}: \(H_{\Phi}^{i}(X, \mathscr{F}):= R^{i} \Gamma_{\Phi}(\mathscr{F})\).

\begin{proposition}[
  ]   
  \label{prp:coho-with-supp}
  Cohomology with supports enjoys the following functoriality
  properties:
  \begin{enumerate}
  \item  \label{item:coho-with-supp1}If \(f: (X, \Phi_{X}) \to (Y, \Phi_{Y})\) is a \emph{pulling}
    morphism of schemes with supports, \(\mathscr{F}, \mathscr{G}\) are sheaves of abelian
    groups on \(X, Y\) respectively, and if
    \begin{equation}
      \label{eq:55}
      \varphi : \mathscr{G} \to f_{*} \mathscr{F} \text{ is a morphism
        of sheaves,}
    \end{equation}
    then there is a natural morphism
    \( R \underline{\Gamma}_{\Phi} \mathscr{G} \to R f_{*} R
    \underline{\Gamma}_{\Phi}\mathscr{F}\).  Similarly if
    \(\mathscr{F}\) and \(\mathscr{G}\) are quasicoherent then there
    are natural morphisms
    \( R \underline{\Gamma}_{\Phi} \mathscr{G} \to R f_{*} R
    \underline{\Gamma}_{\Phi}\mathscr{F}\).
  \item \label{item:coho-with-supp2} If \(f: (X, \Phi_{X}) \to (Y, \Phi_{Y})\) is a
    \emph{pushing} morphism, \(\mathscr{F}, \mathscr{G}\) are sheaves of
    abelian groups on \(X, Y\) respectively, and
    \begin{equation}
      \label{eq:198}
      \psi: Rf_{*} \mathscr{F} \to \mathscr{G} \text{ is a morphism in
        the derived category of } X,
    \end{equation}
    then there is a natural morphism
    \( R f_{*} R \underline{\Gamma}_{\Phi} (\mathscr{F}) \to R
    \underline{\Gamma}_{\Phi}\mathscr{G}\).
  \end{enumerate}
\end{proposition}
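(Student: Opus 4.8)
The plan is to reduce both parts to a single underived comparison between the section-with-supports functor \(\underline{\Gamma}_\Phi\) and the pushforward \(f_*\), and then to derive that comparison. In each case the given datum (\(\varphi\) in part~\ref{item:coho-with-supp1}, \(\psi\) in part~\ref{item:coho-with-supp2}) enters only formally at the very end, by applying \(R\underline{\Gamma}_{\Phi_Y}\) and pasting; the essential content is a statement at the level of sheaves about how supports transform under \(f_*\), with the direction of the comparison dictated by whether \(f\) is pulling or pushing.

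The key observation is that for a section \(s\) of \(f_*\mathscr{F}\) over an open \(V\subseteq Y\) --- that is, a section \(s\in\mathscr{F}(f^{-1}(V))\) --- its support as a section of \(f_*\mathscr{F}\) equals \(\overline{f(\supp s)}\cap V\), the closure of the image of its support as a section of \(\mathscr{F}\) (this is an immediate stalk computation). For a \emph{pulling} morphism I would use this to produce a natural inclusion \(\underline{\Gamma}_{\Phi_Y}(f_*\mathscr{F})\inj f_*\underline{\Gamma}_{\Phi_X}(\mathscr{F})\): if \(\overline{f(\supp s)}\subseteq C\in\Phi_Y\) then \(\supp s\subseteq f^{-1}(C)\), and \(f^{-1}(C)\in\Phi_X\) precisely because \(f^{-1}(\Phi_Y)\subseteq\Phi_X\). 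For a \emph{pushing} morphism the comparison runs the other way, giving \(f_*\underline{\Gamma}_{\Phi_X}(\mathscr{F})\inj\underline{\Gamma}_{\Phi_Y}(f_*\mathscr{F})\): if \(\supp s\subseteq C\in\Phi_X\) then \(f(\supp s)\subseteq f(C)\), and here properness of \(f|_{\Phi_X}\) guarantees that \(f(C)\) is already closed, while \(f(\Phi_X)\subseteq\Phi_Y\) places \(f(C)\in\Phi_Y\). Both maps are visibly natural in \(\mathscr{F}\). This bookkeeping with supports is where I expect the only genuine subtlety to lie; in particular the pushing case really needs the properness hypothesis built into \cref{def:6}, without which \(\overline{f(C)}\) could fail to lie in \(\Phi_Y\).

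To derive these comparisons I would invoke two facts recorded above: \(f_*\) carries injective sheaves to flasque sheaves, and flasque sheaves are acyclic both for \(f_*\) and for \(\underline{\Gamma}_{\Phi_Y}\) (for the latter, via \(\underline{\Gamma}_\Phi=\colim_Z\underline{\Gamma}_Z\) and the vanishing of local cohomology of flasque sheaves); and \(\underline{\Gamma}_{\Phi_X}\) preserves injectives. By the standard identification of a derived composite with the composite of derived functors, these give \(R(\underline{\Gamma}_{\Phi_Y}\circ f_*)\simeq R\underline{\Gamma}_{\Phi_Y}\circ Rf_*\) and \(R(f_*\circ\underline{\Gamma}_{\Phi_X})\simeq Rf_*\circ R\underline{\Gamma}_{\Phi_X}\). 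Deriving the sheaf-level natural transformation of the previous paragraph --- concretely, evaluating on an injective resolution \(\mathscr{F}\to I^\bullet\) and noting that \(f_*\underline{\Gamma}_{\Phi_X}I^\bullet\) computes the right-hand composite while \(\underline{\Gamma}_{\Phi_Y}f_*I^\bullet\) computes the left-hand one --- then yields, in the pulling case, a morphism \(R\underline{\Gamma}_{\Phi_Y}(Rf_*\mathscr{F})\to Rf_*R\underline{\Gamma}_{\Phi_X}(\mathscr{F})\) and, in the pushing case, a morphism \(Rf_*R\underline{\Gamma}_{\Phi_X}(\mathscr{F})\to R\underline{\Gamma}_{\Phi_Y}(Rf_*\mathscr{F})\).

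Finally I would splice in the given data. In part~\ref{item:coho-with-supp1}, compose \(R\underline{\Gamma}_{\Phi_Y}(\varphi)\) --- where \(\varphi\colon\mathscr{G}\to f_*\mathscr{F}\) is viewed in \(\D(Y)\) through the canonical map \(f_*\mathscr{F}\to Rf_*\mathscr{F}\) --- with the pulling morphism above to obtain \(R\underline{\Gamma}_{\Phi_Y}\mathscr{G}\to R\underline{\Gamma}_{\Phi_Y}(Rf_*\mathscr{F})\to Rf_*R\underline{\Gamma}_{\Phi_X}\mathscr{F}\). In part~\ref{item:coho-with-supp2}, compose the pushing morphism with \(R\underline{\Gamma}_{\Phi_Y}(\psi)\), where \(\psi\colon Rf_*\mathscr{F}\to\mathscr{G}\) is the given derived-category map, to obtain \(Rf_*R\underline{\Gamma}_{\Phi_X}\mathscr{F}\to R\underline{\Gamma}_{\Phi_Y}(Rf_*\mathscr{F})\to R\underline{\Gamma}_{\Phi_Y}\mathscr{G}\). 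The quasicoherent variant of part~\ref{item:coho-with-supp1} follows by the same argument, replacing injective resolutions by resolutions adapted to \(D_{\mathrm{qc}}\) and using that \(f_*\) and \(\underline{\Gamma}_\Phi\) preserve quasicoherence in that setting, so no new input is required.
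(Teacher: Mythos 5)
Your proof is correct, but it takes a more self-contained route than the paper, which gives no in-line argument at all: the paper's entire proof is a citation of \cite[\href{https://stacks.math.columbia.edu/tag/0G78}{Tag 0G78}]{stacks-project} together with pointers to \cite[\S 2.1--2.2]{MR2923726} and \cite[Constructions 9.4.2, 9.5.3]{bindaTriangulatedCategoriesLogarithmic2020}. What you have written out is essentially the mechanism underlying those references: the stalk computation \(\supp_Y(s) = \overline{f(\supp_X s)}\cap V\) for \(s \in (f_*\mathscr{F})(V)\), the resulting underived inclusions \(\underline{\Gamma}_{\Phi_Y}(f_*\mathscr{F}) \subseteq f_* \underline{\Gamma}_{\Phi_X}(\mathscr{F})\) (pulling) and \(f_*\underline{\Gamma}_{\Phi_X}(\mathscr{F}) \subseteq \underline{\Gamma}_{\Phi_Y}(f_*\mathscr{F})\) (pushing), and the passage to derived categories using that \(\underline{\Gamma}_{\Phi_X}\) preserves injectives while \(f_*\) sends injectives to flasque, hence \(\underline{\Gamma}_{\Phi_Y}\)-acyclic, sheaves (this last point needs the colimit \(\underline{\Gamma}_{\Phi} = \colim_{Z \in \Phi}\underline{\Gamma}_Z\) and noetherianness of the spaces involved, both available in the paper's setup). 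Your bookkeeping is accurate at the two spots where it genuinely matters: closure of \(\Phi_X\) under passing to closed subsets is what puts \(\supp_X(s)\) in \(\Phi_X|_{f^{-1}(V)}\) in the pulling case, and properness of \(f|_{\Phi_X}\) from \cref{def:6} is what makes \(f(C)\) closed, so that \(\overline{f(\supp_X s)} \subseteq f(C) \in \Phi_Y\), in the pushing case; you also implicitly corrected the statement's typo in \cref{item:coho-with-supp2}, where \(\psi\) is of course a morphism in the derived category of \(Y\), not of \(X\). What the paper's citation buys is brevity and compatibility with the precise constructions it reuses later; what your version buys is a uniform, reference-free derivation of both comparison maps from a single support computation. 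The one place you remain as terse as the paper is the quasicoherent variant in \cref{item:coho-with-supp1}, which both you and the original dispatch with a sentence.
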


Both parts of the proposition follow from
\cite[\href{https://stacks.math.columbia.edu/tag/0G78}{Tag
0G78}]{stacks-project};  \Cref{item:coho-with-supp1} is discussed in detail in
\cite[\S 2.1]{MR2923726} and \cref{item:coho-with-supp2} can be extracted from
\cite[\S 2.2]{MR2923726} (although it doesn't appear to be stated explicitly).
See also \cite[Constructions 9.4.2,
9.5.3]{bindaTriangulatedCategoriesLogarithmic2020}

\subsection{Differential forms with log poles}
Let \(k\) be a perfect field. 

\begin{definition}
  \label{def:log-smooth-pairs}
  A \textbf{snc pair with supports \(\lsps{X}\)} over \(k\) is a smooth scheme
  \(X\) separated and of finite type over \(k\) with a family of supports
  \(\Phi_{X}\) together with a reduced, effective divisor \(\Delta_{X}\) on
  \(X\) such that \(\supp \Delta_{X}\) has simple normal crossings, in the sense
  that for any point \(x \in X\) there are regular parameters \(z_1 , \dots, z_c
  \in \sO_{X,x}\) such that \(\supp \Delta_X = V(z_1 \cdot z_2 \cdots z_r)\) on
  a Zariski neighborhood of \(x\).\footnote{This is equivalent to the more
  general definition \cite[Def.
  7.2.1]{bindaTriangulatedCategoriesLogarithmic2020} in the case where the base
  scheme is \(\Spec k\), which is all we need.} The \textbf{interior \(U_{X}\)} of
  a snc pair with supports \((X, \Delta_{X}, \Phi_{X})\) is
  \begin{equation}
    \label{eq:57}
    U_{X}: = X \setminus \supp \Delta_{X}
  \end{equation}
  The inclusion of \(U_{X}\) in \(X\) is denoted by
  \( \iota_{X}: U_{X} \to X \).
\end{definition}

Here \(\supp \Delta_{X}\)
denotes the \textbf{support} of \(\Delta_{X}\) (if \(\Delta_{X} = \sum_{i} a_{i}
D_{i}\) where the \(D_{i}\) are prime divisors, then \(\supp \Delta_{X} =
\cup_{i} D_{i}\)). Similarly let \(j_{X}: \supp \Delta_{X} \to X\) denote the
evident inclusion.

\begin{definition}[{compare with \cite[Def. 1.1.4]{MR2923726}}]
  \label{def:push-pull}
  A \textbf{pulling morphism \(f: \lsps{X} \to \lsps{Y}\) of
    snc pairs with supports} is a pulling morphism
  \(f: X \to Y\) of underlying schemes with support such that
    \(f^{-1}(\supp \Delta_{Y}) \subset \supp \Delta_{X}\); equivalently, \(f\)
    restricts to a morphism \(f|_{U_X}: U_X \to U_Y\).
  A \textbf{pushing morphism \(f: \lsps{X} \to \lsps{Y}\) of
    snc pairs with supports} is a pushing morphism of
  underlying schemes with support such that \(f^{*} \Delta_{Y} = \Delta_{X}\).   
\end{definition}
Note that if \(f: \lsps{X} \to \lsps{Y}\) is a pushing morphism then \(U_{X} =
f^{-1} (U_{Y})\), so for example if \(f:X \to Y\) is proper then so is the
induced map \(U_{X} \to U_Y\). 
\begin{convention}[{compare with \cite[1.1.5]{MR2923726}}]
  \label{conv:props-of-mps-of-pairs}
  A morphism of snc pairs with supports \(f: (X, \Delta_{X}, \Phi_{X}) \to
  \lsps{Y}\) is flat, proper, an immersion, etc. if and only if the same is true
  of the underlying morphism of schemes \(f: X\to Y\).  A diagram
  of snc pairs with supports
  \begin{equation}
    \label{eq:59}
    \begin{tikzcd}
      (X', \Delta_{X'}, \Phi_{X'}) \arrow[r, "g'"] \arrow[d, "f'"'] & (X, \Delta_{X},
      \Phi_{X}) \arrow[d, "f"] \\
      (Y', \Delta_{Y'}, \Phi_{Y'}) \arrow[r, "g"'] & \lsps{Y}
    \end{tikzcd}
  \end{equation}
  is \textbf{cartesian} if and only if the induced diagram of
  underlying schemes 
  \begin{equation}
    \label{eq:62}
    \begin{tikzcd}
      X' \arrow[r, "g'"] \arrow[d, "f'"'] \arrow[dr, phantom, "\Box"] &
      X \arrow[d, "f"] \\
      Y' \arrow[r, "g"'] & Y
    \end{tikzcd}
  \end{equation}
  is cartesian.\footnote{If we take the red pill of logarithmic
  geometry, it starts to seem almost more reasonable to only require flatness,
  properness, cartesianness and so on of the induced maps of \emph{interiors}
  \(U_X \to U_Y\). However we do use the stronger restrictions of the given
  definition in some of the proofs below.}
\end{convention}
The terminology is meant to suggest that pushing (resp. pulling) morphisms
induce pushforward (resp. pullback) maps on log Hodge cohomology, as we now
describe. 

If \(\lsp{X}\) is an snc pair, or more generally a normal separated scheme of
finite type \(X\) over \(k\) together with a sequence of effective Cartier
divisors \(D_1, \dots, D_N \subseteq X\) with sum \(\Delta_X = \sum_i D_i\),
then it comes with a sheaf of \emph{differential forms with log poles}
\(\Omega_X(\log \Delta_X)\). In the case where \(\lsps{X}\) is snc, this sheaf
and its properties are described in \cite[\S 2]{MR1193913}. For a definition and
treatment of \(\Omega_X(\log \Delta_X)\) in the much greater generality of
logarithmic schemes we refer to \cite[\S IV]{MR3838359}. 

In some of the calculations below the following concrete local description will
be very useful. Let \(z_{1}, z_{2}, \dots, z_{n}\) be local coordinates at a
point \(x \in X\) such that \(\supp \Delta_{X} = V(z_{1} z_{2} \cdots z_{r})\)
in a neighborhood of \(x\). Recall that as \(X\) is smooth the differentials \(d
\, z_{1}, d \, z_{2}, \dots, d \, z_{n}\) freely generate \(\Omega_{X}\) on a
neighborhood of \(x\). 
\begin{lemma}[{see e.g. \cite[\S 2]{MR1193913}}]
  \label{lem:loc-descr-diff-log-poles}
  The sections
  \( \frac{d \, z_{1}}{z_{1}}, \dots, \frac{d \, z_{r}}{z_{r}}, d \,
    z_{r+1}, \dots, d \, z_{n} \) freely generate
  \(\Omega_{X}(\log \Delta_{X})\) on a neighborhood of \(x\).
\end{lemma}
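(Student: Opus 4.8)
The plan is to argue locally: the statement is Zariski-local at \(x\), so I work in an affine neighborhood on which \(\supp\Delta_X = V(z_1\cdots z_r)\) and \(dz_1,\dots,dz_n\) form an \(\sO_X\)-basis of \(\Omega_X\). I adopt Deligne's definition of \(\Omega_X(\log\Delta_X)\) as the subsheaf of \(\iota_{X*}\Omega_{U_X}\) of meromorphic \(1\)-forms \(\omega\) such that both \(\omega\) and \(d\omega\) have poles of order at most \(1\) along every component of \(\Delta_X\) (if instead one takes as the definition the \(\sO_X\)-submodule generated by \(\Omega_X\) and the \(\tfrac{dz_i}{z_i}\), only the independence step below is needed). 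First I would record the two easy points. The listed sections lie in \(\Omega_X(\log\Delta_X)\): each \(dz_k\) with \(k>r\) is regular, while \(\tfrac{dz_i}{z_i}=d\log z_i\) has a simple pole and satisfies \(d(\tfrac{dz_i}{z_i})=0\), so it and its differential have pole order \(\le 1\). They are \(\sO_X\)-independent because on the dense open \(U_X\) the \(z_i\) (\(i\le r\)) are units, so \(\tfrac{dz_1}{z_1},\dots,\tfrac{dz_r}{z_r},dz_{r+1},\dots,dz_n\) is an \(\sO_{U_X}\)-basis of \(\Omega_{U_X}\); hence the \(\sO_X\)-module \(W\) they generate is free of rank \(n\) and injects into \(\iota_{X*}\Omega_{U_X}\).

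It then remains to prove \(W=\Omega_X(\log\Delta_X)\), i.e.\ that every log form is an \(\sO_X\)-combination of the generators. Given \(\omega\in\Omega_X(\log\Delta_X)\), I write \(\omega=\sum_k a_k\,dz_k\) with \(a_k\) regular on \(U_X\), and set \(b_k:=a_k z_k\) for \(k\le r\) and \(b_k:=a_k\) for \(k>r\), so \(\omega=\sum_{k\le r}b_k\tfrac{dz_k}{z_k}+\sum_{k>r}b_k\,dz_k\). Each \(b_k\) is regular on \(U_X\), so by normality of \(X\) (algebraic Hartogs) it suffices to show \(v_{\eta_i}(b_k)\ge 0\) at every generic point \(\eta_i\) of a component \(V(z_i)\subseteq\supp\Delta_X\); then \(b_k\in\sO_X\) near \(x\) and \(\omega\in W\). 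Fixing such an \(\eta_i\), the other \(z_j\) (\(j\ne i\)) are units in the DVR \(\sO_{X,\eta_i}\), so \(v_{\eta_i}(b_k)=v_{\eta_i}(a_k)\) for \(k\ne i\) and \(v_{\eta_i}(b_i)=v_{\eta_i}(a_i)+1\). Since \(dz_1,\dots,dz_n\) is a basis of \(\Omega_X\) localized at \(\eta_i\), the pole order of \(\omega\) there is \(-\min_k v_{\eta_i}(a_k)\), so the hypothesis gives \(v_{\eta_i}(a_k)\ge -1\) for all \(k\); this already yields \(v_{\eta_i}(b_i)\ge 0\) on the diagonal.

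The crux is to upgrade \(v_{\eta_i}(a_k)\ge -1\) to \(v_{\eta_i}(a_k)\ge 0\) for the off-diagonal indices \(k\ne i\), and this is exactly where the hypothesis on \(d\omega\) is used. The coefficient of \(dz_i\wedge dz_k\) in \(d\omega=\sum_{m,l}\partial_m a_l\,dz_m\wedge dz_l\) is \(\partial_i a_k-\partial_k a_i\), and I would compute its order at \(\eta_i\): if \(a_k\) had a genuine simple pole, \(a_k=u z_i^{-1}+(\text{regular})\) with \(u\) a unit, then \(\partial_i a_k=-u z_i^{-2}+(\text{pole order}\le 1)\), whereas \(\partial_k a_i\) (with \(k\ne i\)) does not differentiate \(z_i^{-1}\) and so has pole order \(\le 1\); hence \(\partial_i a_k-\partial_k a_i\) would have a pole of order \(2\), contradicting \(d\omega\in\Omega_X^2(\log\Delta_X)\). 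This step is valid in every characteristic, since the only differentiation invoked is \(\partial_i z_i^{-1}=-z_i^{-2}\), whose coefficient \(-1\) never vanishes. Thus \(v_{\eta_i}(a_k)\ge 0\) for \(k\ne i\), which finishes the valuation estimate and the proof. The main obstacle is precisely this residue computation—making rigorous the order of a pole of a form at the codimension-one point \(\eta_i\) and tracking how \(\partial_i\) interacts with it; an alternative sidestepping the explicit expansion is to induct on \(r\) via the residue exact sequence \(0\to\Omega_X(\log(\Delta_X-D_i))\to\Omega_X(\log\Delta_X)\xrightarrow{\Res{}{}}\sO_{D_i}\to 0\), trading the computation for the construction of the residue map.
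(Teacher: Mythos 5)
Your proof is correct; note that the paper gives no proof of its own for this lemma (it is quoted from the cited reference, Esnault--Viehweg \S 2), and your argument is essentially the standard one found there: Deligne's two-condition definition (pole order at most one for both \(\omega\) and \(d\omega\)), reduction by normality/algebraic Hartogs to a valuation estimate at the generic points \(\eta_i\) of the components of \(\supp \Delta_X\), and the off-diagonal step where a genuine simple pole of \(a_k\) along \(V(z_i)\), \(k \neq i\), would produce a double pole in the \(dz_i \wedge dz_k\) coefficient \(\partial_i a_k - \partial_k a_i\) of \(d\omega\). You also correctly isolated the one subtlety relevant to this paper's setting of a perfect field of arbitrary characteristic: because the condition on \(\omega\) itself already caps the pole order at one, the only derivative that matters is \(\partial_i(z_i^{-1}) = -z_i^{-2}\), whose coefficient \(-1\) is a unit in every characteristic, so the argument is genuinely characteristic-free.
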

Given \(\Omega_X(\log \Delta_X)\), we can form
the exterior powers 
\begin{equation}
  \label{eq:ext-pow-om}
  \Omega_X^p(\log \Delta_X) := \bigwedge^p  \Omega_X(\log \Delta_X),
\end{equation}
and combining \cref{lem:loc-descr-diff-log-poles} with \eqref{eq:ext-pow-om}
gives concrete local descriptions of the \(\Omega_X^p(\log \Delta_X)\); in
particular, we see that \(\Omega_X^{\dim X}(\log \Delta_X) = \omega_X(\Delta_X)\).
\begin{definition}
  \label{def:1}
  The \textbf{log-Hodge cohomology with supports} of a log-smooth pair with
  supports \(\lsps{X}\) is defined by
  \begin{equation}
    \label{eq:99}
    H^{d} \lsps{X} = \bigoplus_{p+q = d} H_{\Phi}^{q}(X,
    \Omega_{X}^{p}(\log \Delta_{X}))  
  \end{equation}
  Here \(H_{\Phi}^{q }\) denotes local cohomology with
  respect to the family of supports \(\Phi_{X}\).  For connected
  \(X\), we \emph{define}
  \(H_{d}\lsps{X} : = H^{2 \dim X - d} \lsps{X}\),
  and in general we set
  \(H_{d} \lsps{X} = \bigoplus_{i} H_{d}\lsps{X_{i}}
  \) where \(X_{i}\) are the connected components of \(X\).
\end{definition}

Let \(f: \lsps{X} \to \lsps{Y}\) be pulling morphism of snc pairs with supports.

\begin{lemma}[{\cite[Prop. 2.3.1]{MR3838359} + \eqref{eq:ext-pow-om}}]
  \label{lem:morph-cxs-shvs-diff}
    The map \(f\) induces a morphism of  sheaves 
  \begin{equation}
    \label{eq:90}
    \begin{split}
      &f^{*} \Omega_{Y}^{p}(\log \Delta_{Y}) \xrightarrow{d \,
        f^{\vee}} \Omega_{X}^{p}(\log \Delta_{X}) \text{
        adjoint to a morphism  } \\
      &f^* \Omega_{Y}^{p} (\log \Delta_{Y}) \xrightarrow{d f^{\vee}}
      \Omega_{X}^{p}(\log \Delta_{X})  \text{ for all p.}
    \end{split}
  \end{equation}
\end{lemma}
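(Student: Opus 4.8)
The plan is to build the map in degree \(p=1\) and then obtain all \(p\) by applying \(\bigwedge^{p}\) and invoking \eqref{eq:ext-pow-om}. Since pullback of quasicoherent sheaves commutes with exterior powers, a single morphism \(\psi\colon f^{*}\Omega_{Y}(\log\Delta_{Y})\to\Omega_{X}(\log\Delta_{X})\) yields \(\bigwedge^{p}\psi\colon f^{*}\Omega_{Y}^{p}(\log\Delta_{Y})=\bigwedge^{p}f^{*}\Omega_{Y}(\log\Delta_{Y})\to\bigwedge^{p}\Omega_{X}(\log\Delta_{X})=\Omega_{X}^{p}(\log\Delta_{X})\), which is the arrow of \eqref{eq:90}. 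To construct \(\psi\), I would first observe that there is nothing to prove on the interior: restricting to \(U_{X}\), where \(\Omega_{X}(\log\Delta_{X})|_{U_{X}}=\Omega_{U_{X}}\) (and likewise over \(Y\)), the pulling hypothesis \(f(U_{X})\subseteq U_{Y}\) supplies the ordinary cotangent pullback \(f|_{U_{X}}^{*}\Omega_{U_{Y}}\to\Omega_{U_{X}}\). As \(\Omega_{X}(\log\Delta_{X})\) is locally free by \cref{lem:loc-descr-diff-log-poles}, hence torsion-free, and \(U_{X}\subseteq X\) is dense open, any extension of this generic morphism across \(\Delta_{X}\) is unique; so the whole problem reduces to checking locally that the generic pullback does extend.

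The one real point is this local extension. Fix \(x\in X\) with coordinates \(z_{1},\dots,z_{n}\) as in \cref{lem:loc-descr-diff-log-poles} and \(\supp\Delta_{X}=V(z_{1}\cdots z_{r})\), and coordinates \(w_{1},\dots,w_{m}\) near \(f(x)\) with \(\supp\Delta_{Y}=V(w_{1}\cdots w_{s})\), so that \(f^{*}\Omega_{Y}(\log\Delta_{Y})\) is generated by the pullbacks of \(\tfrac{dw_{1}}{w_{1}},\dots,\tfrac{dw_{s}}{w_{s}},dw_{s+1},\dots,dw_{m}\). For \(i>s\) the pullback \(f^{*}dw_{i}=d(f^{*}w_{i})\) is a regular form, hence a section of \(\Omega_{X}\subseteq\Omega_{X}(\log\Delta_{X})\). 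For \(i\le s\) the key observation is that \(w_{i}\) is invertible on \(U_{Y}\), so by the pulling condition \(f^{*}w_{i}\) is invertible on \(U_{X}\) and its divisor is effective and supported on \(\supp\Delta_{X}\); since \(\sO_{X,x}\) is a regular local ring this forces a monomial factorization \(f^{*}w_{i}=u_{i}\,z_{1}^{a_{i1}}\cdots z_{r}^{a_{ir}}\) with \(u_{i}\) a unit and \(a_{ij}\in\ZZ_{\ge 0}\). Then
\[ f^{*}\frac{dw_{i}}{w_{i}}=\frac{d(f^{*}w_{i})}{f^{*}w_{i}}=\frac{du_{i}}{u_{i}}+\sum_{j=1}^{r}a_{ij}\,\frac{dz_{j}}{z_{j}}, \]
where \(du_{i}/u_{i}\) is regular because \(u_{i}\) is a unit, and the remaining terms are \(\sO_{X}\)-multiples of the generators \(dz_{j}/z_{j}\) of \(\Omega_{X}(\log\Delta_{X})\). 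Hence the pullback lands in \(\Omega_{X}(\log\Delta_{X})\). I expect this monomial factorization to be the main obstacle: it is exactly where both the pulling hypothesis \(f^{-1}(\supp\Delta_{Y})\subseteq\supp\Delta_{X}\) and the smoothness of \(X\) are used.

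Finally, the uniqueness noted above guarantees that these coordinate-wise morphisms agree on overlaps and glue to a global \(\psi\); applying \(\bigwedge^{p}\) gives the morphism of \eqref{eq:90}, and its image under the \((f^{*},f_{*})\)-adjunction is the companion map \(\Omega_{Y}^{p}(\log\Delta_{Y})\to f_{*}\Omega_{X}^{p}(\log\Delta_{X})\). Alternatively one can bypass the computation: a pulling morphism of snc pairs induces a morphism of the associated log schemes with their divisorial log structures — the inclusion \(f^{-1}(\supp\Delta_{Y})\subseteq\supp\Delta_{X}\) being precisely what lets the log structure of \(Y\) pull back into that of \(X\) — after which the functoriality of logarithmic differentials in \cite[Prop.\ 2.3.1]{MR3838359}, together with \eqref{eq:ext-pow-om}, produces the same maps.
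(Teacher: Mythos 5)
Your proof is correct, but its main argument takes a genuinely different route from the paper's. The paper disposes of \cref{lem:morph-cxs-shvs-diff} essentially by citation: a pulling morphism of snc pairs induces a morphism of the associated log schemes with their divisorial log structures, so the functoriality of log differentials in \cite[Prop.\ 2.3.1]{MR3838359} gives the map for \(p = 1\), and \eqref{eq:ext-pow-om} then gives all \(p\) --- this is precisely the alternative you sketch in your closing paragraph. Your main argument instead verifies everything by hand: reduce to \(p = 1\), observe that the ordinary cotangent pullback handles the interior, and extend across \(\Delta_X\) via the factorization \(f^* w_i = u_i\, z_1^{a_{i1}} \cdots z_r^{a_{ir}}\) (valid because \(\sO_{X,x}\) is a regular local ring, hence a UFD, and \(V(f^* w_i) \subseteq f^{-1}(\supp \Delta_Y) \subseteq \supp \Delta_X\), with \(f^* w_i \neq 0\) thanks to \(f(U_X) \subseteq U_Y\)) together with the logarithmic-derivative identity \(f^*(dw_i/w_i) = du_i/u_i + \sum_j a_{ij}\, dz_j/z_j\); uniqueness of extensions into the locally free (hence torsion-free) sheaf \(\Omega_X(\log \Delta_X)\) over the dense open \(U_X\) then handles both well-definedness and gluing. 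All of these steps check out. What your computation buys is a self-contained proof that makes explicit the remark the paper records right after the lemma --- pulled-back log forms acquire only first-order poles along \(\Delta_X\), with nonnegative integer ``residues'' \(a_{ij}\) --- and it isolates exactly where the pulling hypothesis and the smoothness (factoriality of local rings) of \(X\) enter. What the paper's citation buys is brevity and generality: Ogus's functoriality applies to arbitrary morphisms of fine log schemes, not just snc pairs, and that extra generality is genuinely used elsewhere in the paper, e.g.\ when pushforwards are constructed through the possibly singular compactification \(\overline{X}\), where no coordinate argument of your kind is available.
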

The essential content of this lemma is that when we pull back a log
differential form \(\sigma \) on \((Y, \Delta_{Y})\), it doesn't
\emph{develop} poles of order \(\geq 1\) along \(\Delta_{X}\). Combining the previous lemma with proposition \ref{prp:coho-with-supp} gives:

\begin{proposition}[{\cite[\S
9.1-2]{bindaTriangulatedCategoriesLogarithmic2020}, see also \cite[\S 2.1]{MR2923726}}]
  \label{prop:pullb-pushf-morph}
  For every pulling morphism \(f: \lsps{X} \to \lsps{Y}\)  there are functorial
  morphisms
  \begin{equation}
    \label{eq:54}
    R \underline{\Gamma}_{\Phi} \ologd{Y}{p} \to R f_{*}
    R \underline{\Gamma}_{\Phi} \ologd{Y}{p} \text{  for all p  }
  \end{equation}
  In particular, for each \(p, q\) there are functorial homomorphisms
  \begin{equation}
    \label{eq:195}
    f^{*} : H_{\Phi}^{q}(Y, \ologd{Y}{p}) \to H_{\Phi}^{q}(X, \ologd{X}{p})
  \end{equation}
  and hence (summing over \(p +  q = d\)) functorial homomorphisms
  \begin{equation}
    \label{eq:196}
    f^{*} : H^{d}\lsps{X} \to H^{d} \lsps{Y}
  \end{equation}
\end{proposition}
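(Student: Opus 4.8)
The plan is to obtain \eqref{eq:54} by feeding the sheaf-level map of \cref{lem:morph-cxs-shvs-diff} into \cref{prp:coho-with-supp}\ref{item:coho-with-supp1}; everything else is a formal consequence. First I would record that a pulling morphism \(f : \lsps{X} \to \lsps{Y}\) of snc pairs with supports is, by \cref{def:push-pull}, in particular a pulling morphism of the underlying schemes with supports, so that \(f^{-1}(\Phi_Y) \subseteq \Phi_X\) and \cref{prp:coho-with-supp}\ref{item:coho-with-supp1} becomes available once a morphism of the form \eqref{eq:55} is produced.

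To produce that morphism I would set \(\mathscr{G} = \ologd{Y}{p}\) and \(\mathscr{F} = \ologd{X}{p}\) and take the adjoint \(\ologd{Y}{p} \to f_* \ologd{X}{p}\) of the pullback \(df^\vee : f^* \ologd{Y}{p} \to \ologd{X}{p}\) supplied by \cref{lem:morph-cxs-shvs-diff}; this is exactly the datum \eqref{eq:55}. Since the sheaves \(\ologd{\bullet}{p}\) are locally free of finite rank by \cref{lem:loc-descr-diff-log-poles}, hence quasicoherent, I would invoke the quasicoherent version of \cref{prp:coho-with-supp}\ref{item:coho-with-supp1}, which returns the natural morphism \(R\underline{\Gamma}_\Phi \ologd{Y}{p} \to Rf_* R\underline{\Gamma}_\Phi \ologd{X}{p}\) of \eqref{eq:54}. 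Applying \(R\Gamma(Y,-)\) and using the identifications \(R\Gamma(Y,-)\circ Rf_* = R\Gamma(X,-)\) and \(R\Gamma(Y, R\underline{\Gamma}_\Phi(-)) = R\Gamma_\Phi(Y,-)\) turns \eqref{eq:54} into a map \(R\Gamma_\Phi(Y, \ologd{Y}{p}) \to R\Gamma_\Phi(X, \ologd{X}{p})\); taking \(H^q\) yields the pullback \eqref{eq:195}, and summing over \(p+q = d\) gives the induced map on total cohomology \eqref{eq:196}.

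The hard part will be functoriality, i.e. verifying \((g\circ f)^* = f^* \circ g^*\) for composable pulling morphisms \(\lsps{X} \xrightarrow{f} \lsps{Y} \xrightarrow{g} \lsps{Z}\); this is the only point requiring genuine bookkeeping, and it decomposes into two compatibilities that must be matched up. The first is functoriality of the form-pullbacks themselves, \(d(gf)^\vee = df^\vee \circ f^*(dg^\vee)\) after the appropriate identifications, which is the chain rule for pullback of log differentials and can be checked directly in the local coordinates of \cref{lem:loc-descr-diff-log-poles}. The second is that the natural morphism manufactured by \cref{prp:coho-with-supp}\ref{item:coho-with-supp1} respects composition; establishing this requires unwinding the construction underlying that proposition (ultimately \cite[\href{https://stacks.math.columbia.edu/tag/0G78}{Tag 0G78}]{stacks-project}) and checking that the relevant adjunction units and support functors compose compatibly. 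Neither step involves a hard new idea, but carefully threading the naturality through the adjunction and the two derived functors is where the actual work of the proof lies.
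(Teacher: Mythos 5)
Your proposal is correct and takes essentially the same approach as the paper: the paper's entire proof is the one-line observation that \cref{lem:morph-cxs-shvs-diff} (in its adjoint form \(\ologd{Y}{p} \to f_*\ologd{X}{p}\)) feeds into \cref{prp:coho-with-supp}\ref{item:coho-with-supp1}, exactly as you do, with functoriality deferred to the cited references \cite[\S 9.1-2]{bindaTriangulatedCategoriesLogarithmic2020} and \cite[\S 2.1]{MR2923726}. Incidentally, your write-up silently corrects two typos in the paper's statement --- the right-hand side of \eqref{eq:54} should read \(\ologd{X}{p}\), and the arrow in \eqref{eq:196} should go \(H^{d}\lsps{Y} \to H^{d}\lsps{X}\) --- and your construction produces the correct versions.
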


The maps \(f_{*}: H_{d}\lsps{X} \to H_{d}\lsps{Y}\) induced by a pushing
morphism \(f: \lsps{X} \to \lsps{Y}\) can be obtained from a combination of
Nagata compactification and Grothendieck duality.

\begin{lemma}[{\cite[\S 9.5]{bindaTriangulatedCategoriesLogarithmic2020}, see
also \cite[\S 2.3]{MR2923726}}]
  \label{lem:pushforward}
  Let \(f: \lsps{X} \to \lsps{Y}\) be a pushing morphism of equidimensional
  log-smooth pairs with support such that. Then letting \(c = \dim Y - \dim X\),
  for each \(p\) there are functorial morphisms of complexes of coherent sheaves
  \begin{equation}
    \label{eq:32}
    R f_{*} R \underline{\Gamma}_{\Phi_X} (\Omega_{X}^{p}(\log \Delta_{X})) \to R \underline{\Gamma}_{\Phi_Y}\Omega_{Y}^{p+c}(\log    \Delta_{Y})[c] 
  \end{equation}
  inducing maps on cohomology
  \begin{equation}
    \label{eq:33}
    f_{*}: H_{\Phi_X}^{q}(X, \Omega_{X}^{p}(\log \Delta_{X})) \to H_{\Phi_Y}^{q+c}(Y, \Omega_{Y}^{p +      c}(\log \Delta_{Y})) 
  \end{equation}
  for all \(q\). 
\end{lemma}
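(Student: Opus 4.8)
The plan is to build the trace morphism \eqref{eq:32} from two ingredients: Grothendieck--Serre duality, which supplies the trace when \(f\) is \emph{proper}, and Nagata compactification, which reduces the general proper-on-supports case to the proper one; the family of supports enters only at the end, through \Cref{prp:coho-with-supp}\ref{item:coho-with-supp2}. First I would record the self-duality of the log-de Rham sheaves. By \Cref{lem:loc-descr-diff-log-poles} the sheaf \(\Omega_X(\log\Delta_X)\) is locally free of rank \(\dim X\), so the wedge product \(\Omega_X^p(\log\Delta_X)\otimes\Omega_X^{d_X-p}(\log\Delta_X)\to\Omega_X^{d_X}(\log\Delta_X)=\omega_X(\Delta_X)\) is a perfect pairing of vector bundles, giving \(\Omega_X^p(\log\Delta_X)\cong\RsHom_X(\Omega_X^{d_X-p}(\log\Delta_X),\omega_X(\Delta_X))\), and similarly on \(Y\). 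Note \(d_Y-(p+c)=d_X-p\), so the complementary degree to \(p+c\) on \(Y\) matches the complementary degree to \(p\) on \(X\).

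Next, for proper \(f\), I would produce the trace in \(\D(Y)\). The defining condition \(f^*\Delta_Y=\Delta_X\) of a pushing morphism gives \(\strshf{X}(\Delta_X)=f^*\strshf{Y}(\Delta_Y)\), whence, using \(f^!(-)\simeq Lf^*(-)\otimes\omega_{X/Y}[-c]\) for a map of smooth varieties, \(f^!\omega_Y(\Delta_Y)\simeq Lf^*\omega_Y(\Delta_Y)\otimes\omega_{X/Y}[-c]\simeq\omega_X(\Delta_X)[-c]\). Substituting \(\omega_X(\Delta_X)=f^!\omega_Y(\Delta_Y)[c]\) into the self-duality, applying \(Rf_*\) together with Grothendieck--Serre duality \(Rf_*\RsHom_X(-,f^!\mathscr{G})\simeq\RsHom_Y(Rf_*(-),\mathscr{G})\), and then applying \(\RsHom_Y(-,\omega_Y(\Delta_Y))[c]\) to the pullback map \(\Omega_Y^{d_X-p}(\log\Delta_Y)\to Rf_*\Omega_X^{d_X-p}(\log\Delta_X)\) of \Cref{lem:morph-cxs-shvs-diff}, I obtain the trace \(Rf_*\Omega_X^p(\log\Delta_X)\to\Omega_Y^{p+c}(\log\Delta_Y)[c]\). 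Functoriality in \(f\) would follow from that of the Grothendieck trace and of pullback.

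With this morphism \(\psi\) in \(\D(Y)\), \Cref{prp:coho-with-supp}\ref{item:coho-with-supp2} --- applicable since \(f(\Phi_X)\subseteq\Phi_Y\) --- upgrades \(\psi\) to the sought morphism \eqref{eq:32}, and \eqref{eq:33} then follows by applying \(R\Gamma(Y,-)\) and taking \(H^q\), using \(H^q(Y,Rf_*R\underline{\Gamma}_{\Phi_X}\mathscr{F})=H^q_{\Phi_X}(X,\mathscr{F})\) and \(H^q(Y,R\underline{\Gamma}_{\Phi_Y}\mathscr{G}[c])=H^{q+c}_{\Phi_Y}(Y,\mathscr{G})\). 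When \(f\) is not proper but only \(f|_{\Phi_X}\) is, I would Nagata-compactify \(f=\bar f\circ j\) with \(j\) an open immersion and \(\bar f\) proper: since each \(C\in\Phi_X\) is proper over \(Y\) it is closed in \(\bar X\) and disjoint from the boundary \(\bar X\setminus X\), so \(R\underline{\Gamma}_{\Phi_X}\) is insensitive to the compactification and \(Rf_*R\underline{\Gamma}_{\Phi_X}(-)\simeq R\bar f_*R\underline{\Gamma}_{\Phi_X}(-)\), reducing the construction to the proper morphism \(\bar f\).

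I expect the Nagata reduction to be the main obstacle. The compactification \(\bar X\) is in general singular and carries no sheaf of log forms, so one cannot literally run the duality construction above on \(\bar f\); the trace for \(\bar f\) must instead be taken at the level of dualizing complexes, where Grothendieck duality needs no smoothness, and one must then check that cutting down by the boundary-avoiding supports \(\Phi_X\subseteq X\) --- where the smooth, log identifications of the first two steps are valid --- returns exactly \eqref{eq:32}, independently of the chosen compactification and compatibly with composition. This bookkeeping is precisely what is carried out in \cite[\S 2.3]{MR2923726} in the non-logarithmic case and adapted in \cite[\S 9.5]{bindaTriangulatedCategoriesLogarithmic2020}.
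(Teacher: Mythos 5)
Your construction for a \emph{proper} pushing morphism between smooth snc pairs is correct, and it is essentially the paper's construction up to bookkeeping: you place the twist on the target of the wedge pairing (\(\omega_X(\Delta_X)\)) and identify \(f^!\omega_Y(\Delta_Y)\simeq\omega_X(\Delta_X)[-c]\), where the paper twists the source (\(\Omega^p_X(\log\Delta_X)(-\Delta_X)\)) and dualizes into \(\omega_X[\dim X]\); these differ only by a line-bundle twist. Your supports argument, and the observation that every \(C\in\Phi_X\) is closed in \(\bar X\) and disjoint from \(\bar X\setminus X\), are also right. The gap is in the reduction step: Nagata does \emph{not} reduce the general case to the case you proved. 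The compactified morphism \(\bar f\) is proper, but \(\bar X\) is in general singular and \((\bar X,\Delta_{\bar X})\) is not an snc pair, so both ingredients of your proper-case argument --- the formula \(\bar f^{!}(-)\simeq L\bar f^{*}(-)\otimes\omega_{\bar X/Y}[-c]\) and the perfect pairing of locally free log differentials on the source --- are unavailable for \(\bar f\). You flag this and propose ``taking the trace at the level of dualizing complexes,'' but the trace \(R\bar f_*\omega^\bullet_{\bar X}\to\omega^\bullet_Y\) alone cannot produce \eqref{eq:32}: Grothendieck duality must be applied to \emph{some specific object on \(\bar X\)} which simultaneously (i) receives a functorial map from \(\Omega^p_Y(\log\Delta_Y)(-\Delta_Y)\) along \(\bar f\) (this is what gets dualized), and (ii) restricts over \(X\) to \(\Omega^p_X(\log\Delta_X)(-\Delta_X)\), so that excision along the boundary-avoiding supports recovers the left-hand side of \eqref{eq:32}. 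Your proposal never names such an object.

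This is exactly where the paper does its real work, and where ``the bookkeeping of \cite{MR2923726} adapts'' is not automatic. In the non-logarithmic setting the required object is simply the sheaf of K\"ahler differentials \(\Omega^p_{\bar X}\), which exists on an arbitrary scheme, is coherent, pulls back functorially, and restricts to \(\Omega^p_X\) on the smooth open locus; that is why Chatzistamatiou--R\"ulling can run their argument on a singular compactification. There is no equally automatic logarithmic analogue on \(\bar X\), since the snc-pair definition of \(\Omega^p(\log)\) does not apply there. The paper's solution is to equip \(\bar X\) with the \emph{log structure pulled back from \((Y,\Delta_Y)\)} along \(\bar f\), and to take for \(\Omega^p_{\bar X}(\log\Delta_{\bar X})\) Ogus's sheaf of log differentials of that (coherent, possibly non-snc) log scheme; the citations to \cite{MR3838359} supply precisely the three properties needed --- coherence, the functorial morphism \(\Omega^p_Y(\log\Delta_Y)\to\bar f_*\Omega^p_{\bar X}(\log\Delta_{\bar X})\), and the identification \(\Omega^p_{\bar X}(\log\Delta_{\bar X})|_X\simeq\Omega^p_X(\log\Delta_X)\). (A possible alternative repair of your argument would be to apply \cref{thm:GD} with \(\mathscr{F}^\bullet=R\iota_*\bigl(\Omega^p_X(\log\Delta_X)(-\Delta_X)[p]\bigr)\in D^b_{qc}(\bar X)\), dualize the pullback map for \(f\) itself, and then cut down by excision; but that is again a concrete construction one must spell out and check for independence of compactification --- it is not contained in the phrase ``trace at the level of dualizing complexes.'')
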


Since they enter into the calculations below, we give a description of these
pushforward morphisms. Before beginning, a word on duality in our current setup:
since we are working exclusively over \(\Spec k\), we can make use of compatible
normalized dualizing complexes --- namely, if \(\pi: Z \to \Spec k\) is a
separated finite type \(k\)-scheme then \(\pi^! \strshf{\Spec \, k}\) is a dualizing
complex \cite[\href{https://stacks.math.columbia.edu/tag/0E2S}{Tag 0E2S},
\href{https://stacks.math.columbia.edu/tag/0FVU}{Tag 0FVU}]{stacks-project}.
We will make repeated use of the behavior of dualizing with respect to
differentials: as a consequence of \cref{lem:loc-descr-diff-log-poles}, wedge product gives a
perfect pairing 
\begin{equation}
  \label{eq:ppp-pair}
  \Omega^p_X(\log \Delta_X)(-\Delta_X) \otimes \Omega^{\dim X -p}_X(\log
\Delta_X) \to \omega_X 
\end{equation}
(see also \cite[Cor. III.7.13]{MR0463157}) and so \( \Omega^{\dim X -p}_X(\log
\Delta_X) \simeq R\sHom_X (\Omega^p_X(\log \Delta_X)(-\Delta_X), \omega_X )\). Here the derived sheaf Hom \(R\sHom_X\) agrees with the regular sheaf Hom as
\( \Omega^p_X(\log \Delta_X)(-\Delta_X)\) is locally free. On
the other hand, the \emph{dualizing functor} of \(X\) is  \(R\sHom_X
(\Omega^p_X(\log \Delta_X)(-\Delta_X), \omega_X[\dim  X] )\) where
\(\omega_X = \Omega_X^{\dim X}\). An upshot is that Grothendieck duality
calculations involving the sheaves of differential forms become more symmetric
and predictable if we work with the shifted versions \( \Omega^p_X(\log
\Delta_X)(-\Delta_X)[p] \); for example then we have the identity 
\[ \Omega^{\dim X -p}_X(\log \Delta_X)[\dim X -p] \simeq R\sHom_X
(\Omega^p_X(\log \Delta_X)(-\Delta_X)[p], \omega_X[\dim X] ) \]

Now, we need to compactify \(f: X \to Y\). 
\begin{theorem}[{\cite[\S 4 Thm. 2]{MR158892}, \cite[Thm. 4.1]{MR2356346}}]
  \label{thm:nagata}
  Let \(S\) be a quasi-compact quasi-separated scheme and let \(X \to S\) be a
  separated morphism of finite type. Then there is a dense open immersion of
  \(S\)-schemes \(X \inj \overline{X}\) such that \(\overline{X}\) is proper.
\end{theorem}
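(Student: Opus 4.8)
The plan is to follow the classical strategy of Nagata, in the streamlined form due to Deligne and Conrad, reducing the general quasi-compact quasi-separated base to the Noetherian case and then building the compactification by gluing.

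First I would reduce to the case where \(S\) is Noetherian. Since \(S\) is quasi-compact and quasi-separated, absolute Noetherian approximation lets me write \(S\) as a cofiltered limit \(S = \varprojlim S_\lambda\) of schemes of finite type over \(\Spec \ZZ\) with affine transition maps, and the finite-type separated morphism \(X \to S\) then descends to a finite-type separated morphism \(X_\lambda \to S_\lambda\) for some index \(\lambda\). A compactification \(X_\lambda \inj \overline{X_\lambda}\) over the Noetherian base \(S_\lambda\) pulls back along \(S \to S_\lambda\) to a proper \(S\)-scheme containing \(X\) as an open, so it suffices to treat Noetherian \(S\).

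Over Noetherian \(S\) I would reduce to a gluing problem by induction on the size of an open cover. Cover \(X\) by finitely many affine opens \(X_1, \dots, X_n\); each \(X_i\) is affine and of finite type over \(S\), hence factors as a locally closed immersion \(X_i \inj \PP^{m}_S\), and taking the scheme-theoretic closure exhibits an easy compactification \(X_i \inj \overline{X_i}\) by a projective, hence proper, \(S\)-scheme. The content is then the following gluing lemma: if \(X = U \cup V\) is an open cover and both \(U\) and \(V\) admit compactifications over \(S\), then so does \(X\). Granting this, an induction on \(n\) (compactify \(X_1 \cup \dots \cup X_{n-1}\) and \(X_n\), then glue) produces a compactification of \(X\).

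The hard part is the gluing lemma, and this is where essentially all of the work of the theorem is concentrated; I expect it to be the main obstacle. Given schematically dense open immersions \(U \inj \overline{U}\) and \(V \inj \overline{V}\) into proper \(S\)-schemes, set \(W = U \cap V\) and let \(T\) be the scheme-theoretic closure of \(W\) in \(\overline{U} \times_S \overline{V}\) under the map \(W \to \overline{U} \times_S \overline{V}\) induced by the two inclusions. The two projections \(T \to \overline{U}\) and \(T \to \overline{V}\) are proper and restrict to isomorphisms over the common open \(W\); the strategy is to modify \(\overline{U}\) and \(\overline{V}\) by \(W\)-admissible blow-ups (centered away from \(U\) and \(V\) respectively) so that these projections become open immersions over the relevant loci, and then to glue the resulting proper schemes along the common open copy of \(T\). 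Verifying that this gluing is a scheme, that it is proper over \(S\) via the valuative criterion using properness of \(\overline{U}\) and \(\overline{V}\), and that \(X = U \cup V\) sits inside it as an open immersion is the crux, and the blow-up bookkeeping needed to turn the projections from \(T\) into open immersions is the genuinely delicate point. Throughout, density can be arranged by always replacing an intermediate compactification with the scheme-theoretic closure of the relevant open, so that the final \(X \inj \overline{X}\) is a dense open immersion into a proper \(S\)-scheme, as required.
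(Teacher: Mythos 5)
This statement is not proved in the paper at all: it is Nagata's compactification theorem, quoted as a black box with citations to Nagata's original article and to Conrad's \emph{Deligne's notes on Nagata compactifications}, and then used only to produce the diagram \eqref{eq:nagata}. So the relevant comparison is not with a proof in the paper but with the literature the paper cites, and your proposal is essentially a summary of that literature: Noetherian approximation to reduce to a Noetherian base, induction on a finite affine cover, and the gluing of two compactifications via the scheme-theoretic closure of \(U \cap V\) in \(\overline{U} \times_S \overline{V}\) followed by admissible blow-ups. That is indeed the Deligne--Conrad (and Stacks Project) strategy.

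However, as a proof the proposal has a genuine gap, and you name it yourself: the gluing lemma is asserted, not proved, and it is where the entire mathematical content of the theorem lives. Making the projections \(T \to \overline{U}\) and \(T \to \overline{V}\) usable requires the theory of \(W\)-admissible blow-ups (blow-ups with center disjoint from \(W = U \cap V\)), the domination and extension results for such blow-ups, and the precise statement that after suitable \(W\)-admissible blow-ups of \(\overline{U}\) and \(\overline{V}\) one can glue along opens so that the result is a scheme, is proper over \(S\), and contains \(U \cup V\) as a dense open; this occupies the bulk of Conrad's paper and cannot be compressed into ``blow-up bookkeeping.'' Until that lemma is established, nothing has been proved. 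A second, smaller inaccuracy: an affine scheme \(X_i\) of finite type over a qcqs (possibly non-separated) \(S\) need not admit a locally closed immersion into \(\PP^m_S\); one should first refine the cover so that each \(X_i\) maps into an affine open \(V \subseteq S\), embed \(X_i\) as a closed subscheme of \(\AA^m_V \subseteq \PP^m_V \subseteq \PP^m_S\), and then take the scheme-theoretic closure in \(\PP^m_S\), which is proper over \(S\). With that refinement the base case of your induction is correct.
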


Using \cref{thm:nagata} we obtain morphisms of schemes 
\begin{equation}
  \label{eq:nagata}
  \begin{tikzcd}
    X \arrow[r, hook, "\iota"] \arrow[dr, "f"] & \bar{X} \arrow[d, "\bar{f}"] \\
    & Y 
  \end{tikzcd}
\end{equation}
where \(\iota: X \to \bar{X}\) is a dense open immersion and  \(\bar{f}: \bar{X} \to Y\) is \emph{proper}. Note
that \(\bar{X}\) need not be smooth over \(k\), and in the absence of
resolutions of singularities\footnote{At the time of this writing, this applies
to the cases \(\mathrm{char} \, k = p > 0\) and \(\dim X > 3\).} there is not even a way
to make \(\bar{X}\) smooth. This means we cannot hope to upgrade \(\bar{X}\) to
a simple normal crossing pair \(\lsp{\bar{X}}\). However, we do still have a
divisor \(\Delta_{\bar{X}} := \bar{f}^* \Delta_y\) on \(\bar{X}\). One way to
overcome these difficulties is to equip the possibly singular \(\bar{X}
\) with a \emph{logarithmic structure}, in some sense associated to \(\Delta_{\bar{X}}\), whose restriction to \(X\) coincides with
a logarithmic structure naturally defined by the simple normal crossing divisor
\(\Delta_X \). 

Formally, we use the log structure on \(\bar{X}\) pulled back from the
log structure on \(\lsp{Y}\) \cite[\S III.1.6-7]{MR3838359} along
the morphism \(\bar{f}: \bar{X} \to Y\). Since \((Y, \Delta_Y = \sum_{i=1}^N D^Y_i) \) is a simple normal
crossing pair, its associated log structure is Deligne-Faltings \cite[\S
III.1.7]{MR3838359} and can be encoded in the sequence of inclusions of ideal
sheaves \(\strshf{Y}(- D^Y_i) \inj \strshf{Y}\). The pullback log structure on
\(\bar{X}\) can then be encoded in the sequence of inclusions of ideal
sheaves \[\bar{f}^{-1}\strshf{Y}(- D^Y_i)\cdot \strshf{\bar{X}} =
\strshf{\bar{X}}(-\bar{f}^* D^Y_i) \inj \strshf{\bar{X}}.\]


The pushforward morphisms of \cref{lem:pushforward} are defined using the
sheaves of log differential \(p\)-forms on \(\bar{X}\) over \(k\) as described
in \cite[\S IV.1, V.2]{MR3838359} --- these will be denoted\footnote{This is an
abuse of notation since the construction of this sheaf is (as far as we know)
not the same as the one for simple normal crossing pairs described above
\cref{lem:loc-descr-diff-log-poles}, however the notation of \cite{MR3838359}
seems unsatisfactory for our purposes as we wish to stress that
these are not the ordinary differential forms \(\Omega^p_{\overline{X}}\),} by
\(\Omega^p_{\overline{X}}(\log \Delta_{\overline{X}})\). The
essential properties that we need are:
\begin{itemize}
  \item \(\Omega^p_{\overline{X}}(\log \Delta_{\overline{X}})\) is a
  coherent sheaf on \(\overline{X}\) together with a functorial
  morphism \[\Omega^p_Y(\log \Delta_Y) \to \overline{f}_*
  \Omega^p_{\overline{X}}(\log \Delta_{\overline{X}}).\]
  Coherence can be obtained as follows: first, the log structure on \(\lsp{Y}\)
  is coherent (\cite[\S III.1.9]{MR3838359}), and hence so is its pullback to
  \(\bar{X}\) (see for example \cite[Def. III.1.1.5, Rmk III.1.1.6]{MR3838359}).
  Then \cite[Cor. IV.1.2.8]{MR3838359} implies \(\Omega^1_{\overline{X}}(\log
  \Delta_{\overline{X}})\) is a coherent sheaf, and it follows that its \(p\)-th
  exterior powers are coherent sheaves as well. The desired functorial morphism
  can be obtained from \cite[Prop. IV.1.2.15]{MR3838359}.
  \item There is a natural isomorphism \(\Omega^p_{\overline{X}}(\log
  \Delta_{\overline{X}})|_X \simeq \Omega^p_{X }(\Delta_X)\). This can be seen
  by observing that the log structures on \(\lsp{X} \) and \(\bar{X}\) are
  obtained as pullbacks of the log structure on \(\lsp{Y}\) with respect to
  \(f\) and \(\bar{f} \) respectively (in the case of \(\lsp{X}\) this follows
  from \cref{def:push-pull}, and in the latter case  it is how we defined the
  log structure on  \(\bar{X}\)). Hence considering \cref{eq:nagata} we find
  that the log structure on \(\bar{X}\) restricts to that on \(\lsp{X}\).
\end{itemize}
Hence in particular \(\Omega^p_{\overline{X}}(\log \Delta_{\overline{X}})\) is a
\emph{functorial coherent extension} of \(\Omega^p_{X }(\Delta_X)\) to the
possibly non-snc log scheme \(\bar{X}\). Starting with the
log differential
\[ d \pr_Y^\vee: \Omega^p_Y(\log \Delta_Y)[p] \to R\overline{f}_*
\Omega^p_{\overline{X}}(\log \Delta_{\overline{X}})[p], \] twisting by
\(-\Delta_Y\) and using the projection formula gives a morphism  (\emph{note}:
this is where we use the assumptions that \(f^* \Delta_Y = \Delta_{X}\) and \(\bar{f}^* \Delta_Y = \Delta_{\bar{X}}\))
\begin{equation}
  \label{eq:twisted-diff}
  \Omega^p_Y(\log \Delta_Y)(-\Delta_Y)[p] \to R\overline{f}_*
\Omega^p_{\overline{X}}(\log \Delta_{\overline{X}})(-
\Delta_{\overline{X}})[p]
\end{equation} to which we apply Grothendieck duality:
\begin{theorem}[{Grothendieck duality, \cite[Cor. VII.3.4]{MR0222093},
  \cite[Thm. 3.4.4]{MR1804902}}]
  \label{thm:GD}
  Let \( f : X \to Y \) be a proper morphism of finite-dimensional noetherian
  schemes and assume \(Y\) admits a dualizing complex (for example \(X \) and
  \(Y \) could be schemes of finite type over \(k \)). Then for any pair of objects \(
  \mathscr{F}^\bullet \in D_{qc}^-(X)  \) and \(\sG^\bullet \in D_c^+(Y)\) 
  there is a natural isomorphism
  \[ Rf_* R \underline{Hom}_X(\mathscr{F}^\bullet, f^{!}\sG^\bullet) \simeq
    R\underline{Hom}_Y(Rf_* \mathscr{F}^\bullet, \sG^\bullet) \text{ in }
    D_c^b(Y) \]
\end{theorem}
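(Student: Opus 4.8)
The statement is the local-to-global (``sheafified'') form of Grothendieck--Serre duality, so in a self-contained development it is not really proved in isolation but falls out of the construction of the twisted inverse image functor $f^{!}$ together with its trace. The plan is therefore to first construct (or take as given) the exceptional pullback $f^{!}: D^{+}_{c}(Y) \to D^{+}_{c}(X)$ and the trace morphism $\Tr_{f}: Rf_{*}f^{!}\sG^{\bullet} \to \sG^{\bullet}$ attached to a proper $f$. Using the canonical comparison $Rf_{*}\RsHom_{X}(A,B) \to \RsHom_{Y}(Rf_{*}A, Rf_{*}B)$ and postcomposing with $\Tr_{f}$, I would build the natural candidate map
$$Rf_{*}\RsHom_{X}(\mathscr{F}^{\bullet}, f^{!}\sG^{\bullet}) \to \RsHom_{Y}(Rf_{*}\mathscr{F}^{\bullet}, Rf_{*}f^{!}\sG^{\bullet}) \xrightarrow{\Tr_{f}} \RsHom_{Y}(Rf_{*}\mathscr{F}^{\bullet}, \sG^{\bullet}).$$
The entire content of the theorem is the claim that this candidate is an isomorphism.

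To prove it is an isomorphism I would argue by dévissage. The assertion is local on $Y$, so I may assume $Y$ affine; applying $R\Gamma(Y,-)$ and the identity $R\Gamma \circ \RsHom = R\Hom$ reduces the sheafy statement to the global adjunction $R\Hom_{X}(\mathscr{F}^\bullet, f^{!}\sG^\bullet) \simeq R\Hom_{Y}(Rf_{*}\mathscr{F}^\bullet, \sG^\bullet)$, i.e.\ to $Rf_{*}$ being left adjoint to $f^{!}$. Next, Hartshorne's ``way-out'' lemmas let me reduce the coefficient complexes to the only cases that need checking: taking $\sG^{\bullet}$ bounded and $\mathscr{F}^{\bullet}$ a single coherent (indeed locally free) sheaf, using that both sides are way-out in the appropriate direction and that the hypotheses $\mathscr{F}^{\bullet}\in D^{-}_{qc}(X)$, $\sG^{\bullet}\in D^{+}_{c}(Y)$ supply exactly the finiteness one needs to make this legitimate.

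With the coefficients tamed, I would reduce the morphism $f$ itself to two model cases. Nagata compactification (\cref{thm:nagata}) and Chow's lemma show that $f$ is dominated by a projective morphism, and every projective morphism factors as a closed immersion into some $\PP^{n}_{Y}$ followed by the projection $\PP^{n}_{Y}\to Y$. Because $f^{!}$ is pseudofunctorial---$(g\circ h)^{!}\simeq h^{!}g^{!}$ with compatibly composing traces---the duality isomorphism for a composite follows from that for the factors, so it suffices to treat (i) a finite morphism (in particular a closed immersion) and (ii) the projection from projective space. Case (i) is handled by the explicit formula $f^{!}\sG^{\bullet} = \RsHom_{\strshf{Y}}(f_{*}\strshf{X}, \sG^{\bullet})$ and the elementary sheaf-$\Hom$ adjunction for a finite morphism; case (ii) is the classical Serre-duality computation on $\PP^{n}$, where $\omega_{\PP^{n}/Y}\simeq \strshf{}(-n-1)[n]$ and the residue pins down $\Tr_{f}$, and the isomorphism is verified on the twisting sheaves $\strshf{}(m)$ using the known cohomology of projective space.

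I expect the genuine obstacle to lie not in this final dévissage but in the construction underlying it: producing $f^{!}$ and its trace globally, functorially in $f$, and with the pseudofunctoriality $(g\circ h)^{!}\simeq h^{!}g^{!}$ and compatibly composing traces---and in particular extending the construction from projective to general proper morphisms---is the technical heart of the theory. This is carried out via residual complexes in \cite{MR0222093} and via Brown representability and the adjoint functor theorem in the more modern treatment \cite{MR1804902}. Once that machinery is in hand, the verification sketched above is comparatively mechanical, the only hands-on input being the residue calculation on $\PP^{n}$; this is precisely why, in the body of the paper, we invoke the theorem as a black box from \cite{MR0222093, MR1804902} rather than reproving it.
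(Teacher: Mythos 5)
The paper never proves this statement: \cref{thm:GD} is imported as a black box, with citations to \cite{MR0222093} and \cite{MR1804902}, which is exactly the conclusion your final paragraph reaches, so your proposal is consistent with the paper's treatment, and your sketch (duality map = comparison morphism followed by the trace, way-out reduction in the coefficients, factorization through $\PP^n_Y$) is a broadly faithful outline of how the cited sources argue. Two caveats so the sketch does not mislead. First, since $f$ is assumed proper, the relevant tool for your dévissage is Chow's lemma alone; Nagata compactification (\cref{thm:nagata}) plays no role in proving the proper case --- in the theory it is used to \emph{extend} $f^!$ and duality from proper to separated finite-type morphisms, which is how the paper itself uses it elsewhere. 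Second, and more substantively, your reduction step overreaches: if $f$ is merely \emph{dominated} by a projective morphism $X' \to X \to Y$, pseudofunctoriality of $f^!$ lets you deduce duality for the composite from duality for the factors, which is the wrong direction --- it does not give duality for $f$ itself. The factorization argument (closed immersion into $\PP^n_Y$ followed by the projection) settles the \emph{projective} case; passing from projective to general proper morphisms is a genuine additional descent, carried out by Deligne's pro-object argument in the appendix to \cite{MR0222093} or by the residual-complex machinery of \cite{MR0222093,MR1804902}, or sidestepped entirely in the Neeman--Lipman approach where $f^!$ is \emph{defined} as the Brown-representability right adjoint of $Rf_*$, making the global adjunction tautological and leaving only the sheafified statement to verify. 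You gesture at this gap (``the technical heart''), but as written the dévissage itself would not close for proper non-projective $f$.
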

Combining \cref{thm:GD} with \cref{eq:twisted-diff} gives a morphism 
\begin{equation}
  \small
  \begin{tikzcd}
    R\overline{f}_* R\sHom_{\overline{X} }(\Omega^p_{\overline{X} }(\log
  \Delta_{\overline{X}})(-\Delta_{\overline{X}})[p], \omega_{\overline{X}}^\bullet)  = R\sHom_Y
  (R\overline{f}_*\Omega^p_{\overline{X} }(\log \Delta_{\overline{X}})(-\Delta_{\overline{X}})[p] , \omega_Y[\dim Y] ) \arrow[d] \\
   R\sHom_Y
  (\Omega^p_Y(\log \Delta_Y)(-\Delta_Y)[p] , \omega_Y[\dim Y] ) 
  \end{tikzcd}
\end{equation}
where the equality is \cref{thm:GD} and the vertical map is induced by \eqref{eq:twisted-diff}.
Adding supports gives a morphism 
\begin{equation}
  \label{eq:pre-pushforward}
  {\footnotesize
  \begin{tikzcd}
    Rf_* R\underline{\Gamma}_{\Phi_X} R\sHom_{X }(\Omega^p_{X}(\log
  \Delta_{X})(-\Delta_{X})[p], \omega_{X}[\dim X]) = R\overline{f}_* R\underline{\Gamma}_{\Phi_X} R\sHom_{\overline{X} }(\Omega^p_{\overline{X} }(\log
  \Delta_{\overline{X}})(-\Delta_{\overline{X}})[p], \omega_{\overline{X}}^\bullet)  \arrow[d] \\
   R\underline{\Gamma}_{\Phi_Y} R\sHom_Y
  (\Omega^p_Y(\log \Delta_Y)(-\Delta_Y)[p] , \omega_Y[\dim Y] ) 
  \end{tikzcd}}
\end{equation}
where the equality is obtained from the \emph{excision} property of local
cohomology, compatibility of the dualizing functor with restriction \emph{and}
the natural isomorphism \(\Omega^p_{\overline{X}}(\log \Delta_{\overline{X}})|_X
\simeq \Omega^p_{X }(\Delta_X)\). 
Using \eqref{eq:ppp-pair} we obtain
\[ \Omega^{\dim X -p}_X(\log \Delta_X) \simeq \sHom_X(\Omega^p_X(\log
\Delta_X)(-\Delta_X),\omega_X ) = R\sHom_{X }(\Omega^p_{X}(\log
\Delta_{X})(-\Delta_{X}), \omega_{X})  \] where the last equality uses the fact
that \(\Omega^p_X(\log \Delta_X)(-\Delta_X)\) is locally free. A similar
calculation on \(Y\) transforms \eqref{eq:pre-pushforward} into:
\[ Rf_* R\underline{\Gamma}_{\Phi_X} \Omega^{\dim X -p}_X(\log \Delta_X)[\dim X
- p] \to  R\underline{\Gamma}_{\Phi_Y} \Omega^{\dim Y - p}_Y(\log \Delta_Y)[\dim Y - p]  \]
and reindexing like \(p \leftrightarrow \dim X - p  \) recovers \cref{lem:pushforward}.

\section{A base change formula}
\label{sec:proj-form}
\begin{lemma}[{compare with \cite[Prop. 2.3.7]{MR2923726}}]
  \label{lem:base-change-formula}
  Let 
  \begin{equation}
    \label{eq:projform-cartdiag}
    \begin{tikzcd}
      \lsps{X'} \arrow[dr, phantom, "\square"] \arrow[r, "g'"] \arrow[d, "f'"] & \lsps{X} \arrow[d, "f"] \\
      \lsps{Y'} \arrow[r, "g"] & \lsps{Y}
    \end{tikzcd}
  \end{equation}
  be a cartesian diagram of equidimensional snc pairs with supports, where \(f,
  f'\) (resp. \(g, g'\)) are pushing (resp. pulling) morphisms and \(g\) is
  either flat or a closed immersion transverse to \(f\). Then 
  \[ g^* f_* = f'_* g'^* : H^* \lsps{X} \to H^*\lsps{Y'}. \]
\end{lemma}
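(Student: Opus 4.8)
The plan is to unwind the construction of $f_*$ from \Cref{lem:pushforward} --- Nagata compactification \eqref{eq:nagata}, the log-differential morphism \eqref{eq:twisted-diff}, Grothendieck duality (\Cref{thm:GD}), and the passage to supports \eqref{eq:pre-pushforward} --- and to check that each of these steps is compatible with the base change induced by $g$. Since $g^* f_*$ and $f'_* g'^*$ are induced by morphisms in the derived category, it suffices to exhibit a commuting diagram of complexes on $Y'$, so everything reduces to compatibility of the defining morphisms with $Lg^*$. The first reduction is to choose the compactifications compatibly: given $X \inj \bar X \xrightarrow{\bar f} Y$ as in \eqref{eq:nagata}, I would take $\bar X' := \bar X \times_Y Y'$ with $\bar f' : \bar X' \to Y'$ the projection, which is proper as a base change of the proper $\bar f$, and $X' = X \times_Y Y' \inj \bar X'$ an open immersion. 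The pullback log structure on $\bar X'$ is then simultaneously the pullback from $Y'$ along $\bar f'$ and from $\bar X$ along $\bar X' \to \bar X$, so that $\ologd{\bar X'}{p}(-\Delta_{\bar X'})$ is the compatible coherent extension of $\ologd{X'}{p}(-\Delta_{X'})$. This yields a cartesian square of proper morphisms to which duality applies, and (using flatness, resp.\ transversality, of $g$) it preserves the relative dimension $c = \dim Y - \dim X = \dim Y' - \dim X'$, so the cohomological degrees match.

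With the compactifications in hand I would establish base change for each ingredient. First, flat or transverse base change for proper pushforward: when $g$ is flat this is the classical isomorphism $g^* R\bar f_* \simeq R\bar f'_* \bar g'^*$, and when $g$ is a closed immersion transverse to $f$ the transversality guarantees Tor-independence of $\bar X$ and $Y'$ over $Y$, which is exactly what is needed for the same isomorphism to hold. Second, compatibility of the log-differential morphism \eqref{eq:twisted-diff}: the pullback maps on log forms of \Cref{lem:morph-cxs-shvs-diff} are functorial, and the two ways of pulling $\ologd{Y}{p}(-\Delta_Y) \to R\bar f_* \ologd{\bar X}{p}(-\Delta_{\bar X})$ around the cartesian square agree because all the log structures are pulled back from $\lsps{Y}$; here the pushing conditions $f^*\Delta_Y = \Delta_X$, $f'^*\Delta_{Y'} = \Delta_{X'}$ together with the cartesian compatibility of the divisors are what make the twists by $-\Delta$ match. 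Third, compatibility with supports: the passage from \eqref{eq:pre-pushforward} to $f_*$ uses $R\sGamma_{\Phi}$ and excision, and since $g$ is pulling (so $g^{-1}(\Phi_Y) \subseteq \Phi_{Y'}$) there is a natural compatibility of $R\sGamma_{\Phi_{Y'}}$ with $g^*$; combined with \Cref{prp:coho-with-supp} this propagates base change through the support bookkeeping.

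The main obstacle is the step where Grothendieck duality meets base change: one must check that the base change isomorphism $Lg^* R\bar f_* \simeq R\bar f'_* L\bar g'^*$ is compatible with the duality isomorphism of \Cref{thm:GD} on $Y$ and on $Y'$ --- equivalently, that the associated trace/counit maps are interchanged by base change. This rests on the base change identity for the twisted inverse image, $\bar g'^* \bar f^{!} \simeq \bar f'^{!} g^*$, together with its compatibility with the evaluation $\bar f^{!}\canshf{Y} \to \canshf{\bar X}$; in the flat case this is standard (e.g.\ \cite[Ch.~III]{MR0463157}, \cite{MR1804902}), while in the transverse closed-immersion case it is precisely Tor-independence that licenses it, together with the identification $g^{!}\canshf{Y} \simeq \canshf{Y'}$ (up to the relevant shift and twist) of the normalized dualizing complexes. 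The genuine subtlety is that $\bar X$, hence $\bar X'$, is in general singular and only carries a pulled-back log structure, so these duality--base change compatibilities must be applied to the coherent sheaves $\ologd{\bar X}{p}(-\Delta_{\bar X})$ on singular schemes rather than to smooth-variety differentials. Granting the compatibility of duality with Tor-independent base change in this generality, the argument parallels \cite[Prop.~2.3.7]{MR2923726}, with the log-differential functoriality of \Cref{lem:morph-cxs-shvs-diff} as the only genuinely new input.
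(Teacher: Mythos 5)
Your proposal follows the construction of \(f_*\) directly for a general \(f\), compactifying \(f\colon X \to Y\) over \(Y\) via Nagata and base-changing the compactification to \(\bar{X}' := \bar{X} \times_Y Y'\). This runs into a genuine gap in the transverse closed-immersion case: you assert that ``transversality guarantees Tor-independence of \(\bar{X}\) and \(Y'\) over \(Y\),'' but transversality is a hypothesis on \(g\) and \(f\colon X \to Y\) only --- it controls the fiber product over the \emph{open} part \(X\) and says nothing about the boundary \(\bar{X} \setminus X\). A Nagata compactification is completely uncontrolled there: \(\bar{X}\) can be arbitrarily singular and arbitrarily non-flat over \(Y\) along the boundary, \(\Tor_i^{\strshf{Y}}(\strshf{\bar{X}}, \strshf{Y'})\) need not vanish at boundary points lying over \(Y'\), and indeed \(X'\) need not even be dense in \(\bar{X} \times_Y Y'\) (the fiber product can acquire whole components supported over the boundary). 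Consequently the base-change isomorphisms \(Lg^* R\bar{f}_* \simeq R\bar{f}'_* L\bar{g}'^*\) and \(\bar{g}'^* \bar{f}^! \simeq \bar{f}'^! g^*\), on which your argument rests, are not available in that case; and the remaining duality--trace compatibility on these singular log schemes, which is the real content, is left as an assumption (``granting\dots'') rather than proved.

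This failure mode is exactly what the paper's first move, the graph factorization of \cref{lem:pr-or-climm}, is designed to avoid. Factoring \(f\) as a closed immersion (the graph \(h = \mathrm{id} \times f\)) followed by a projection splits the problem into two cases in which your obstruction disappears: for the projection \(\pr_Y \colon X \times Y \to Y\), one compactifies \(X\) over \(\Spec k\) rather than compactifying \(f\) over \(Y\), so the compactified morphism \(\overline{\pr}_Y\colon \bar{X} \times Y \to Y\) is \emph{flat} (a product), Tor-independence is automatic, and \(g\) may be arbitrary; for a closed immersion, \(f\) is already proper, no compactification is needed at all, and there the transversality hypothesis genuinely does yield \(Lg^* \strshf{X} = \strshf{X'}\) via the Koszul-resolution argument of \cref{lem:climms}. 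With this reduction in place, the paper never needs a general ``duality commutes with Tor-independent base change'' statement on singular log schemes: in the projection case \cref{lem:shriek-thing} factors the pushforward through \(\overline{\pr}_Y^! \sO_Y \otimes \overline{\pr}_Y^*(-)\) so that the only nontrivial compatibility is the trace statement \eqref{eq:known-comm-traces}, already proved in \cite[Lem.~2.3.4]{MR2923726}, and in the immersion case one reduces to the interior (non-log) statement \cite[Cor.~2.2.22]{MR2923726} by a torsion-freeness/density injectivity argument. To repair your proof you would either need to restructure it along these lines, or find a way to choose compactifications Tor-independent of \(Y'\) over \(Y\), which Nagata's theorem does not provide.
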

We will prove this following Chatzistamatiou and R\"ulling's argument
\cite[Prop. 2.3.7]{MR2923726} quite closely, at various points reducing to
statements proved therein. In the proofs we will make use of a slight variant of \cref{def:inv-image-supp}.
\begin{definition}
  \label{def:str-trans-supp}
  If \(f: X \to Y\) is a morphism of noetherian schemes and let \(\Phi_Y\) is a
  family of supports on \(Y\), then 
  \[ f^{-1}_* (\Phi_Y) :=  \{Z \subseteq X  \, | \, f|_{Z}
  \text{ is proper and } f(Z) \in \Phi_Y\}\]
\end{definition}

\begin{lemma}
  \label{lem:pr-or-climm}
  It suffices to prove \cref{lem:base-change-formula} in the cases where \(f\) is
  either 
  \begin{enumerate}
    \item \label{item:pr-or-climm1} a projection morphism of the form \(
    \mathrm{pr}_Y: (X\times Y, \mathrm{pr}_Y^* \Delta_Y, \mathrm{pr}_{Y*}^{-1}(\Phi_Y)) \to (Y, \Delta_Y,
    \Phi_Y)\), or
    \item \label{item:pr-or-climm2} a closed immersion.
  \end{enumerate}
\end{lemma}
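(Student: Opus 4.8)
The plan is to factor the pushing morphism $f$ through its graph and split the cartesian square \eqref{eq:projform-cartdiag} into a vertical stack of two cartesian squares, each falling into one of the two asserted cases. Since $Y$ is separated over $k$, the graph $\Gamma_f := (\mathrm{id}_X, f) : X \to X \times Y$ is a closed immersion and $f = \mathrm{pr}_Y \circ \Gamma_f$. First I would promote this to a factorization of snc pairs with supports: equip $X \times Y$ with the divisor $\mathrm{pr}_Y^* \Delta_Y$ (simple normal crossing because $X$ is smooth and $\Delta_Y$ is snc) and the family of supports $\mathrm{pr}_{Y*}^{-1}\Phi_Y$ of \cref{def:str-trans-supp}. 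Then $\mathrm{pr}_Y$ and $\Gamma_f$ are both pushing: the divisor identity $\Gamma_f^* \mathrm{pr}_Y^* \Delta_Y = f^* \Delta_Y = \Delta_X$ follows from functoriality of divisor pullback, while the support conditions hold because $\mathrm{pr}_{Y*}^{-1}\Phi_Y$ is designed to make $\mathrm{pr}_Y$ pushing and because $\mathrm{pr}_Y \circ \Gamma_f = f$ is pushing (the closed immersion $\Gamma_f$ being automatically proper on every closed set, so $\Gamma_f(\Phi_X) \subseteq \mathrm{pr}_{Y*}^{-1}\Phi_Y$). Using functoriality of the pushforward under composition (as in \cite[\S 2.3]{MR2923726}), this yields $f_* = (\mathrm{pr}_Y)_* \circ (\Gamma_f)_*$.

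Next I would assemble the stack
\[
\begin{tikzcd}
\lsps{X'} \arrow[r, "g'"] \arrow[d, "\Gamma'"'] & \lsps{X} \arrow[d, "\Gamma_f"] \\
(X \times Y', \mathrm{pr}_{Y'}^* \Delta_{Y'}, \mathrm{pr}_{Y'*}^{-1}\Phi_{Y'}) \arrow[r, "\mathrm{id}_X \times g"] \arrow[d, "\mathrm{pr}_{Y'}"'] & (X \times Y, \mathrm{pr}_Y^* \Delta_Y, \mathrm{pr}_{Y*}^{-1}\Phi_Y) \arrow[d, "\mathrm{pr}_Y"] \\
\lsps{Y'} \arrow[r, "g"] & \lsps{Y}
\end{tikzcd}
\]
in which $\Gamma'$ is the closed immersion realizing $X' = X \times_Y Y'$ as a closed subscheme of $X \times Y'$ (again using that $Y$ is separated). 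A direct fiber-product computation shows both squares are cartesian: the bottom square is the base change of $\mathrm{pr}_Y$ along $g$, since $(X \times Y) \times_Y Y' = X \times Y'$; and the top square is the base change of $\Gamma_f$ along $\mathrm{id}_X \times g$, since $X \times_{X \times Y} (X \times Y') = X \times_Y Y' = X'$. The left column composes to $\mathrm{pr}_{Y'} \circ \Gamma' = f'$, so the outer rectangle is \eqref{eq:projform-cartdiag}. The bottom square is then an instance of case \cref{item:pr-or-climm1} and the top square an instance of case \cref{item:pr-or-climm2}.

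Granting the base change formula in those two cases, I would finish by pasting:
\[ g^* f_* = g^* (\mathrm{pr}_Y)_* (\Gamma_f)_* = (\mathrm{pr}_{Y'})_* (\mathrm{id}_X \times g)^* (\Gamma_f)_* = (\mathrm{pr}_{Y'})_* (\Gamma')_* (g')^* = f'_* (g')^*, \]
where the second equality applies case \cref{item:pr-or-climm1} to the bottom square, the third applies case \cref{item:pr-or-climm2} to the top square, and the last is again functoriality of the pushforward.

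The hard part will be checking that the hypotheses of \cref{lem:base-change-formula} --- the flat-or-transverse condition on the horizontal maps and the support conditions --- are inherited by both sub-squares, above all by the top one. If $g$ is flat then $\mathrm{id}_X \times g$ is flat and there is nothing to prove, while $g$ transverse to the smooth projection $\mathrm{pr}_Y$ is automatic, so the bottom square is unproblematic. The delicate case is $g$ a transverse closed immersion feeding into the top square: I must show $\mathrm{id}_X \times g$ is a closed immersion transverse to the graph $\Gamma_f$. Since the scheme-theoretic intersection of $\Gamma_f(X)$ and $X \times Y'$ inside $X \times Y$ is precisely $X' = X \times_Y Y'$, this should reduce, by a local coordinate computation, to the assumed transversality of $g$ and $f$ in $Y$. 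In parallel I must verify the pulling condition $(\mathrm{id}_X \times g)^{-1}(\mathrm{pr}_{Y*}^{-1}\Phi_Y) \subseteq \mathrm{pr}_{Y'*}^{-1}\Phi_{Y'}$ (which is exactly what is needed for $\mathrm{id}_X \times g$ to be a pulling morphism in both sub-squares), and I expect this to follow from the cancellation property of proper morphisms together with the pulling hypothesis $g^{-1}(\Phi_Y) \subseteq \Phi_{Y'}$.
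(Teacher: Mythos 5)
Your proposal is correct and takes essentially the same approach as the paper: the paper's proof also factors \(f\) through its graph (there written \(h = \mathrm{id} \times f\), your \(\Gamma_f\)), decomposes \eqref{eq:projform-cartdiag} into the identical stack of two cartesian squares with the same middle row \((X\times Y', \mathrm{pr}_{Y'}^* \Delta_{Y'}, \mathrm{pr}_{Y'*}^{-1}\Phi_{Y'}) \to (X\times Y, \mathrm{pr}_Y^* \Delta_Y, \mathrm{pr}_{Y*}^{-1}\Phi_Y)\), and likewise disposes of the flat-or-transverse hypothesis by noting it is inherited by \(\mathrm{id}\times g\) via base change. The only difference is one of explicitness: you write out the pasting of the two base-change identities and flag the support and transversality verifications, which the paper compresses into the remark that the middle-row divisors and supports are chosen so the vertical maps are pushing, together with the functoriality of \cref{lem:pushforward}.
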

\begin{remark}
  This lemma makes essential use of the \emph{functoriality} part of \cref{lem:pushforward}.
\end{remark}
\begin{proof}
  We can decompose \eqref{eq:projform-cartdiag} as a concatenation of cartisian diagrams
  \begin{equation}
    \label{eq:pform-decomp}
    \begin{tikzcd}
      \lsps{X'} \arrow[dr, phantom, "(2)"] \arrow[r, "g'"] \arrow[d, "h'"] & \lsps{X} \arrow[d, "h"] \\
      (X\times Y', \mathrm{pr}_{Y'}^* \Delta_Y, \mathrm{pr}_{Y'*}^{-1}(\Phi_Y')) \arrow[dr, phantom, "(1)"] \arrow[d, "\pr_{Y'}"] \arrow[r, "\mathrm{id} \times g"] &(X\times Y, \mathrm{pr}_Y^* \Delta_Y, \mathrm{pr}_{Y*}^{-1}(\Phi_Y)) \arrow[d, "\pr_Y"]\\
      \lsps{Y'} \arrow[r, "g"] & \lsps{Y}
    \end{tikzcd}
  \end{equation}
  where \(h = \mathrm{id} \times f\) is the graph morphism of \(f\) and \(h' =
  g' \times f'\). If \(g\) is flat or a closed immersion transverse to \(f\)
  then \(\mathrm{id} \times g\) is flat or a closed immersion transverse to \(h\) (by base change).
  
  Here the only new feature not covered in \cite[Prop. 2.3.7]{MR2923726} is the
  presence of divisors, and we simply note that \(\Delta_X = f^*\Delta_X = h^*
  \mathrm{pr}_Y^* \Delta_Y \) and similarly for \(\Delta_{X'}\), so that both
  \(\pr_Y\) and \(h\) are pushing morphisms in the sense of
  \cref{def:push-pull}, and similarly for the left vertical maps. In other
  words, the supports and divisors in the middle row have been chosen precisely
  so that the vertical morphisms are all ``pushing.''
\end{proof}

We proceed to consider case \cref{item:pr-or-climm1}, and wish to point out that
for this case \(g\) can be arbitrary (we will need the flatness/transversality restrictions in case \cref{item:pr-or-climm2}). In what follows we set
\(d_X = \dim X, d_Y = \dim Y\) and similarly for \(X', Y'\).  
Using \cref{thm:nagata} we
obtain a compactification \(\iota: X \inj \overline{X}\) over \(k\) of the
smooth, separated and finite type \(k\)-scheme \(X\) in the upper right corner
of \eqref{eq:projform-cartdiag} and \eqref{eq:pform-decomp}. This results in a
compactification of the square (1) in \eqref{eq:pform-decomp} which we write as 
\begin{equation}
  \label{eq:cpctified-pform}
  \begin{tikzcd}
    (X \times Y', \mathrm{pr}_{Y'}^* \Delta_Y, \mathrm{pr}_{Y'*}^{-1}(\Phi_Y')) \arrow[d, "\iota \times \mathrm{id}"] \arrow[r, "\mathrm{id} \times g"] &(X \times Y, \mathrm{pr}_Y^* \Delta_Y, \mathrm{pr}_{Y*}^{-1}(\Phi_Y)) \arrow[d, "\iota \times \mathrm{id}"] \\
    (\overline{X} \times Y', \overline{\pr}_{Y'}^* \Delta_Y, \overline{\pr}_{Y'*}^{-1}(\Phi_Y')) \arrow[d, "\overline{\pr}_{Y'}"] \arrow[r, "\mathrm{id} \times g"] &(\overline{X} \times Y, \overline{\pr}_Y^* \Delta_Y, \overline{\pr}_{Y*}^{-1}(\Phi_Y)) \arrow[d, "\overline{\pr}_Y"]\\
      \lsps{Y'} \arrow[r, "g"] & \lsps{Y}
  \end{tikzcd}
\end{equation}
By the description following \cref{lem:pushforward}, we know that 
\[\pr_{Y*} : H^*(X \times Y, \mathrm{pr}_Y^* \Delta_Y,
\mathrm{pr}_{Y*}^{-1}(\Phi_Y))  \to H^*\lsps{Y} \] stems from a morphism 
\begin{equation}
  \label{eq:map-from-which-pushfor-stems}
  R\overline{\pr}_{Y*} R\sHom_{\overline{X} \times Y }(\Omega^p_{\overline{X} \times Y}(\log
  \mathrm{pr}_Y^* \Delta_Y )(-\mathrm{pr}_Y^* \Delta_Y)[p], \omega_{\overline{X} \times Y}^\bullet)  \to \Omega^{d_Y - p}_Y(\log \Delta_Y)[d_Y - p] 
\end{equation}
obtained as the Grothendieck dual of a log differential of \(\overline{\pr}_Y\)
(here and throughout what follows, a similar statement holds for
\(\overline{\pr}_{Y'}\)). By an observation of Chatzistamatiou-R\"ulling
, this map factors as 
\begin{equation}
  \label{eq:cr-factorization}
  \begin{split}
    &R\overline{\pr}_{Y*} R\sHom_{\overline{X} \times Y }(\Omega^p_{\overline{X} \times Y}(\log\overline{\pr}_Y^* \Delta_Y )(-\overline{\pr}_Y^* \Delta_Y)[p], \omega_{\overline{X} \times Y}^\bullet) \\
     &\to R\overline{\pr}_{Y*} R\sHom_{\overline{X} \times Y }(L \overline{\pr}_{Y}^* \Omega^p_Y(\log \Delta_Y)(-\Delta_Y)[p], \omega_{\overline{X} \times Y}^\bullet)\\
    &\xrightarrow[\text{adjunction}]{\simeq}  R\sHom_Y
    (\Omega^p_Y(\log \Delta_Y)(-\Delta_Y)[p] , R\overline{\pr}_{Y*} \omega_{\overline{X} \times Y}^\bullet ) \\
    & \xrightarrow[\text{trace}]{} R\sHom_Y
    (\Omega^p_Y(\log \Delta_Y)(-\Delta_Y)[p] , \omega_Y[d_Y])\\
    &\xrightarrow[]{\simeq} \Omega^{d_Y - p}_Y(\log \Delta_Y)[d_Y - p] 
  \end{split}
\end{equation}
where the adjunction isomorphism is \cite[Prop. II.5.10]{MR0222093}, and the map
labeled trace is induced by the Grothendieck trace \(R\overline{\pr}_{Y*}
\omega_{\overline{X} \times Y}^\bullet \to \omega_Y[d_Y]\). \emph{If} it were
the case that \(\overline{X}\) were smooth, then the usual ``box product''
decomposition 
\[\omega_{\overline{X} \times Y}^\bullet \simeq \omega_{\overline{X}}[d_X]
\boxtimes \omega_Y[d_Y] := \pr_{\overline{X}}^* \omega_{\overline{X}}[d_X] \otimes
\overline{\pr}_{Y*} \omega_Y[d_Y] \] together with the perect pairings
\eqref{eq:ppp-pair} and the local freeness of \(\Omega^p_Y(\log
\Delta_Y)(-\Delta_Y)[p]\) would give an identification 
\begin{equation}
  \small
  \label{eq:cr-crucial-lemma}
  R\sHom_{\overline{X} \times Y }(L \overline{\pr}_{Y}^* \Omega^p_Y(\log \Delta_Y)(-\Delta_Y)[p], \omega_{\overline{X} \times Y}^\bullet) \simeq \pr_{\overline{X}}^* \omega_{\overline{X}}[d_X] \otimes \overline{\pr}_{Y}^* \Omega^{d_Y - p}_Y(\log \Delta_Y)[d_Y - p]
\end{equation}
In fact a more careful version of this argument, carrying out the above calculation on the smooth locus \(X \times
Y\) and using excision, shows that \(H^*(X \times Y, \mathrm{pr}_Y^* \Delta_Y,
\mathrm{pr}_{Y*}^{-1}(\Phi_Y))  \to H^*\lsps{Y} \) \emph{always} factors through
the summand \(H_{\Phi_X}^* (X \times Y, \pr_{\overline{X}}^*
\omega_{\overline{X}} \otimes \overline{\pr}_{Y}^* \Omega^{d_Y - p}_Y(\log
\Delta_Y))\).

Our next lemma implies that even when \(\overline{X}\) is not known to be
smooth, \eqref{eq:map-from-which-pushfor-stems} still factors through something
like \(R\overline{\pr}_{Y*} (\pr_{\overline{X}}^* \omega_{\overline{X}}[d_X]
\otimes \overline{\pr}_{Y}^* \Omega^{d_Y - p}_Y(\log
\Delta_Y)[d_Y - p]) \), provided we replace \(\pr_{\overline{X}}^*
\omega_{\overline{X}}[d_X]\) with \(\overline{\pr}_{Y}^{!}\sO_Y\). 

\begin{lemma}[{compare with \cite[Lem. 2.2.16]{MR2923726}}]
  \label{lem:shriek-thing}
  For each \(p\) there is a natural map 
  \[ \gamma :  \overline{\pr}_{Y}^{!}\sO_Y \otimes \overline{\pr}_{Y}^*
  \Omega^{d_Y - p}_Y(\log \Delta_Y)(-\Delta_Y)[d_Y - p] \to
  R\sHom_{\overline{X} \times Y }(\overline{\pr}_{Y}^* \Omega^p_Y(\log
  \Delta_Y)(-\Delta_Y)[p], \omega_{\overline{X} \times Y}^\bullet) \] 
  such that the restriction of \(\gamma\) to \(X \times Y\) agrees with the
  isomorphism 
  \[ \pr_{X}^* \omega_{X}[d_X] \otimes
  \pr_{Y}^* \Omega^{d_Y - p}_Y(\log \Delta_Y)(-\Delta_Y)[d_Y -
  p]  \xrightarrow[]{\simeq} R\sHom_{X \times Y }(L
  \pr_{Y}^* \Omega^p_Y(\log \Delta_Y)(-\Delta_Y)[p],
  \omega_{X \times Y}^\bullet) \]
  and such that the composition
  \begin{equation}
    \label{eq:shriek-thing1}
    \begin{split}
      & R \overline{\pr}_{Y*}(\pr_{X}^* \omega_{X}[d_X] \otimes
      \pr_{Y}^* \Omega^{d_Y - p}_Y(\log \Delta_Y)(-\Delta_Y)[d_Y -
      p])  \\
      & \xrightarrow[]{R \overline{\pr}_{Y*}(\gamma)} R \overline{\pr}_{Y*}R\sHom_{X \times Y }(
      \pr_{Y}^* \Omega^p_Y(\log \Delta_Y)(-\Delta_Y)[p],
      \omega_{X \times Y}^\bullet) \\
      &\xrightarrow[\text{adjunction}]{\simeq} R\sHom_{X \times Y }(\Omega^p_Y(\log \Delta_Y)(-\Delta_Y)[p],
      R \overline{\pr}_{Y*} \omega_{X \times Y}^\bullet) \\
      &\xrightarrow[]{\text{trace}} R\sHom_{X \times Y }(\Omega^p_Y(\log \Delta_Y)(-\Delta_Y)[p], \omega_Y[d_Y]) \simeq \Omega^{d_Y - p}_Y(\log \Delta_Y)(-\Delta_Y)[d_Y - p]
    \end{split}
  \end{equation}
   coincides with the composition 
  \begin{equation}
    \label{eq:shriek-thing2}
    \begin{split}
      &R \overline{\pr}_{Y*}( \overline{\pr}_{Y}^{!}\sO_Y \otimes \overline{\pr}_{Y}^*  \Omega^{d_Y - p}_Y(\log \Delta_Y)(-\Delta_Y)[d_Y - p]  )\\
      &\xrightarrow[\text{form.}]{\text{proj.}} R \overline{\pr}_{Y*}( \overline{\pr}_{Y}^{!}\sO_Y  )\otimes \overline{\pr}_{Y}^*  \Omega^{d_Y - p}_Y(\log \Delta_Y)(-\Delta_Y)[d_Y - p] \\
      & \xrightarrow[]{\tr \otimes \mathrm{id}} \Omega^{d_Y - p}_Y(\log \Delta_Y)(-\Delta_Y)[d_Y - p]
    \end{split}
  \end{equation}
\end{lemma}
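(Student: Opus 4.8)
Throughout write $f = \overline{\pr}_Y \colon \overline{X}\times Y \to Y$, which is proper since $\overline{X}$ is proper over $k$, and set $\sA := \ologd{Y}{p}(-\Delta_Y)[p]$ (a locally free, hence perfect, complex on $Y$) and $\sB := \omega_Y[d_Y]$. Over $\Spec k$ the dualizing complex of the product is $\omega_{\overline{X}\times Y}^\bullet \simeq f^{!}\sB$, and the perfect pairing \eqref{eq:ppp-pair} identifies $\sC := R\sHom_Y(\sA, \sB)$ with the sheaf of complementary-degree log forms $\ologd{Y}{d_Y-p}[d_Y-p]$ appearing as the second factor in the source of $\gamma$. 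The plan is to define $\gamma$ by the tensor--Hom adjunction on $\overline{X}\times Y$ from a map $\tilde\gamma \colon f^{!}\sO_Y \otimes Lf^{*}\sC \otimes Lf^{*}\sA \to f^{!}\sB$. Since $Lf^{*}\sC \otimes Lf^{*}\sA = Lf^{*}(\sC\otimes\sA)$, the evaluation $R\sHom_Y(\sA,\sB)\otimes\sA \to \sB$ pulls back to $Lf^{*}\sC\otimes Lf^{*}\sA \to Lf^{*}\sB$; tensoring with $f^{!}\sO_Y$ and then applying the projection-formula map $f^{!}\sO_Y\otimes Lf^{*}\sB \to f^{!}(\sO_Y\otimes\sB)=f^{!}\sB$ gives $\tilde\gamma$, and hence $\gamma$. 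Each ingredient is natural, so $\gamma$ is a natural map and is compatible with restriction to opens.

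For the first property I would restrict along the open immersion $X\times Y \hookrightarrow \overline{X}\times Y$, over which $f$ becomes the \emph{smooth} projection $\pr_Y\colon X\times Y\to Y$. Then $\pr_Y^{!}\sO_Y \simeq \pr_X^{*}\omega_X[d_X]$ (the relative dualizing sheaf of a base change of $X\to\Spec k$), the projection-formula map is an isomorphism because $\pr_Y$ is smooth, and $L\pr_Y^{*}R\sHom_Y(\sA,\sB)\to R\sHom(L\pr_Y^{*}\sA, L\pr_Y^{*}\sB)$ is an isomorphism because $\sA$ is perfect. Hence $\gamma|_{X\times Y}$ is a composite of isomorphisms and coincides with the box-product identification \eqref{eq:cr-crucial-lemma}. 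As the logarithmic decorations change only the perfect complex $\sA$ and enter none of the duality manipulations, this is exactly the non-logarithmic \cite[Lem. 2.2.16]{MR2923726}, to which I reduce.

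The main work is the second property, the equality of \eqref{eq:shriek-thing1} and \eqref{eq:shriek-thing2} after applying $Rf_{*}$. The key observation is that the projection-formula map $f^{!}\sO_Y\otimes Lf^{*}\sB\to f^{!}\sB$ used in building $\tilde\gamma$ is, by its very construction, adjoint under $(Rf_{*}, f^{!})$ to the composite $Rf_{*}(f^{!}\sO_Y\otimes Lf^{*}\sB)\simeq Rf_{*}f^{!}\sO_Y\otimes\sB \xrightarrow{\tr\otimes\mathrm{id}} \sB$ of the \emph{ordinary} projection formula with the Grothendieck trace $\tr\colon Rf_{*}f^{!}\sO_Y\to\sO_Y$. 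Unwinding $\gamma$ through this description, both \eqref{eq:shriek-thing1} and \eqref{eq:shriek-thing2}, after $Rf_{*}$ and the Hom-sheaf adjunction \cite[Prop. II.5.10]{MR0222093}, reduce to the single map assembled from the evaluation $\sC\otimes\sA\to\sB$ and the trace $\tr$; their equality is then forced by the triangle identities for the two adjunctions together with the naturality of the ordinary projection formula. I expect the crux to be precisely this bookkeeping: keeping the three adjunctions straight (tensor--Hom on $\overline{X}\times Y$, the duality adjunction $(Rf_{*},f^{!})$, and \cite[Prop. II.5.10]{MR0222093}) and matching the two a priori different trace maps, for $\sB$ versus for $\sO_Y$. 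The cleanest route is to invoke the compatibility of the Grothendieck trace with the projection formula from the duality formalism (\cite[Thm. 3.4.4]{MR1804902}, with the trace compatibilities of \cite{MR0222093}); failing a black-box reference, one factors the proper $f$ through $\PP^N\times Y \to Y$ and checks the identity against the explicit trace there. Either way the logarithmic twist is inert in the duality computation, so the argument of \cite[Lem. 2.2.16]{MR2923726} transfers once $\sA$ is substituted.
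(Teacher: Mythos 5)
Your construction of \(\gamma\) is the paper's own: both build it from the perfect pairing \eqref{eq:ppp-pair}, pullback of the evaluation/dualization data, tensor--Hom adjunction, and the map \(\overline{\pr}_Y^{!}\sO_Y \otimes L\overline{\pr}_Y^{*}(-) \to \overline{\pr}_Y^{!}(-)\) (Conrad's map \(e\) from [4.3.12] of \cite{MR1804902}), and both reduce the equality of \eqref{eq:shriek-thing1} and \eqref{eq:shriek-thing2} to the compatibility of that map with the projection formula and the Grothendieck trace --- the paper cites Conrad's Theorem 4.4.1 for exactly the adjoint/trace characterization you invoke, and likewise handles the restriction property by the smooth-locus identification. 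So the proposal is correct and follows essentially the same route as the paper.
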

By base change for dualizing complexes
(\cite[\href{https://stacks.math.columbia.edu/tag/0BZX}{Tag
0BZX},\href{https://stacks.math.columbia.edu/tag/0E2S}{Tag
0E2S}]{stacks-project}) applied to the cartesian diagram
\[\begin{tikzcd}
  \overline{X} \times Y  \arrow[d] \arrow[r] &\overline{X} \arrow[d]\\
  Y \arrow[r] & \Spec k\\
\end{tikzcd}\]
(note that this is a very mild situation: \(\overline{X} \to \Spec k \) is flat
and proper and \(Y \to \Spec k\) is smooth) we see that
\(\overline{\pr}_{Y}^{!}\sO_Y \simeq \pr_{\overline{X}}^*
\omega_{\overline{X}}^\bullet\). This makes the map \(\gamma\) look even more
like \eqref{eq:cr-crucial-lemma}.
\begin{proof}
  Following \cite[Lem. 2.2.16]{MR2923726} we begin with the morphism 
  \[ e : \overline{\pr}_{Y}^{!}\sO_Y \otimes^L L\overline{\pr}_{Y}^{*}
  \omega_Y^\bullet \to \overline{\pr}_{Y}^{!} \omega_Y^\bullet  =:
  \omega_{\overline{X} \times Y}^\bullet
  \] of \cite[4.3.12]{MR1804902}, which as explained therein  agrees with 
  \[ \pr_X^* \omega_X [d_X] \otimes \pr_Y^* \omega_Y[d_Y]
  \xrightarrow[]{\simeq} \omega_{X \times Y} [d_X + d_Y]\] on locus \(X
  \times Y\),\footnote{See Conrad's comment ``It is easy to check
  that \(e_f\) coincides with (3.3.21) in the smooth case and is compatible with
  composites in f (using (4.3.6).''} and has the property that 
  \[ 
    \begin{tikzcd}
      R \overline{pr}_{Y*}( \overline{\pr}_{Y}^{!}\sO_Y \otimes^L L\overline{\pr}_{Y}^{*}  \omega_Y^\bullet) \arrow[r, "R \overline{pr}_{Y*} e"] \arrow[d, "\text{proj. form}"] & R \overline{pr}_{Y*} \omega_{\overline{X} \times Y}^\bullet \arrow[d, "\tr"] \\
      R \overline{pr}_{Y*} \overline{\pr}_{Y}^{!}\sO_Y \otimes^L \omega_Y^\bullet \arrow[r, "\tr \otimes \mathrm{id}"] & \omega_Y^\bullet \\
    \end{tikzcd}  
  \] commutes \cite[Thm. 4.4.1]{MR1804902}. We then define our version of \(\gamma\) as the composition 
  \begin{equation}
    \label{eq:my-gamma}
    \begin{split}
      &\overline{\pr}_{Y}^{!}\sO_Y \otimes^L L\overline{\pr}_{Y}^*
      \Omega^{d_Y - p}_Y(\log \Delta_Y)(-\Delta_Y)[d_Y - p] \\
      & \xrightarrow{\mathrm{id} \otimes^L \text{\eqref{eq:ppp-pair}}} \overline{\pr}_{Y}^{!}\sO_Y \otimes^L L\overline{\pr}_{Y}^*  R\sHom_Y (\Omega^{p}_Y(\log \Delta_Y)[p], \omega_Y^\bullet) \\
      & \xrightarrow[\text{of } L\overline{\pr}_{Y}^*, \otimes^L ]{\text{functoriality}} R\sHom_{\overline{X} \times Y}(L\overline{\pr}_{Y}^* \Omega^{p}_Y(\log \Delta_Y)[p], \overline{\pr}_{Y}^{!}\sO_Y \otimes^L \omega_Y^\bullet) \\
      &\xrightarrow[e]{\text{induced by}} R\sHom_{\overline{X} \times Y}(L\overline{\pr}_{Y}^* \Omega^{p}_Y(\log \Delta_Y)[p], \omega_{\overline{X} \times Y}^\bullet)
    \end{split}
  \end{equation}
  Note that we may drop the ``\(L\)''s as \(\Omega^{d_Y - p}_Y(\log
  \Delta_Y)(-\Delta_Y)\) and \(\Omega^{p}_Y(\log \Delta_Y)\) are locally free.
  Verification of the stated compatibilities is as in \cite[Lem.
  2.2.16]{MR2923726}.
\end{proof}
\begin{remark}
  It seems like we could have also used the more general version of \cite[4.3.12]{MR1804902}
  \[e': \overline{\pr}_{Y}^{!}\sO_Y \otimes^L L\overline{\pr}_{Y}^*
  \Omega^{d_Y - p}_Y(\log \Delta_Y)(-\Delta_Y)[d_Y - p] \to
  \overline{\pr}_{Y}^{!}\Omega^{d_Y - p}_Y(\log \Delta_Y)(-\Delta_Y)[d_Y - p] \]
  together with the description 
  \[\overline{\pr}_{Y}^{!}\Omega^{d_Y - p}_Y(\log \Delta_Y)(-\Delta_Y)[d_Y - p]
  = D_{\overline{X}\times Y}(L\overline{\pr}_{Y}^* D_Y (\Omega^{d_Y - p}_Y(\log
  \Delta_Y)(-\Delta_Y)[d_Y - p]))\]
  where \(D_Y (-) = R\sHom(- , \omega_Y^\bullet)\) and similarly for \(D_{\overline{X}\times Y}\).
\end{remark}

Using this modified \(\gamma\), we obtain a modified version of the diagram
\cite[p. 732 during Lem. 2.3.4]{MR2923726}, namely \eqref{eq:modified-732} in \cref{fig:modified-732}). To make this diagram legible, we use
a few abbreviations:  all functors are derived,  we use the
dualizing functors of the form \(D_Y(-) = R\sHom_Y(-, \omega_Y^\bullet)\) and we
let \(d = d_X + d_Y\). \cref{lem:shriek-thing} shows that triangles
involving \(\gamma\) commute, and \eqref{eq:cr-factorization} gives
commutativity of the rest of the diagram. The usefulness of this diagram
is that by \emph{definition} beginning in the top left corner and following the
path \(\rightarrow \downarrow \) we obtain the pushforward on Hodge cohomology 
\[ \pr_{Y*} \underline{\Gamma}_{\pr_{Y*}^{-1}\Phi_Y} \Omega^{d - p}_{X
\times Y}(\log \mathrm{pr}_Y^* \Delta_Y)[d -p] \to
\underline{\Gamma}_{\Phi_Y}\Omega^{d_Y-p}_{ \times Y}(\log   \Delta_Y )(-
\Delta_Y)[d_Y-p]  \] but following \(\downarrow \rightarrow\) gives a
composition whose  behavior with respect to \eqref{eq:cpctified-pform} is easier
to analyze. Namely, we have a diagram like \eqref{eq:modified-732} on \(Y'\),
and in fact a map from \eqref{eq:modified-732} to \(g_*\) of the analogous
diagram on \(Y'\), and hence from the preceding discussion it will suffice to
prove commutativity of \eqref{eq:down-around} of \cref{fig:modified-732}.

\begin{sidewaysfigure}
  \centering
  \begin{equation}
    \scriptsize
    \label{eq:modified-732}
    \begin{tikzcd}[column sep=tiny, row sep=huge]
      \pr_{Y*} \underline{\Gamma}_{\pr_{Y*}^{-1}\Phi_Y} \Omega^{d - p}_{X \times Y}(\log \mathrm{pr}_Y^* \Delta_Y)[d -p] \arrow[rr, "\text{excision}"] \arrow[dd, "\text{excision} +\cref{lem:shriek-thing}"]  \arrow[dr] && \overline{\pr}_{Y*} \underline{\Gamma}_{\overline{\pr}_{Y*}^{-1}\Phi_Y} D_{\overline{X} \times Y }(\Omega^p_{\overline{X} \times Y}(\log    \mathrm{pr}_Y^* \Delta_Y )(-\mathrm{pr}_Y^* \Delta_Y)[p]) \arrow[dd, "d\overline{\pr}_{Y}^\vee"] \arrow[dl, "d\overline{\pr}_{Y}^\vee"] \\
      & \overline{\pr}_{Y*} \underline{\Gamma}_{\overline{\pr}_{Y*}^{-1}\Phi_Y} D_{\overline{X} \times Y }(\overline{\pr}_{Y}^*\Omega^p_{ \times Y}(\log   \Delta_Y )(- \Delta_Y)[p]) \arrow[dr] & \\
      \overline{\pr}_{Y*} \underline{\Gamma}_{\overline{\pr}_{Y*}^{-1}\Phi_Y}  (\overline{\pr}_{Y}^{!}\sO_Y \otimes \overline{\pr}_{Y}^*
      \Omega^{d_Y - p}_Y(\log \Delta_Y)(-\Delta_Y)[d_Y - p] )  \arrow[rr,"\eqref{eq:shriek-thing2}"] \arrow[ur, "\overline{\pr}_{Y*}(\gamma)"] && \underline{\Gamma}_{\Phi_Y} D_{Y}(\Omega^p_{  Y}(\log   \Delta_Y )(- \Delta_Y)[p]) = \underline{\Gamma}_{\Phi_Y}\Omega^{d_Y-p}_{ Y}(\log   \Delta_Y )(- \Delta_Y)[d_Y-p] 
    \end{tikzcd}
  \end{equation}  
  \vspace{5em}
  \begin{equation}
    \label{eq:down-around}
    \footnotesize
    \begin{tikzcd}[column sep=small, row sep=huge]
      \pr_{Y*} \underline{\Gamma}_{\overline{\pr}_{Y*}^{-1}\Phi_Y} \Omega^{d - p}_{X \times Y}(\log \mathrm{pr}_Y^* \Delta_Y)[d -p] \arrow[r] \arrow[d]& \overline{\pr}_{Y*} \underline{\Gamma}_{\overline{\pr}_{Y*}^{-1}\Phi_Y}  (\overline{\pr}_{Y}^{!}\sO_Y \otimes \overline{\pr}_{Y}^*
      \Omega^{d_Y - p}_Y(\log \Delta_Y)(-\Delta_Y)[d_Y - p] ) \arrow[r] \arrow[d] &   \underline{\Gamma}_{\Phi_Y}\Omega^{d_Y-p}_{ Y}(\log   \Delta_Y )(- \Delta_Y)[d_Y-p] \arrow[d] \\
      g_*\pr_{Y'*} \underline{\Gamma}_{\overline{\pr}_{Y'*}^{-1}\Phi_{Y'}} \Omega^{d - p}_{X \times Y'}(\log \mathrm{pr}_{Y'}^* \Delta_{Y'})[d -p] \arrow[r] &  g_*\overline{\pr}_{Y'*} \underline{\Gamma}_{\overline{\pr}_{Y'*}^{-1}\Phi_{Y'}}  (\overline{\pr}_{Y'}^{!}\sO_{Y'} \otimes \overline{\pr}_{Y'}^*
      \Omega^{d_Y - p}_{Y'}(\log \Delta_{Y'})(-\Delta_Y)[d_Y - p] ) \arrow[r]  &   g_*\underline{\Gamma}_{\Phi_{Y'}}\Omega^{d_Y-p}_{ Y'}(\log   \Delta_{Y'} )(- \Delta_{Y'})[d_Y-p]
    \end{tikzcd}
  \end{equation}
  \caption[{Modified versions of diagrams appearing in the proof of \cite[Lem.
  2.3.4]{MR2923726}}]{Modified versions of diagrams appearing in the proof of
  \cite[Lem.  2.3.4]{MR2923726} (all functors derived)}
  \label{fig:modified-732}
\end{sidewaysfigure}

Applying excision together with \cref{lem:shriek-thing} we may rewrite the top
row of \eqref{eq:down-around} as 
\begin{equation}
  \label{eq:rewrite-the-rows}
  \begin{split}
    &R\pr_{Y*} R\underline{\Gamma}_{\pr_{Y*}^{-1}\Phi_Y} \Omega^{d - p}_{X \times Y}(\log \mathrm{pr}_Y^* \Delta_Y)[d -p] \\
    & \xrightarrow{\text{project}} R\pr_{Y*} R\underline{\Gamma}_{\pr_{Y*}^{-1}\Phi_Y}  (\pr_X^* \omega_X[d_X] \otimes \pr_Y^* \Omega^{d_Y - p}_Y(\log \Delta_Y)(-\Delta_Y)[d_Y - p] )   \\
    &\xrightarrow[\text{form.}]{\text{proj.}} R\pr_{Y*} R\underline{\Gamma}_{\pr_{Y*}^{-1}\Phi_Y}  (\pr_X^* \omega_X[d_X]  ) \otimes  \Omega^{d_Y - p}_Y(\log \Delta_Y)(-\Delta_Y)[d_Y - p] \\
    & \xrightarrow[]{\tr \otimes \mathrm{id}} R\underline{\Gamma}_{\Phi_Y}\Omega^{d_Y-p}_{ Y}(\log   \Delta_Y )(- \Delta_Y)[d_Y-p]
  \end{split}
\end{equation}
where the first map is induced by a projection 
\[\Omega^{d - p}_{X \times
Y}(\log \mathrm{pr}_Y^* \Delta_Y)[d -p] \to \pr_X^* \omega_X[d_X] \otimes
\pr_Y^* \Omega^{d_Y - p}_Y(\log \Delta_Y)(-\Delta_Y)[d_Y - p]\]
coming from a
K\"unneth-type decomposition of \(\Omega^{d - p}_{X \times Y}(\log
\mathrm{pr}_Y^* \Delta_Y)\), the second is the projection formula, and the last
map is induced by a trace map with supports defined as the composition
\begin{equation}
  \label{eq:trace-w-supports}
  \begin{split}
    & R\pr_{Y*} R\underline{\Gamma}_{\pr_{Y*}^{-1}\Phi_Y}  (\pr_X^* \omega_X[d_X]  )\xrightarrow{\text{excision}} R\overline{\pr}_{Y*} R\underline{\Gamma}_{\overline{\pr}_{Y*}^{-1}\Phi_Y}  (\overline{\pr}_{Y}^{!}\sO_Y) \\
    & \xrightarrow[]{\text{\cref{prp:coho-with-supp}}} R\underline{\Gamma}_{\Phi_Y} R\overline{\pr}_{Y*}   (\overline{\pr}_{Y}^{!}\sO_Y)\xrightarrow{\tr} R\underline{\Gamma}_{\Phi_Y}  \sO_Y
  \end{split}
\end{equation}
Here the second map comes from the functoriality properties of
\cref{prp:coho-with-supp}, since there is an inclusion \(\pr_{Y*}^{-1}\Phi_Y
\subseteq \pr_{Y}^{-1}\Phi_Y\). The decomposition \eqref{eq:rewrite-the-rows}
maps to a similar decomposition of the bottom row of \eqref{eq:down-around}, and
the only commutativity not guaranteed by standard functoriality properties (e.g.
functoriality of the projection formula appearing in the second map of
\eqref{eq:rewrite-the-rows}) is that of
\begin{equation}
  \label{eq:nontriv-comm-traces}
  {\footnotesize
  \begin{tikzcd}[column sep=small]
    R\pr_{Y*} R\underline{\Gamma}_{\pr_{Y*}^{-1}\Phi_Y}  (\pr_X^* \omega_X[d_X]  ) \otimes  \Omega^{d_Y - p}_Y(\log \Delta_Y)(-\Delta_Y)[d_Y - p] \arrow[r, "\tr \otimes \mathrm{id}"] \arrow[d]  & R\underline{\Gamma}_{\Phi_Y}\Omega^{d_Y-p}_{ Y}(\log   \Delta_Y )(- \Delta_Y)[d_Y-p] \arrow[d] \\
    Rg_* (R\pr_{Y'*} R\underline{\Gamma}_{\pr_{Y'*}^{-1}\Phi_{Y'}}  (\pr_X^* \omega_X[d_X]  ) \otimes  \Omega^{d_Y - p}_{Y'}(\log \Delta_{Y'})(-\Delta_{Y'})[d_Y - p]) \arrow[r, "\tr' \otimes \mathrm{id}"] & Rg_*(R\underline{\Gamma}_{\Phi_{Y'}}\Omega^{d_Y-p}_{Y'}(\log   \Delta_{Y'} )(- \Delta_{Y'})[d_Y-p])
  \end{tikzcd}}
\end{equation}
But applying one more projection formula to the bottom row of
\eqref{eq:nontriv-comm-traces}, we see \eqref{eq:nontriv-comm-traces} is
obtained by tensoring the differential 
\[\Omega^{d_Y-p}_{ Y}(\log   \Delta_Y )(-
\Delta_Y)[d_Y-p] \to Rg_*\Omega^{d_Y-p}_{Y'}(\log   \Delta_{Y'} )(-
\Delta_{Y'})[d_Y-p]\]
with 
\begin{equation}
  \label{eq:known-comm-traces}
  \small
  \begin{tikzcd}[column sep=small]
    R\pr_{Y*} R\underline{\Gamma}_{\pr_{Y*}^{-1}\Phi_Y}  (\pr_X^* \omega_X[d_X]  ) \arrow[r, "\tr \otimes \mathrm{id}"] \arrow[d]  & R\underline{\Gamma}_{\Phi_Y}\sO_Y \arrow[d] \\
    Rg_* (R\pr_{Y'*} R\underline{\Gamma}_{\pr_{Y'*}^{-1}\Phi_{Y'}}  (\pr_X^* \omega_X[d_X]  )) \arrow[r, "\tr' \otimes \mathrm{id}"] & Rg_*(R\underline{\Gamma}_{\Phi_{Y'}}\sO_{Y'}) 
  \end{tikzcd}
\end{equation}
and the commutativity of \eqref{eq:known-comm-traces} is proved in \cite[Lem.
2.3.4]{MR2923726}. So far we have proved:

\begin{lemma}
  \Cref{lem:base-change-formula} holds in case \cref{item:pr-or-climm1} of \cref{lem:pr-or-climm}.
\end{lemma}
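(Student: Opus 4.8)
The plan is to recognize the asserted base-change identity, in case \cref{item:pr-or-climm1}, as the commutativity of the outer rectangle of \eqref{eq:down-around}. Indeed, by the explicit construction of the pushforward following \cref{lem:pushforward}, the top row of \eqref{eq:down-around} computes the morphism of complexes underlying $\pr_{Y*}$, while the bottom row computes the one underlying $Rg_*\pr_{Y'*}$; the vertical arrows are the pullbacks $g^*$ and $(\mathrm{id}\times g)^*$. Passing to cohomology then yields exactly $g^*\pr_{Y*} = \pr_{Y'*}(\mathrm{id}\times g)^*$, so the whole task reduces to verifying that this diagram commutes.

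First I would rewrite both rows via the K\"unneth-type factorization \eqref{eq:rewrite-the-rows}. Using \cref{lem:shriek-thing} together with excision, the pushforward factors through the box-product summand $\pr_X^*\omega_X[d_X]\otimes\pr_Y^*\Omega^{d_Y-p}_Y(\log\Delta_Y)(-\Delta_Y)[d_Y-p]$; the projection formula then separates off the log-differential-form factor pulled back from $Y$. Every arrow in this factorization save the concluding trace-with-supports of \eqref{eq:trace-w-supports} is manifestly natural in $g$, so the only genuinely nontrivial commutativity that remains is that of the square \eqref{eq:nontriv-comm-traces}.

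Next I would peel the inert tensor factor $\Omega^{d_Y-p}_Y(\log\Delta_Y)(-\Delta_Y)[d_Y-p]$ off of \eqref{eq:nontriv-comm-traces} by one further application of the projection formula (to its bottom row), presenting that square as the tensor product of the pullback differential $\Omega^{d_Y-p}_Y(\log\Delta_Y)(-\Delta_Y)[d_Y-p]\to Rg_*\Omega^{d_Y-p}_{Y'}(\log\Delta_{Y'})(-\Delta_{Y'})[d_Y-p]$ with the square \eqref{eq:known-comm-traces}. The latter involves only the structure sheaves and the Grothendieck traces, and its commutativity is exactly \cite[Lem. 2.3.4]{MR2923726}. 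Here $g$ may be arbitrary --- no flatness or transversality is invoked --- which is consistent with the remark that those hypotheses are needed only in case \cref{item:pr-or-climm2}.

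The step I expect to be the main obstacle is the bookkeeping of families of supports, concentrated in the trace-with-supports \eqref{eq:trace-w-supports}. One must observe that this trace is built from $\pr_{Y*}^{-1}\Phi_Y$ rather than $\pr_Y^{-1}\Phi_Y$, so that applying the functoriality of \cref{prp:coho-with-supp} requires the inclusion $\pr_{Y*}^{-1}\Phi_Y\subseteq\pr_Y^{-1}\Phi_Y$, and one must check that the resulting trace is compatible with the change-of-base along $g$. Once this support-compatibility is secured, the commutativity of all the remaining cells follows from the standard naturality of excision, of the projection formula, and of the pushforward of \cref{lem:pushforward}, completing the verification in case \cref{item:pr-or-climm1}.
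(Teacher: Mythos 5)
Your proposal reproduces the paper's own argument essentially step for step: the reduction to commutativity of \eqref{eq:down-around}, the rewriting of its rows via \cref{lem:shriek-thing}, excision and the factorization \eqref{eq:rewrite-the-rows}, the isolation of \eqref{eq:nontriv-comm-traces} as the only square not handled by standard naturality, and the final projection-formula step reducing it to \eqref{eq:known-comm-traces}, which is settled by \cite[Lem. 2.3.4]{MR2923726}. Even your two side remarks --- that \(g\) may be arbitrary in this case and that the trace with supports \eqref{eq:trace-w-supports} rests on the inclusion \(\pr_{Y*}^{-1}\Phi_Y \subseteq \pr_Y^{-1}\Phi_Y\) via \cref{prp:coho-with-supp} --- appear verbatim in the paper's treatment.
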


It remains to deal with case \cref{item:pr-or-climm2} of
\cref{lem:pr-or-climm}, and for this we use the following lemma.

\begin{lemma}[{compare with \cite[Cor. 2.2.22]{MR2923726}}]
  \label{lem:climms}
  Consider a diagram of pure-dimensional snc pairs 
  \begin{equation}
    \label{eq:cart-diag-climms}
    \begin{tikzcd}
      \lsp{X'} \arrow[r, "g'"] \arrow[d, "\imath'"] & \lsp{X} \arrow[d,
      "\imath"]\\
      \lsp{Y'} \arrow[r, "g"] &\lsp{Y}
    \end{tikzcd}
  \end{equation}
  where \(\imath, \imath'\) are pushing closed immersions and \(\dim Y - \dim X = \dim Y' - \dim X' =:c\).  Then, for all \(q \) the diagram 
  \begin{equation}
    \label{eq:proj-form-climm-case}
    \begin{tikzcd}
      \imath_* \Omega_X^q(\log \Delta_X) [q] \arrow[r, "dg'^{\vee}"] \arrow[d] & Rg_* \imath'_* \Omega_{X'}^q(\log \Delta_{X'}) \arrow[d] \\
      \Omega_{Y}^{q+c}(\log \Delta_Y )[q+c] \arrow[r, "dg^\vee"] & Rg_* \Omega_{Y'}^{q+c}(\log \Delta_{Y'})[q+c]
    \end{tikzcd}
  \end{equation}
  commutes, where the horizontal maps are induced by log differentials and the
  left vertical map is the composition
  \begin{equation}
    \label{eq:pushfor-climm}
    \begin{split}
      &\imath_* \Omega_X^q(\log \Delta_X) [q]  \xrightarrow{\simeq} \imath_* R\sHom(\Omega_X^{d_X -q}(\log \Delta_X)(-\Delta_X) [d_X -q], \omega_X^\bullet) \\
      &\xrightarrow{\text{duality}} R\sHom(\imath_* \Omega_X^{d_X -q}(\log \Delta_X)(-\Delta_X) [d_X -q], \omega_Y^\bullet)\\
      & \xrightarrow{d\imath^\vee} R\sHom( \Omega_Y^{d_X -q}(\log \Delta_Y)(-\Delta_Y) [d_X -q], \omega_Y^\bullet) \\
      &\xrightarrow{\simeq} \Omega_{Y}^{q+c}(\log \Delta_Y )[q+c]
    \end{split}
  \end{equation} 
  and the right vertical arrow is \(Rg_*\) of a similar composition on \(Y'\).
\end{lemma}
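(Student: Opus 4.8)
The plan is to follow \cite[Cor.~2.2.22]{MR2923726} closely, isolating the non-logarithmic content so that it can be cited directly and verifying by hand only that the divisor twists are carried along correctly. Throughout I use that \(\imath,\imath'\) are pushing closed immersions of smooth \(k\)-schemes, hence regular embeddings: thus \(R\imath_* = \imath_*\) is exact, and for the normalized dualizing complexes over \(k\) there is a canonical identification \(\imath^!\omega_Y^\bullet \simeq \omega_X^\bullet\) (since \(\imath^!\pi_Y^! = (\pi_Y\imath)^! = \pi_X^!\)), and likewise for \(\imath'\). Moreover every sheaf occurring below---\(\ologd{X}{q}\) and its twists \(\ologd{X}{d_X-q}(-\Delta_X)\)---is locally free by \cref{lem:loc-descr-diff-log-poles}, so each \(R\sHom\) is an ordinary \(\sHom\).

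First I would make the two columns transparent. Unwinding \eqref{eq:pushfor-climm} together with the perfect pairings \eqref{eq:ppp-pair} on \(X\) and on \(Y\), the left vertical map is \(R\sHom_Y(-,\omega_Y^\bullet)\) applied to the twisted log differential
\[ d\imath^\vee : \ologd{Y}{d_X-q}(-\Delta_Y) \to \imath_*\, \ologd{X}{d_X-q}(-\Delta_X), \]
conjugated by the Grothendieck-duality adjunction of \cref{thm:GD} for the finite morphism \(\imath\); in particular it is governed by the trace (fundamental class) \(\tr_\imath\colon \imath_*\omega_X^\bullet \simeq \imath_*\imath^!\omega_Y^\bullet \to \omega_Y^\bullet\). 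The right column is the analogous map for \(\imath'\), pushed forward by \(Rg_*\). Using the cartesian identity \(g\imath' = \imath g'\) (\cref{conv:props-of-mps-of-pairs}), the top horizontal map of \eqref{eq:proj-form-climm-case} is \(\imath_*(dg'^\vee)\) and the bottom is \(dg^\vee\), both instances of the contravariant functoriality of log Kähler differentials from \cref{lem:morph-cxs-shvs-diff}. The twists match because \eqref{eq:cart-diag-climms} is a cartesian square of snc pairs: \(\imath^*\Delta_Y = \Delta_X\) and \(\imath'^*\Delta_{Y'} = \Delta_{X'}\) by \cref{def:push-pull}, while pullback along the pulling morphisms \(g,g'\) develops no new poles, again by \cref{lem:morph-cxs-shvs-diff}.

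With the columns presented this way, commutativity of \eqref{eq:proj-form-climm-case} decomposes into two compatibilities. The first is purely formal: functoriality of \(\sHom\) together with the compatibility of the twisted log differentials \(d\imath^\vee, d\imath'^\vee, dg^\vee, dg'^\vee\) imposed by \(g\imath' = \imath g'\), which is immediate from \cref{lem:morph-cxs-shvs-diff}. The second, essential, compatibility is that the Grothendieck-duality adjunctions---equivalently the fundamental classes \(\tr_\imath\) and \(\tr_{\imath'}\)---are exchanged by base change along \(g\); that is, the natural square relating \(g^*\imath_*\omega_X^\bullet\), \(\imath'_* g'^*\omega_X^\bullet\), \(g^*\omega_Y^\bullet\) and \(\omega_{Y'}^\bullet\) commutes. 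This is exactly the non-logarithmic content of \cite[Cor.~2.2.22]{MR2923726}, which I would cite directly: the hypothesis \(\dim Y - \dim X = \dim Y' - \dim X' = c\) ensures the shifts line up, and the underlying base-change isomorphism \(g'^*\imath^!\omega_Y^\bullet \simeq \imath'^! g^*\omega_Y^\bullet\) comes from base change for dualizing complexes \cite[Tags 0BZX, 0E2S]{stacks-project} in the flat case and from \cite{MR1804902} in the transverse-closed-immersion case.

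The hard part will be the transverse closed immersion case of this base-change compatibility. Unlike the flat case, where flat base change for \((-)^!\) is formal, one must here invoke transversality of \(g\) and \(\imath\) to secure the Tor-independence of \eqref{eq:cart-diag-climms} that makes the base-change morphism for \((-)^!\) an isomorphism \emph{and} compatible with the fundamental classes. I expect this to be the delicate point, precisely as in \cite{MR2923726}. The remaining work---checking that the \((-\Delta)\)-twists are preserved throughout---is bookkeeping that follows formally from the fact that every map in \eqref{eq:cart-diag-climms} is a morphism of snc pairs, so that pullback of the relevant log forms and of the divisors \(\Delta_X,\Delta_Y,\Delta_{X'},\Delta_{Y'}\) is compatible with the cartesian structure.
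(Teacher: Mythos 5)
Your proposal is correct in outline, but it takes precisely the route the paper declines: the paper's proof opens by saying that ``a proof following \cite[Cor.~2.2.22]{MR2923726} step-by-step is possible,'' and then does something different, namely a \emph{reduction} to the non-logarithmic case. Concretely, the paper observes that on the interiors the log sheaves become ordinary differentials, so the diagram of interiors \eqref{eq:climms-interiors} is literally covered by \cite[Cor.~2.2.22]{MR2923726}; it then reduces commutativity of \eqref{eq:proj-form-climm-case} to injectivity of the restriction map \eqref{eq:lazy-injection} on \(h^0 R\sHom_Y(\imath_*\ologd{X}{q}[q], Rg_*\ologd{Y'}{q+c}[q+c])\). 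That injectivity is where the transversality hypothesis enters (via a Koszul resolution giving \(Lg^*\sO_X = \sO_{X'}\), hence \(Lg^*\imath_*\ologd{X}{q} = \imath'_*g'^*\ologd{X}{q}\)), combined with the fundamental local isomorphism to see the relevant \(\sHom\) sheaf is locally free on \(X'\) in degree \(0\), reducedness of \(X'\), density of \(U_{Y'}\cap X'\), and left-exactness of \(g_*\). Your plan instead redoes the Chatzistamatiou--R\"ulling diagrams directly in the log setting: unwind both columns as duality-conjugated twisted differentials and split commutativity into formal functoriality plus base-change compatibility of the traces \(\tr_\imath, \tr_{\imath'}\). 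Both approaches consume the same essential input (Tor-independence of \eqref{eq:cart-diag-climms} in the transverse case, flat base change otherwise), but the paper's version buys a shorter argument by never having to verify that the log-twisted duality identifications intertwine \(dg^\vee\) with the dual of \(d\imath'^\vee\) --- commutativity is simply inherited from the dense open where all twists disappear.

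Two cautions if you pursue your route. First, what you call ``bookkeeping'' of the \((-\Delta)\)-twists is not purely formal: the duality pairing in \eqref{eq:pushfor-climm} is the \emph{log} pairing \eqref{eq:ppp-pair}, pairing \(\ologd{X}{q}\) against \(\ologd{X}{d_X-q}(-\Delta_X)\) rather than the untwisted pairing, so the step identifying your square (ii) with ``exactly the non-logarithmic content'' of \cite[Cor.~2.2.22]{MR2923726} is itself a diagram chase that must be written out; that corollary is stated as the full non-log analogue of \cref{lem:climms}, not as a bare trace/base-change compatibility, so it cannot be cited ``directly'' for your step (ii) --- you would need to extract the trace compatibility from its proof. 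Second, a small slip: \(R\sHom_Y(\imath_*\mathscr{F},\omega_Y^\bullet)\) is genuinely derived even when \(\mathscr{F}\) is locally free on \(X\) (its cohomology sits in degree \(c\) by the fundamental local isomorphism), so ``each \(R\sHom\) is an ordinary \(\sHom\)'' only applies to the Homs formed on \(X\) and \(X'\), not to those appearing after pushforward along \(\imath\).
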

Note that the codimension hypotheses hold if \(g\) is flat or a closed immersion
transverse to \(\imath\).
\begin{proof}
  While it seems a proof following \cite[Cor. 2.2.22]{MR2923726} step-by-step is
  possible, we instead \emph{reduce} to the case proved there as follows: first, observe that there is an evident map from the
  cartesian diagram 
  \begin{equation}
    \label{eq:climms-interiors}
    \begin{tikzcd}
      U_{X'} \arrow[r] \arrow[d] & U_X \arrow[d] \\
      U_{Y'}  \arrow[r] & U_Y \\
    \end{tikzcd}
  \end{equation}
  of interiors to \eqref{eq:cart-diag-climms}. Noting that \eqref{eq:proj-form-climm-case} will map to a
  similar diagram obtained from  \eqref{eq:climms-interiors}, that the compositions \eqref{eq:pushfor-climm} are at least
  compatible with Zariski localization, \emph{and} that the situation of
  \eqref{eq:climms-interiors} is covered by \cite[Cor. 2.2.22]{MR2923726}, it
  will suffice to show that the natural map 
  \begin{equation}
    \label{eq:lazy-injection}
    h^0 R\sHom_Y(\imath_* \Omega_X^q(\log \Delta_X) [q],  Rg_*
  \Omega_{Y'}^{q+c}(\log \Delta_{Y'})[q+c]) \to  h^0 R\sHom_{U_Y}(\imath_*
  \Omega_{U_X}^q [q],  Rg_* \Omega_{U_{Y'}}^{q+c}[q+c]) 
  \end{equation}
  is \emph{injective}.
  This can be checked Zariski-locally at a point \(x \in X \subseteq Y\), so we may assume \(X \subseteq
  Y\) is a global complete intersection, say of \(t_1, \dots, t_c \in \sO_Y\).
  In that case the \(t_i \) define a Koszul resolution \(\cK^\bullet(t_i) \to
  \sO_X  \), and \emph{because} \(X' = Y' \times_Y X = V(t_1 \circ g, \cdots
  t_c\circ g)\) is smooth of codimension \(c\) by hypotheses, it must be that
  the \(t_i \circ g\) are also a regular sequence, hence 
  \[ L^i g^* \sO_X =  h^{-i} g^* \cK^\bullet(t_i) = \begin{cases}
    \sO_{X'}, & i =0 \\
    0 & \text{otherwise}
  \end{cases} \]
  in other words \(Lg^* \sO_X = \sO_{X'}\). Now using the fact that \(
  \Omega_X^q(\log \Delta_X)\) is locally free on \(X'\) we conclude 
  \[ Lg^* \imath_* \Omega_X^q(\log \Delta_X) [q] = g^* \imath_*
  \Omega_X^q(\log \Delta_X) [q] = \imath'_* g'^* \Omega_X^q(\log \Delta_X) [q]
  \]
  Next, applying derived adjunction to both sides of \eqref{eq:lazy-injection}
  gives a commutative diagram
  \begin{equation}
    \label{eq:der-adj-bothsides}
    {\footnotesize
    \begin{tikzcd}[column sep=small]
      R\sHom_Y(\imath_* \Omega_X^q(\log \Delta_X) [q],  Rg_*
      \Omega_{Y'}^{q+c}(\log \Delta_{Y'})[q+c]) \arrow[d, equals] \arrow[r] & 
      R\sHom_{U_Y}(\imath_* \Omega_{U_X}^q [q],  Rg_* \Omega_{U_{Y'}}^{q+c}[q+c]) \arrow[d, equals]\\
      Rg_* R\sHom_{Y'} (Lg^* \imath_*   \Omega_X^q(\log \Delta_X) [q],  \Omega_{Y'}^{q+c}(\log \Delta_{Y'})[q+c])  \arrow[d, equals] \arrow[r] & Rg_* R\sHom_{U_{Y'}} (Lg^* \imath_* \Omega_{U_X}^q [q],  \Omega_{U_{Y'}}^{q+c}[q+c]) \arrow[d, equals] \\
      Rg_* R\sHom_{Y'} (\imath'_* g'^* \Omega_X^q(\log \Delta_X) [q],  \Omega_{Y'}^{q+c}(\log \Delta_{Y'})[q+c]) \arrow[r] & Rg_* R\sHom_{U_{Y'}} (\imath'_* g'^* \Omega_{U_X}^q [q],  \Omega_{U_{Y'}}^{q+c}[q+c])
    \end{tikzcd}}
  \end{equation}
  Getting even more Zariski-local we may assume \(\Omega_X^q(\log \Delta_X)\) is
  \emph{free}, say generated by \(dx_1, \dots, dx_{n}\) and in that case 
  \begin{equation}
    \label{eq:dir-sum-decomp-both-factors}
    \begin{split}
      &R\sHom_{Y'} (\imath'_* g'^* \Omega_X^q(\log \Delta_X) [q],
  \Omega_{Y'}^{q+c}(\log \Delta_{Y'})[q+c])  \\
  &= (\prod_i R\sHom_{Y'}  (\sO_{X'}
  dx_i[q], \sO_{Y'}[q+c] ) )\otimes \Omega_{Y'}^{q+c}(\log \Delta_{Y'})
    \end{split}
  \end{equation}
  and by Grothendieck's fundamental local isomorphism  \cite[\S 2.5]{MR1804902}
  \begin{equation}
    \label{eq:fund-loc-iso}
    R\sHom_{Y'}  (\sO_{X'} [q], \sO_{Y'}[q+c] ) )\simeq \sExt_{Y'}^c(\sO_{X'},
  \sO_{Y'}) \simeq \det (\sI_{X'}/\sI_{X'})^\vee
  \end{equation}
  (the last 2 as sheaves
  supported in degree 0). In particular, this is an \emph{invertible sheaf on
  \(X'\)}, and it follows that the left hand side of
  \eqref{eq:dir-sum-decomp-both-factors} is a locally free sheaf (supported in
  degree 0) on \(X'\). Recalling \(X'\) is smooth and so in particular reduced,
  and since \(U_{Y'} \cap X'\) is a dense open (this is part of the hypothesis
  that \(X' \to Y'\) is a pulling map) the natural map 
  \begin{equation}
    \begin{split}
      & h^0 R\sHom_{Y'} (\imath'_* g'^* \Omega_X^q(\log \Delta_X) [q],
  \Omega_{Y'}^{q+c}(\log \Delta_{Y'})[q+c]) \\
  &\to  h^0 R\sHom_{Y'} (\imath'_* g'^* \Omega_X^q(\log \Delta_X) [q],
  \Omega_{Y'}^{q+c}(\log \Delta_{Y'})[q+c])|_{U_{Y'}}\\
  &\simeq h^0 R\sHom_{U_{Y'}} (\imath'_* g'^* \Omega_X^q(\log \Delta_X)|_{U_{Y'}} [q],
  \Omega_{Y'}^{q+c}(\log \Delta_{Y'})|_{U_{Y'}}[q+c])
    \end{split}
  \end{equation}
  is injective, where on the third line we have applied localization for
  \(\sExt\). Now left-exactness of \(g_* \)  gives an injection
  \begin{equation}
    \label{eq:finally-injective}
    \begin{split}
      &h^0 Rg_* R\sHom_{Y'} (\imath'_* g'^* \Omega_X^q(\log \Delta_X) [q],
      \Omega_{Y'}^{q+c}(\log \Delta_{Y'})[q+c]) \\
      &\to  h^0 Rg_* R\sHom_{U_{Y'}} (\imath'_* g'^* \Omega_X^q(\log \Delta_X)|_{U_{Y'}} [q],
      \Omega_{Y'}^{q+c}(\log \Delta_{Y'})|_{U_{Y'}}[q+c])
    \end{split}
  \end{equation} 
  To complete the proof, we use \eqref{eq:der-adj-bothsides} to identify the
  \emph{map} \eqref{eq:finally-injective} with \eqref{eq:lazy-injection}.
\end{proof}

\begin{corollary}
  \cref{lem:base-change-formula} holds in case \cref{item:pr-or-climm2} of \cref{lem:pr-or-climm}.
\end{corollary}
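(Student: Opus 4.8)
The plan is to obtain the corollary directly from the sheaf-level commutativity of \cref{lem:climms}, reinstating supports and passing to cohomology. The starting observation is that in case \cref{item:pr-or-climm2} the morphism \(f = \imath\) is a closed immersion, hence proper; therefore the Nagata compactification of \cref{lem:pushforward} is trivial---one takes \(\overline{X} = X\) and \(\iota = \mathrm{id}\)---and the pushforward \(f_*\) on log-Hodge cohomology is the one induced by the Grothendieck-duality composition \eqref{eq:pushfor-climm}, which is exactly the left vertical arrow of \eqref{eq:proj-form-climm-case}. The same holds for \(f'_* = \imath'_*\) on \(Y'\) (appearing, after \(Rg_*\), as the right vertical arrow), while the horizontal arrows \(dg'^\vee\) and \(dg^\vee\) induce the pullbacks \(g'^*\) and \(g^*\) of \cref{prop:pullb-pushf-morph}.

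Next I would decorate \eqref{eq:proj-form-climm-case} with supports, giving each scheme its own family (\(\Phi_X\) on \(X\), \(\Phi_Y\) on \(Y\), and likewise on \(X', Y'\)) and upgrading the four arrows to maps of cohomology with supports by means of \cref{prp:coho-with-supp}: the vertical (pushforward) arrows via \cref{item:coho-with-supp2}, since \(f, f'\) are pushing, and the horizontal (pullback) arrows via \cref{item:coho-with-supp1}, since \(g, g'\) are pulling. Writing \(q\) for the form degree of \cref{lem:climms} and letting the cohomological index range freely, taking cohomology then turns the four corners into \(H_{\Phi_X}^\bullet(X, \ologd{X}{q})\), \(H_{\Phi_Y}^\bullet(Y, \ologd{Y}{q+c})\), \(H_{\Phi_{X'}}^\bullet(X', \ologd{X'}{q})\) and \(H_{\Phi_{Y'}}^\bullet(Y', \ologd{Y'}{q+c})\), with the left vertical arrow becoming \(f_*\), the right vertical \(f'_*\), the top horizontal \(g'^*\), and the bottom horizontal \(g^*\). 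Commutativity of the decorated square therefore reads \(g^* f_* = f'_* g'^*\), and summing over all form degrees \(q\) recovers the base-change formula on \(H^*\lsps{X}\).

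The main obstacle is not the geometry---which \cref{lem:climms} has already dispatched---but the verification that the support-decorated square still commutes. Here I would argue that the constructions of \cref{prp:coho-with-supp} are natural transformations, so that the commuting square of sheaf-level maps from \cref{lem:climms} induces a commuting square of supported maps, provided the relevant families are compatible with the cartesian structure. Concretely this amounts to tracking the inclusions of support families across \eqref{eq:cart-diag-climms}---for instance \(\imath(\Phi_X) \subseteq \Phi_Y\) and \(g^{-1}(\Phi_Y) \subseteq \Phi_{Y'}\), together with their primed analogues---and checking that pushforward-with-supports commutes with pullback-with-supports along the square, exactly the formal base-change compatibility of the support functors already exploited in case \cref{item:pr-or-climm1}. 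Granting this, the commutativity propagates from \cref{lem:climms} to the statement with supports, establishing case \cref{item:pr-or-climm2} and hence, via \cref{lem:pr-or-climm}, completing the proof of \cref{lem:base-change-formula}.
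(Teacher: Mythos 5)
Your proposal is correct and takes essentially the same route as the paper, whose entire proof of this corollary is the single sentence ``This follows by applying cohomology with supports to \eqref{eq:proj-form-climm-case}.'' Your write-up merely makes explicit what the paper leaves implicit---that the compactification is trivial because the closed immersion \(\imath\) is already proper, so \eqref{eq:pushfor-climm} \emph{is} the pushforward of \cref{lem:pushforward}, and that decorating the commutative square of \cref{lem:climms} with supports via the natural constructions of \cref{prp:coho-with-supp} and passing to cohomology yields \(g^* f_* = f'_* g'^*\).
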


\begin{proof}
  This follows by applying cohomology with supports to \eqref{eq:proj-form-climm-case}.
\end{proof}

This completes our proof of \cref{lem:base-change-formula}. 

\begin{corollary}[{projection formula, compare with \cite[Prop.
1.1.16]{MR2923726}}]
  \label{cor:proj-formula}
  Let \(f: X \to Y\) be a map of smooth schemes admitting two different
  enhancements to maps of smooth schemes with supports,
  \[ (X, \Delta_X, \Phi_X) \to (Y,\Delta_Y, f(\Phi_X) ) \text{  pushing  and  }
  (X, f^*(\Delta'_Y), f^{-1}(\Phi_Y)) \to (Y, \Delta'_Y , \Phi_Y) \text{  pulling}
  \]
  Assume in addition that \(\Delta_X + f^*(\Delta'_Y)\) and \(\Delta_Y +
  \Delta'_Y\) are (\emph{reduced})
  snc divisors. Then 
  \[ (X, \Delta_X + f^*(\Delta'_Y), \Phi_X\cap f^{-1}(\Phi_Y)) \to (Y,\Delta_Y +
  \Delta'_Y, f(\Phi_X) \cap \Phi_Y )  \] is also a pushing map, and 
  \[ f_* (\beta \smile f^* \alpha) =  f_* \beta \smile \alpha  \in H^*
  (Y,\Delta_Y + \Delta'_Y, f(\Phi_X) \cap \Phi_Y )  \] for any \( \alpha \in H^*
  (Y, \Delta'_Y , \Phi_Y) \) and \(\beta \in  (X, \Delta_X, \Phi_X)\), where
  \(\smile\) is the cup product on log Hodge cohomology defined along the lines
  of \cite[\S 1.1.4, 2.4]{MR2923726}
\end{corollary}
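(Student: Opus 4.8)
The plan is to dispose of the claim that the composite is a pushing map by direct verification, and then to establish the projection formula by reducing, on the interiors, to Chatzistamatiou--R\"ulling's non-logarithmic result \cite[Prop. 1.1.16]{MR2923726}, and promoting the resulting identity to the logarithmic statement via an injectivity argument of the type used in \cref{lem:climms}. For the first claim, since the first enhancement is pushing we have \(f^*\Delta_Y = \Delta_X\), whence \(f^*(\Delta_Y + \Delta'_Y) = \Delta_X + f^*(\Delta'_Y)\) as divisors; together with the snc hypotheses this exhibits \(f\) as a pushing morphism of snc pairs in the sense of \cref{def:push-pull}. The support conditions are immediate: \(\Phi_X \cap f^{-1}(\Phi_Y) \subseteq \Phi_X\), so \(f\) restricted to any member is proper, and for such a member \(C\) properness makes \(f(C)\) closed while the membership \(C \in f^{-1}(\Phi_Y)\) (hence \(C \subseteq \bigcup_i f^{-1}(Z_i)\) for some \(Z_i \in \Phi_Y\), by the description of generated families of supports in \cref{ex:2}) forces \(f(C) \subseteq \bigcup_i Z_i \in \Phi_Y\), so \(f(C) \in \Phi_Y\); thus \(f(\Phi_X \cap f^{-1}(\Phi_Y)) \subseteq f(\Phi_X) \cap \Phi_Y\).

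For the projection formula itself, I would observe that all three operations \(f^*\), \(f_*\), \(\smile\) entering the identity arise by applying cohomology with supports to explicit morphisms in the derived category of coherent sheaves on \(Y\): the pushforward via the compactification and Grothendieck-duality construction following \cref{lem:pushforward}, the pullback via the log differential of \cref{lem:morph-cxs-shvs-diff}, and the cup product via the wedge pairing of log forms. Consequently the equality of classes for \emph{all} \(\alpha, \beta\) follows once one knows that a single diagram of these defining morphisms on \(Y\) commutes. I would then restrict this diagram along the dense open interiors \(U_X = X \setminus \supp(\Delta_X + f^*\Delta'_Y)\) and \(U_Y = Y \setminus \supp(\Delta_Y + \Delta'_Y)\), with \(U_X = f^{-1}(U_Y)\). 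On these interiors both divisors and their pullbacks vanish, the log forms become the ordinary \(\Omega^\bullet\), and the log pushforward of \cref{lem:pushforward} restricts to the non-logarithmic pushforward by construction (since \(\Omega^p_{\overline{X}}(\log \Delta_{\overline{X}})|_{U_X} \simeq \Omega^p_{U_X}\)); thus the restricted diagram is exactly the commuting diagram underlying \cite[Prop. 1.1.16]{MR2923726}.

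The main obstacle is promoting commutativity on the interiors to commutativity on all of \(Y\). This cannot be done at the level of cohomology classes, since restriction \(H^q_{\Phi}(Y, \ologd{Y}{p}) \to H^q(U_Y, \Omega^p_{U_Y})\) is not in general injective (local cohomology classes supported along \(\supp(\Delta_Y + \Delta'_Y)\) restrict to zero); it must instead be carried out at the level of the defining derived-category morphisms, precisely as in the proof of \cref{lem:climms}. Concretely, the two composites forming the diagram are maps into a complex built from the \emph{locally free} sheaves \(\ologd{Y}{\bullet}\), so because the interiors are dense and these targets are torsion-free, a morphism in the relevant \(h^0 R\sHom\) that vanishes after restriction to \(U_Y\) must vanish globally; applying this to the difference of the two composites yields the identity. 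A secondary point to check along the way is that, on the interior, the logarithmic cup-product pairing assembled from the two distinct divisors \(\Delta_X\) and \(f^*(\Delta'_Y)\) specializes to the ordinary wedge product used in \cite{MR2923726}, so that the restricted diagram genuinely is theirs; this is a local computation via \cref{lem:loc-descr-diff-log-poles}.
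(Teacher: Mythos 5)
Your verification that \(f\) remains a pushing morphism for the combined divisors and supports is correct, and your opening move --- replacing a class-by-class argument with commutativity of a universal diagram of defining morphisms --- is the right instinct. The gap is in the promotion step from \(U_Y\) back to \(Y\): the injectivity you invoke fails, and it fails for exactly the reason you yourself give when explaining why class-level restriction is not injective. In the setting of \cref{cor:proj-formula} the morphism \(f\) need not be proper; only \(f|_{\Phi_X}\) is. Consequently the pushforward of \cref{lem:pushforward} cannot even be written down without supports --- it is built from a Nagata compactification, excision \emph{with supports}, and Grothendieck duality --- so any universal diagram encoding \(f_*(\beta \smile f^*\alpha) = f_*\beta \smile \alpha\) necessarily has source built from \(Rf_*R\underline{\Gamma}_{\Phi_X}\bigl(\Omega_X^\bullet(\log(\Delta_X + f^*\Delta'_Y))\bigr)\) and target built from \(R\underline{\Gamma}_{f(\Phi_X)\cap\Phi_Y}\bigl(\Omega_Y^\bullet(\log(\Delta_Y+\Delta'_Y))\bigr)\). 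Neither of these is torsion-free: the cohomology sheaves of \(R\underline{\Gamma}_{\Phi}\) applied to a locally free sheaf are supported on the members of \(\Phi\). If some \(C \in \Phi_X\) satisfies \(f(C) \subseteq \supp(\Delta_Y + \Delta'_Y)\) --- which the hypotheses allow, and which is precisely the kind of situation the support formalism exists to handle --- then the source complex restricts to zero on \(U_Y\), so restriction kills \emph{every} morphism out of it, in particular the two composites you wish to compare (which are nonzero in general). Commutativity of the restricted diagram over \(U_Y\) then carries no information about commutativity over \(Y\), and the argument cannot conclude.

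The reason this style of argument succeeds in \cref{lem:climms} is special to that case: there \(\imath\) is a closed immersion, hence proper, so the diagram \eqref{eq:proj-form-climm-case} involves no supports at all (they are applied only afterwards, functorially, via \cref{prp:coho-with-supp}); and even then the injectivity of \eqref{eq:lazy-injection} is not a generic ``dense open plus torsion-free'' fact, but is proved by identifying \(h^0 R\sHom\) with sections of a locally free sheaf \emph{on \(X'\)} (Koszul resolution, fundamental local isomorphism) and using that \(U_{Y'} \cap X'\) is dense \emph{in \(X'\)}. For a non-proper \(f\) no analogue of this reduction is available. This is why the paper proves \cref{cor:proj-formula} differently: it factors \(f\) through its graph, writes the cup product as pullback along the diagonal of an external product, and applies the base change formula \cref{lem:base-change-formula} to the cartesian square formed by the graph of \(f\) and the diagonal \(\mathrm{id}_Y \times \mathrm{id}_Y\) (a closed immersion transverse to \(f \times \mathrm{id}_Y\)). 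All the genuinely non-proper difficulty is then concentrated in the projection case of \cref{lem:base-change-formula}, which is handled not by restriction to interiors but by the Grothendieck--duality analysis of \cref{lem:shriek-thing} and the trace map, following Chatzistamatiou--R\"ulling; the interior-plus-injectivity trick appears only in the closed immersion case, where it is legitimate. To repair your proof you would need either to carry out that duality analysis directly, or to reduce to \cref{lem:base-change-formula} as the paper does.
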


\begin{proof}
  This is a formal consequence of \cref{lem:base-change-formula} and can be
  derived following the proof of \cite[Prop. 1.1.16]{MR2923726}. Again we use a
  factorization through the graph like
  \begin{equation}
    \small
    \begin{tikzcd}[column sep=small]
      (X, \Delta_X + f^*(\Delta'_Y), \Phi_X\cap f^{-1}(\Phi_Y)) \arrow[d, "\mathrm{id}_X \times \mathrm{id}_X"] \arrow[r, "f"] & (Y,\Delta_Y +    \Delta'_Y, f(\Phi_X) \cap \Phi_Y )  \arrow[dd, "\mathrm{id}_Y \times \mathrm{id}_Y"] \\
      (X \times X, \pr_1^*\Delta_X + \pr_2^* f^*(\Delta'_Y), \Phi_X \times f^{-1}(\Phi_Y)) \arrow[d, "\mathrm{id}_X \times f"] & \\
      (X\times Y, \pr_1^*\Delta_X +\pr_2^*\Delta'_Y,  \Phi_X \times \Phi_Y) \arrow[r, "f \times \mathrm{id}_Y"] & (Y \times Y, \pr_1^*\Delta_Y +\pr_2^*\Delta'_Y, f(\Phi_X) \times \Phi_Y)
    \end{tikzcd}
  \end{equation}
  Here \(f \times \mathrm{id}_Y\) on the bottom is a pushing morphism (since
  \(f|_{\Phi_X}\) is proper and \(f^* \Delta_Y = \Delta_X\)) and the right
  vertical map \(\mathrm{id}_Y \times \mathrm{id}_Y\) is a closed immersion
  transverse to \(f \times \mathrm{id}_Y\) since the outer rectangle is
  cartesian and \(X\) is smooth of the correct codimension. This means we are in
  a situation to apply \cref{lem:base-change-formula}, and that lemma plus the
  definition of cup products in terms of pullbacks along diagonals gives the
  desired identity.
\end{proof}


\section{Correspondences}
\label{sec:correspondences}
Given snc pairs with familes of supports \(\lsps{X}\) and \(\lsps{Y}\) with
dimensions \(d_X\) and \(d_Y\), as in
\cite[\S 1.3]{MR2923726} we may define a family of supports \(P(\Phi_X,
\Phi_Y)\) on \(X \times Y\) by 
\[ \begin{split}
  P(\Phi_X, \Phi_Y) := \{\text{closed subsets } Z \subseteq X \times Y  \, |& \,
\mathrm{pr}_Y|_Z \text{ is proper and for all } W\in \Phi_X, \\
&\mathrm{pr}_Y(\mathrm{pr}_X^{-1}(W) \cap Z) \in \Phi_Y \} \end{split} \] (the
conditions of \cref{def:2} are straightforward to verify). For convenience we
will let \(\Delta_{X\times Y} := \mathrm{pr}_X^* \Delta_X + \mathrm{pr}_Y^*
\Delta_Y\).
\begin{theorem}
  \label{thm:log-hodge-corresp}
  A class \(\gamma \in H^j_{P(\Phi_X, \Phi_Y)}(X \times Y, \ologd{X\times
  Y}{i}(-\mathrm{pr}_X^* \Delta_X))\) defines homomorphisms  
  \[    \cor (\gamma) : H_{\Phi_X}^q(X, \ologd{X}{p}) \to
  H_{\Phi_Y}^{q+j-d_X}(Y, \ologd{Y}{p+i-d_X}) \] by the formula
  \(\cor(\gamma)(\alpha) := \mathrm{pr}_{Y*}(\mathrm{pr}_X^* (\alpha) \smile
  \gamma)\). Moreover if \(\lsps{Z}\) is another snc pair with supports and
  \(\delta \in H^{j'}_{P(\Phi_Y, \Phi_Z)}(Y \times Z, \ologd{Y \times
  Z}{i'}(-\mathrm{pr}_Y^* \Delta_Y))\), then \[\mathrm{pr}_{X \times Z
  *}(\mathrm{pr}_{X \times Y}^* (\gamma) \smile \mathrm{pr}_{Y \times Z}^*
  (\delta)) \in H_{P(\Phi_X,\Phi_Z)}^{j  + j' - d_Y}(X \times Z, \ologd{X \times
  Z}{i + i' - d_Y}(-\mathrm{pr}_X^* \Delta_X)) \text{  and}\]
  \[\cor(\mathrm{pr}_{X \times Z *}(\mathrm{pr}_{X \times Y}^* (\gamma) \smile
  \mathrm{pr}_{Y \times Z}^* (\delta)) ) = \cor(\delta)\circ\cor(\gamma)  \] as
  homomorphisms \(H_{\Phi_X}^q(X, \ologd{X}{p}) \to H_{\Phi_Z}^{q+ j + j' - d_X
  - d_Y}(Z, \ologd{Z}{p+i+i' - d_X - d_Y})\). 
\end{theorem}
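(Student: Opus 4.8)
The plan is to prove the two assertions in turn, deducing the composition law from a diagram chase on the triple product that rests on \cref{lem:base-change-formula} and \cref{cor:proj-formula}. The one genuinely new piece of bookkeeping concerns the zero-twists, which I track via the following local observation (implicit in the heuristic \eqref{eq:integrate-over-x}): twisting \(\Omega^\bullet_{X\times Y}(\log\Delta_{X\times Y})\) by \(-\mathrm{pr}_X^*\Delta_X\) exactly cancels the log poles along \(\mathrm{pr}_X^*\Delta_X\), so that \cref{lem:loc-descr-diff-log-poles} furnishes a canonical inclusion of locally free sheaves \(\Omega^\bullet_{X\times Y}(\log\Delta_{X\times Y})(-\mathrm{pr}_X^*\Delta_X)\hookrightarrow\Omega^\bullet_{X\times Y}(\log\mathrm{pr}_Y^*\Delta_Y)\). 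For the first assertion I would then unwind \(\cor(\gamma)(\alpha)=\mathrm{pr}_{Y*}(\mathrm{pr}_X^*(\alpha)\smile\gamma)\) one step at a time. The flat projection \(\mathrm{pr}_X\) is pulling, so \cref{prop:pullb-pushf-morph} produces \(\mathrm{pr}_X^*\alpha\in H^q(X\times Y,\Omega^p_{X\times Y}(\log\Delta_{X\times Y}))\) supported in \(\mathrm{pr}_X^{-1}(\Phi_X)\); cupping with \(\gamma\) via the wedge pairing lands in \(\Omega^{p+i}(\log\Delta_{X\times Y})(-\mathrm{pr}_X^*\Delta_X)\) with support in \(\mathrm{pr}_X^{-1}(\Phi_X)\cap P(\Phi_X,\Phi_Y)\). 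Applying the inclusion above places this class in \(\Omega^{p+i}(\log\mathrm{pr}_Y^*\Delta_Y)\), on which \(\mathrm{pr}_Y\) is a pushing morphism: the very definition of \(P(\Phi_X,\Phi_Y)\) guarantees \(\mathrm{pr}_Y\) is proper on \(\mathrm{pr}_X^{-1}(W)\cap C\) with image in \(\Phi_Y\) for \(W\in\Phi_X\), \(C\in P(\Phi_X,\Phi_Y)\) (in particular the support lies in \(\mathrm{pr}_{Y*}^{-1}(\Phi_Y)\)). Thus \cref{lem:pushforward} with \(c=-d_X\) yields a class in \(H^{q+j-d_X}_{\Phi_Y}(Y,\Omega^{p+i-d_X}_Y(\log\Delta_Y))\), the claimed target.

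For the composition law I pass to \(W:=X\times Y\times Z\) (writing \(\Delta_W=p_X^*\Delta_X+p_Y^*\Delta_Y+p_Z^*\Delta_Z\)), with factor projections \(p_X,p_Y,p_Z\) and pairwise projections \(p_{XY},p_{YZ},p_{XZ}\). The same recipe identifies the group containing \(\gamma\ast\delta:=p_{XZ*}(p_{XY}^*\gamma\smile p_{YZ}^*\delta)\): the pullbacks place \(p_{XY}^*\gamma\) in \(\Omega^i(\log\Delta_W)(-p_X^*\Delta_X)\) and \(p_{YZ}^*\delta\) in \(\Omega^{i'}(\log\Delta_W)(-p_Y^*\Delta_Y)\), so the cup product lies in \(\Omega^{i+i'}(\log\Delta_W)(-p_X^*\Delta_X-p_Y^*\Delta_Y)\); its \(p_Y\)-poles are cancelled, giving an inclusion into \(\Omega^{i+i'}(\log p_{XZ}^*\Delta_{X\times Z})(-p_X^*\Delta_X)\). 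Since \(p_{XZ}\) is pushing with relative dimension \(d_Y\) and the surviving twist \(-p_X^*\Delta_X=-p_{XZ}^*\mathrm{pr}_X^*\Delta_X\) is pulled back, \cref{lem:pushforward} deposits \(\gamma\ast\delta\) in \(H^{j+j'-d_Y}_{P(\Phi_X,\Phi_Z)}(X\times Z,\Omega^{i+i'-d_Y}(\log\Delta_{X\times Z})(-\mathrm{pr}_X^*\Delta_X))\). The support statement \(p_{XZ}(\supp)\in P(\Phi_X,\Phi_Z)\) is an elementary, if tedious, verification from the definitions of \(P(\Phi_X,\Phi_Y)\) and \(P(\Phi_Y,\Phi_Z)\), exactly paralleling \cite[\S 1.3]{MR2923726}.

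It then remains to prove \(\cor(\gamma\ast\delta)=\cor(\delta)\circ\cor(\gamma)\), and I would do so by showing both maps send \(\alpha\) to \(p_{Z*}(p_X^*\alpha\smile p_{XY}^*\gamma\smile p_{YZ}^*\delta)\). For \(\cor(\delta)\circ\cor(\gamma)\) the key move is to rewrite \(q_Y^*\mathrm{pr}_{Y*}(\mathrm{pr}_X^*\alpha\smile\gamma)\), where \(q_Y\colon Y\times Z\to Y\) is the projection, via \cref{lem:base-change-formula} applied to the cartesian square with vertical pushing maps \(\mathrm{pr}_Y,p_{YZ}\) and horizontal pulling maps \(q_Y,p_{XY}\); since \(\mathrm{pr}_Y\) is a projection this is the projection case (case \cref{item:pr-or-climm1} of \cref{lem:pr-or-climm}), for which \(q_Y\) may be arbitrary (it is in any case flat), so the formula gives \(q_Y^*\mathrm{pr}_{Y*}=p_{YZ*}p_{XY}^*\). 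Compatibility of pullback with cup products together with \(\mathrm{pr}_X\circ p_{XY}=p_X\) turns this into \(p_{YZ*}(p_X^*\alpha\smile p_{XY}^*\gamma)\); the projection formula \cref{cor:proj-formula} then absorbs \(\delta\) as \(p_{YZ}^*\delta\), and \(q_Z\circ p_{YZ}=p_Z\) collapses the two pushforwards to \(p_{Z*}\). For \(\cor(\gamma\ast\delta)\) one instead applies \cref{cor:proj-formula} directly to \(p_{XZ*}\), using \(p_{XZ}^*r_X^*=p_X^*\) and \(r_Z\circ p_{XZ}=p_Z\); the two computations then coincide. I expect the main obstacle to be not this formal identity — which reproduces the non-logarithmic argument of \cite[\S 1.3, 2.4]{MR2923726} essentially verbatim — but the divisor bookkeeping underlying it: \cref{lem:pushforward}, \cref{lem:base-change-formula} and \cref{cor:proj-formula} are all phrased for forms whose poles lie exactly along the pulled-back boundary, so at each step one must check that the pole-zero cancellations place the intermediate classes in the correct twisted log sheaf and that the surviving zero-twists are pullbacks, hence pass through pushforward, base change and the projection formula unharmed.
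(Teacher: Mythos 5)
Your proposal is correct and takes essentially the same route as the paper's own proof: the same pull--cup--push construction via the wedge pairing with pole--zero cancellation, \cref{prop:pullb-pushf-morph} and \cref{lem:pushforward} (with \(c=-d_X\)), the same definition of the composed class on \(X \times Y \times Z\) with the same support verification deferred to \cite[\S 1.3]{MR2923726}, and the same derivation of \(\cor(\delta)\circ\cor(\gamma)=\cor(\gamma\ast\delta)\) by applying \cref{lem:base-change-formula} to the cartesian square of projections over \(Y\) and \cref{cor:proj-formula} to \(\mathrm{pr}_{Y\times Z}\) and \(\mathrm{pr}_{X\times Z}\). The only cosmetic deviations are that you factor the wedge pairing through an explicit inclusion of twisted log sheaves rather than stating it directly into \(\Omega^{p+i}_{X\times Y}(\log \mathrm{pr}_Y^*\Delta_Y)\), and that in the composition argument you invoke base change before, rather than after, the projection formula.
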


\begin{remark}
  The sheaves \(\ologd{X\times Y}{i}(-\mathrm{pr}_X^* \Delta_X)\) are particular
  instances of the sheaves \(\Omega_X^i(A, B)\) appearing in \cite[\S 4.2]{MR894379}.
\end{remark}

Such correspondences involving both log poles and ``log zeroes'' appear to have
been considered before at least in  crystalline cohomology, for example in work
of Mieda \cite{miedaIntegralLogCrystalline,miedaCycleClasses}. However, I was
unable to find any published proof of \cref{thm:log-hodge-corresp} in the
literature. 

\begin{proof}
  We make two observations: first, using \cref{lem:loc-descr-diff-log-poles} there are natural wedge product
  pairings
  \[\ologd{X\times Y}{p} \otimes \ologd{X \times Y}{i}(-\mathrm{pr}_X^*
  \Delta_X) \xrightarrow{\wedge} \Omega_{X \times Y}^{p+i}(\log \Delta_Y)\]
  Second, essentially by the definition of \(P(\Phi_X, \Phi_Y)\) the K\"unneth
  morphism on cohomology for the tensor product \(\ologd{X\times Y}{p} \otimes
  \ologd{X \times Y}{i}(-\mathrm{pr}_X^* \Delta_X)\) can be enhanced with
  supports as
  \[ \begin{split} H_{\mathrm{pr}^{-1}_X(\Phi_X)}^q(X \times Y, \ologd{X\times
    Y}{p}) & \otimes H^j_{P(\Phi_X, \Phi_Y)}(X\times Y , \ologd{X \times
    Y}{i}(-\mathrm{pr}_X^* \Delta_X)) \\
    &\to H_{\Psi}^{p+j}(X \times Y, \ologd{X\times Y}{p} \otimes \ologd{X \times
  Y}{i}(-\mathrm{pr}_X^* \Delta_X)) \end{split} \] where \(\Psi :=
  \pr_{Y*}^{-1}(\Phi_Z)\) (see \cite[\S 1.3.7, Prop. 1.3.10]{MR2923726}). Combining these 2 observations
  gives a pairing
  \[
  \begin{split}
    H_{\mathrm{pr}^{-1}_X(\Phi_X)}^q(X \times Y, \ologd{X\times Y}{p}) & \otimes
    H^j_{P(\Phi_X, \Phi_Y)}(X\times Y , \ologd{X \times Y}{i}(-\mathrm{pr}_X^* \Delta_X)) \\
    & \xrightarrow{\smile} H_{\Psi}^{p+j}(X \times Y, \Omega_{X \times
    Y}^{p+i}(\log \Delta_Y)) 
  \end{split}
  \] Now note that \(\mathrm{pr}_X: (X
  \times Y,  \Delta_{X \times Y}, \mathrm{pr}^{-1}_X(\Phi_X)) \to \lsps{X}\) is
  a pulling morphism, so by \cref{prop:pullb-pushf-morph} there is an induced map
  \(\mathrm{pr}_X^*: H_{\Phi_X}^q(X, \ologd{X}{p}) \to
  H_{\mathrm{pr}^{-1}_X(\Phi_X)}^q(X \times Y, \ologd{X\times Y}{p})\). On the
  other hand since \(\mathrm{pr}_Y: (X \times Y,\Delta_Y, \Psi ) \to \lsps{Y}\)
  is a pushing morphism, \cref{lem:pushforward} provides a  morphism \(\mathrm{pr}_{Y*}:
  H_{\Psi}^{p+j}(X \times Y, \Omega_{X \times Y}^{p+i}(\log \Delta_Y)) \to
  H_{\Phi_Y}^{q+j-d_X}(Y, \ologd{Y}{p+i-d_X}) \). Composing, we obtain the
  desired homomorphism 
  \[ \begin{split} H_{\Phi_X}^q(X, \ologd{X}{p}) &\xrightarrow{\mathrm{pr}_X^*}
    H_{\mathrm{pr}^{-1}_X(\Phi_X)}^q(X \times Y, \ologd{X\times Y}{p})\\
    &\xrightarrow{\smile \gamma} H_{\Psi}^{p+j}(X \times Y, \Omega_{X \times
    Y}^{p+i}(\log \Delta_Y)) \\
    & \xrightarrow{\mathrm{pr}_{Y*}} H_{\Phi_Y}^{q+j-d_X}(Y, \ologd{Y}{p+i-d_X})
  \end{split} \] 
  
  For the ``moreover'' half of the lemma, we again begin with a certain wedge
  product pairing, this time on \(X \times Y \times Z\):
  \begin{equation}
    \label{eq:wedge-on-xyz}
    \begin{split} 
      &\Omega_{X \times Y \times Z}^i(\log \mathrm{pr}_{X \times Y}^* \Delta_{X\times Y})(-\mathrm{pr}_X^* \Delta_X) \otimes \Omega_{X \times Y \times Z}^{i'}(\log \mathrm{pr}_{Y \times Z}^* \Delta_{Y\times Z})(-\mathrm{pr}_Y^* \Delta_Y) \\
      &\xrightarrow{\wedge} \Omega_{X \times Y \times Z}^{i + i'}(\log
      \mathrm{pr}_{X \times Z}^* \Delta_{X \times Z})(-\mathrm{pr}_X^* \Delta_X)
    \end{split}
  \end{equation}
  If \(V \in P(\Phi_X, \Phi_Y), W \in P(\Phi_Y, \Phi_Z)\) then unravelling
  definitions (again we refer to  \cite[\S 1.3.7, Prop. 1.3.10]{MR2923726} for a
  similar claim) we find:
  \begin{itemize}
    \item \(\mathrm{pr}_{X \times Z}|_{\mathrm{pr}_{X \times Y}^{-1}(V) \cap
    \mathrm{pr}_{Y \times Z}^{-1}(W)}\) is proper and
    \item \(\mathrm{pr}_{X \times Z}(\mathrm{pr}_{X \times Y}^{-1}(V) \cap
    \mathrm{pr}_{Y \times Z}^{-1}(W)) \in P(\Phi_X, \Phi_Z)\)
  \end{itemize}
  so that the K\"unneth morphism on cohomology associated to the left hand side
  of \eqref{eq:wedge-on-xyz} can be enhanced with supports like
  \[ 
  \begin{split}
    &H_{\mathrm{pr}_{X \times Y}^{-1}(P(\Phi_X, \Phi_Y))}^j (X \times Y \times Z, \Omega_{X \times Y \times Z}^i(\log
    \mathrm{pr}_{X \times Y}^* \Delta_{X\times Y})(-\mathrm{pr}_X^* \Delta_X))\\
    &\otimes  H_{\mathrm{pr}_{Y \times Z}^{-1}(P(\Phi_Y, \Phi_Z))}^{j'}(X \times Y \times Z, \Omega_{X \times Y \times
    Z}^{i'}(\log \mathrm{pr}_{Y \times Z}^* \Delta_{Y\times
    Z})(-\mathrm{pr}_Y^* \Delta_Y)) \\
    &\to H_\Sigma^{j+j'}(X \times Y \times Z, \Omega_{X \times Y \times Z}^i(\log \mathrm{pr}_{X \times Y}^* \Delta_{X\times Y})(-\mathrm{pr}_X^* \Delta_X) \otimes \Omega_{X \times Y \times Z}^{i'}(\log \mathrm{pr}_{Y \times Z}^* \Delta_{Y\times Z})(-\mathrm{pr}_Y^* \Delta_Y) )
  \end{split}
  \]
  where \(\Sigma :=  \pr_{X \times Z *}^{-1}( P(\Phi_X, \Phi_Z))\). 
  
  Since \(\mathrm{pr}_{X \times Y} : (X \times Y \times Z, \mathrm{pr}_{X \times
  Y}^* \Delta_{X\times Y}, \mathrm{pr}_{X \times Y}^{-1}(P(\Phi_X, \Phi_Y))) \to
  (X \times Y,  \Delta_{X\times Y}, P(\Phi_X, \Phi_Y))\) is a pulling morphism,
  \cref{prop:pullb-pushf-morph} gives an induced morphism 
  \[\ologd{X \times Y}{i} \to Rf_* \Omega_{X \times Y \times Z}^i(\log
  \mathrm{pr}_{X \times Y}^* \Delta_{X\times Y});\] twisting by \(-
  \Delta_{X\times Y}\) and applying the projection formula gives a morphism
  \[\ologd{X \times Y}{i}(- \Delta_{X\times Y}) \to Rf_* \big{(} \Omega_{X
  \times Y \times Z}^i(\log \mathrm{pr}_{X \times Y}^* \Delta_{X\times
  Y})(-\mathrm{pr}_{X \times Y}^* \Delta_{X\times Y}) \big{)} \] and then taking
  cohomology with supports along \(P(\Phi_X, \Phi_Y)\) and using
  \cref{prp:coho-with-supp} gives a modified pullback map 
  \begin{equation}
    \begin{split}
      &H_{P(\Phi_X, \Phi_Y)}^j(X \times Y, \ologd{X \times Y}{i}(- \Delta_{X\times    Y})) \\
      &\to H_{\mathrm{pr}_{X \times Y}^{-1}(P(\Phi_X, \Phi_Y))}^j (X \times Y
    \times Z, \Omega_{X \times Y \times Z}^i(\log \mathrm{pr}_{X \times Y}^*
    \Delta_{X\times Y})(-\mathrm{pr}_X^* \Delta_X))
    \end{split}
  \end{equation}
  and a similar argument gives
  a modified pullback 
  \begin{equation}
    \begin{split}
      &H_{P(\Phi_Y, \Phi_Z)}^{j'}(Y \times Z, \ologd{Y \times Z}{i'}(- \Delta_{Y
  \times Z})) \\
  & \to H_{\mathrm{pr}_{Y \times Z}^{-1}(P(\Phi_Y, \Phi_Z))}^{j'} (X
  \times Y \times Z, \Omega_{X \times Y \times Z}^{i'}(\log \mathrm{pr}_{Y
  \times Z}^* \Delta_{Y \times Z})(-\mathrm{pr}_X^* \Delta_Y))
    \end{split}
  \end{equation}
  On the other
  hand, \(\mathrm{pr}_{X\times Z} : (X \times Y \times Z, \mathrm{pr}_{X \times
  Z}^* \Delta_{X\times Y}, \Sigma) \to (X \times Z,  \Delta_{X \times Z},
  P(\Phi_X, \Phi_Z))\) is a pushing morphism and hence by \cref{lem:pushforward}
  induces morphisms
  \[ R \mathrm{pr}_{X\times Z *} R \underline{\Gamma}_{\Sigma}(\Omega_{X \times
  Y \times Z}^{\dim X \times Y \times Z- k}(\log \mathrm{pr}_{X \times Z}^*
  \Delta_{X\times Y})) \to  R \underline{\Gamma}_{P(\Phi_X, \Phi_Z)} \Omega_{X
  \times Z}^{\dim X \times Z - k}(\log \Delta_{X \times Z})[- \dim Z]  \] for
  all \( k\); twisting by \(-\mathrm{pr}_X^* \Delta_X\) and applying the
  projection formula this becomes
  \begin{equation}
    \begin{split}
      &R \mathrm{pr}_{X\times Z *} R \underline{\Gamma}_{\Sigma}(\Omega_{X \times
  Y \times Z}^{\dim X \times Y \times Z- k}(\log \mathrm{pr}_{X \times Z}^*
  \Delta_{X\times Y})(-\mathrm{pr}_X^* \Delta_X)) \\
  &\to  R
  \underline{\Gamma}_{P(\Phi_X, \Phi_Z)} \Omega_{X \times Z}^{\dim X \times Z -
  k}(\log \Delta_{X \times Z})(-\mathrm{pr}_X^* \Delta_X)[- \dim Z]
    \end{split}
  \end{equation}
  Now
  letting \(k = \dim X \times Y \times Z - i - i'\), the induced morphisms of
  cohomology with supports are
  \begin{equation}
    \begin{split}
      &H_\Sigma^{j+ j'}(X \times Y \times Z, \Omega_{X \times Y \times Z}^{i +
    i'}(\log \mathrm{pr}_{X \times Z}^* \Delta_{X\times Y})(- \mathrm{pr}_X^*
    \Delta_X))\\
    &\to H_{P(\Phi_X, \Phi_Z)}^{j+j' - \dim Z}(X \times Z, \Omega_{X
    \times Z}^{i+i'-\dim Z}(\log \Delta_{X \times Z})(- \mathrm{pr}_X^*
    \Delta_X))
    \end{split}
  \end{equation}
  Combining the above ingredients, we obtain a bilinear pairing 
  \[ 
  \begin{split}
    &H_{P(\Phi_X, \Phi_Y)}^j(X \times Y, \ologd{X \times Y}{i}(- \Delta_{X\times
    Y}))\otimes H_{P(\Phi_Y, \Phi_Z)}^{j'}(Y \times Z, \ologd{Y \times Z}{i'}(- \Delta_{Y    \times Z})) \\ &\to H_{P(\Phi_X,
    \Phi_Z)}^{j+j' - \dim Z}(X \times Z, \Omega_{X \times Z}^{i+i'-\dim Z}(\log \Delta_{X \times Z})(- \mathrm{pr}_X^* \Delta_X))
  \end{split}  
  \] sending \( \gamma \otimes \delta \mapsto \mathrm{pr}_{X \times Z
  *}(\mathrm{pr}_{X \times Y}^* (\gamma) \smile \mathrm{pr}_{Y \times Z}^*
  (\delta))\). It remains to be seen that 
  \[  \cor(\mathrm{pr}_{X \times Z *}(\mathrm{pr}_{X \times Y}^* (\gamma) \smile
  \mathrm{pr}_{Y \times Z}^* (\delta)) ) = \cor(\delta) \circ \cor(\gamma) \]
  and for this we will make repeated use of \cref{lem:base-change-formula}. Consider
  the diagram of smooth schemes
  \[
    \begin{tikzcd}
      && X \times Y \times Z \arrow[ld] \arrow[rd] \arrow[dd, phantom, "\ast"] && \\
      & X \times Y  \arrow[ld] \arrow[rd] && Y \times Z \arrow[ld] \arrow[rd] &\\
      X && Y && Z
    \end{tikzcd}  
  \]
  where all morphisms are projections. There are various ways to enhance this to
  include supports; here we add the family of supports \(\Psi\) on \(X \times
  Y\) defined above. Then in the cartesian diagram \((\ast)\),
  \(\mathrm{pr}_{Y}: (X \times Y, \Psi) \to (Y, \Phi_Y)\) and \( \mathrm{pr}_{Y
  \times Z}: (X \times Y \times Z , \mathrm{pr}_{X \times Y}^{-1}\Psi) \to (Y
  \times Z, \mathrm{pr}_Y^{-1}\Phi_Y) \) are pushing morphisms, whereas
  \(\mathrm{pr}_{X \times Y}\) and \(\mathrm{pr}_Y\) are pulling morphisms. At
  the same time, we have a pulling morphism \(\mathrm{pr}_{X \times Z}: (X
  \times Y \times Z , \mathrm{pr}_{X \times Z}^{-1}(P(\Phi_Y, \Phi_Z))) \to (Y
  \times Z, P(\Phi_Y, \Phi_Z)) \). To be precise in what follows, whenever
  ambiguity is possible we will use
  notation like \(\mathrm{pr}^{X \times Y}_X\) to denote the projection \(X
  \times Y \to X\), \(\mathrm{pr}^{X \times Y \times Z}_X\) to denote the
  projection \(X \times Y \times Z \to X\) and so on.
  
  Applying \cref{cor:proj-formula} first to \(\mathrm{pr}_{X \times Z}\) we see
  that 
  \[ \mathrm{pr}_{Y\times Z *}(\mathrm{pr}_{X \times Y}^*(\mathrm{pr}_X^{X\times
  Y*} \alpha \smile \gamma) \smile \mathrm{pr}_{Y \times Z}^* \delta) =
  \mathrm{pr}_{Y\times Z*}(\mathrm{pr}_{X \times Y}^*(\mathrm{pr}_X^{X\times Y*}
  \alpha \smile \gamma)) \smile \delta \] and then applying \cref{lem:base-change-formula} to \((\ast)\) shows \[\mathrm{pr}_{Y\times Z*}(\mathrm{pr}_{X \times
  Y}^*(\mathrm{pr}_X^{X\times Y*} \alpha \smile \gamma)) =
  \mathrm{pr}_Y^{Y\times Z*} (\mathrm{pr}^{X \times
  Y}_{Y*}(\mathrm{pr}_X^{X\times Y*} \alpha \smile\gamma)) =
  \mathrm{pr}_Y^{Y\times Z*} \cor(\gamma)(\alpha)\] so that
  \[\mathrm{pr}_{Y\times Z *}(\mathrm{pr}_{X \times Y}^*(\mathrm{pr}_X^{X\times
  Y*} \alpha \smile \gamma) \smile \mathrm{pr}_{Y \times Z}^* \delta)
  =\mathrm{pr}_Y^{Y\times Z*} \cor(\gamma)(\alpha)\smile \delta  \] Applying
  \(\mathrm{pr}^{Y\times Z}_{Z *}\) we conclude that 
  \begin{equation}
    \label{eq:comp-of-corresp}
    \cor \delta  (\cor
    \gamma)(\alpha)) = \mathrm{pr}^{X\times Y \times Z}_{Z*}(\mathrm{pr}_X^{X\times  Y \times Z*} \alpha \smile \mathrm{pr}_{X \times Y}^* \gamma \smile   \mathrm{pr}_{Y \times Z}^* \delta) 
  \end{equation}
  Finally, we rewrite the right hand side
  as 
  \[ \mathrm{pr}^{X\times Z}_{Z*} \mathrm{pr}_{X \times Z*}(\mathrm{pr}_{X\times
  Z}^*\mathrm{pr}^{X\times Z *}_X \alpha \smile \mathrm{pr}_{X \times Y}^*
  \gamma \smile \mathrm{pr}_{Y \times Z}^* \delta) \] and apply
  \cref{cor:proj-formula} to \(\mathrm{pr}_{X \times Z}\) (with the pushing
  morphism \((X \times Y \times Z , \Sigma) \to (X \times Z,  P(\Phi_X,
  \Phi_Z))\) and pulling morphism \((X \times Y \times Z , \mathrm{pr}^{X\times
  Y \times Z -1}_{X}(\Phi_X)) \to (X \times Z,  \mathrm{pr}^{X \times
  Z-1}_{X}(\Phi_X))\)) to arrive at \[  \mathrm{pr}_{X \times
  Z*}(\mathrm{pr}_{X\times Z}^*\mathrm{pr}^{X\times Z *}_X \alpha \smile
  \mathrm{pr}_{X \times Y}^* \gamma \smile \mathrm{pr}_{Y \times Z}^* \delta) =
  \mathrm{pr}^{X\times Z *}_X \alpha \smile\mathrm{pr}_{X \times Z*}(
  \mathrm{pr}_{X \times Y}^* \gamma \smile \mathrm{pr}_{Y \times Z}^* \delta) \]
  Applying \(\mathrm{pr}^{X\times Z}_{Z*} \) on both sides shows that the right
  hand side of \eqref{eq:comp-of-corresp} is \(\cor(\mathrm{pr}_{X \times Z*}(
  \mathrm{pr}_{X \times Y}^* \gamma \smile \mathrm{pr}_{Y \times Z}^*
  \delta)(\alpha)\), as desired.
\end{proof}

\begin{remark}
  There is a Grothendieck-Serre dual approach to such correspondences, where
classes  \(\gamma \in H^j_{P(\Phi_X, \Phi_Y)}(X \times Y, \ologd{X\times
Y}{i}(-\mathrm{pr}_Y^* \Delta_Y))\) define homomorphisms  \[H^q(X,
\ologd{X}{p}(- \Delta_X)) \to H^{q+j-d_X}(Y,
\ologd{Y}{p+i-d_X}(- \Delta_Y)).\] The construction is formally similar.
\end{remark}

\printbibliography

\appendix

\section{Attempts to construct a fundamental class of a thrifty birational equivalence}
\label{sec:attempts}

As mentioned in \Cref{sec:intro} inspiration for this work was the following
remarkable theorem of Chatzistamatiou-R\"ulling:
\begin{theorem}[{\cite[Thm. 3.2.8]{MR2923726} (see also \cite[Thm. 1.1]{MR3427575}, \cite[Thm. 1.6]{kovacsRationalSingularities2020})}]
    \label{thm:cr-hdi-struct}
    Let \(k\) be a perfect field and let \(S\) be a scheme. Suppose \(X\) and
    \(Y\) are two separated, finite type \(k\)-schemes which are
    \begin{enumerate}
    \item smooth over \(k\) and 
    \item \textbf{properly birational} over \(S\) in the sense that
      there is a commutative diagram
      \begin{equation}
        \label{eq:1}
        \begin{tikzcd}
          & Z \arrow[dl, "r"'] \arrow[dr, "s"]  & \\
          X \arrow[dr, "f"']  \arrow[rr, phantom, "\circlearrowleft"] &   & Y \arrow[dl, "g"] \\
          & S & \\
        \end{tikzcd}
      \end{equation}
      with \(r\) and \(s\) proper birational morphisms.  
    \end{enumerate}
    Let \(n = \dim X = \dim Y = \dim Z\). Then, there are isomorphisms of sheaves
    \begin{equation}
      \label{eq:cr-correspondence}
       R^{i} f_{*} \sO_X \isom  R^{i}
      g_{*} \sO_Y  \text{  and  } R^{i} f_{*} \omega_X \isom  R^{i}
      g_{*} \omega_Y \text{  for all  } i, 
    \end{equation}
\end{theorem}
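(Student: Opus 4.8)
The plan is to realize both isomorphisms as correspondences attached to the cycle $Z$, applying the formalism of \cref{thm:log-hodge-corresp} in the degenerate case where all divisors $\Delta$ are zero (so the log Hodge cohomology becomes ordinary Hodge cohomology, recovering the setup of \cite{MR2923726}). First I would equip $X$, $Y$ and $Z$ with families of supports adapted to the structure maps to $S$, concretely taking $\Phi_X = f^{-1}_*(\Phi)$ for a family $\Phi$ on $S$ and likewise for $Y,Z$, so that the statement about the sheaves $R^i f_*$ and $R^i g_*$ is governed by cohomology with supports and the proper morphisms $r,s$ become pushing morphisms. Since $r$ and $s$ are proper, the graph $(r,s)\colon Z \to X\times Y$ is a closed immersion whose image is proper over both factors; hence its cycle class $[Z]_{XY}\in H^n_{P(\Phi_X,\Phi_Y)}(X\times Y, \Omega^n_{X\times Y})$ is a legitimate correspondence, and $\cor([Z]_{XY})$ acts by the bidegree bookkeeping of \cref{thm:log-hodge-corresp}. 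On the two extreme Hodge pieces this yields candidate maps $H^i_{\Phi_X}(X,\strshf{X}) \to H^i_{\Phi_Y}(Y,\strshf{Y})$ and $H^i_{\Phi_X}(X,\canshf{X}) \to H^i_{\Phi_Y}(Y,\canshf{Y})$; because the pushforward of \cref{lem:pushforward} is defined already at the level of complexes over the base, these refine to morphisms $Rf_*\strshf{X}\to Rg_*\strshf{Y}$ and $Rf_*\canshf{X}\to Rg_*\canshf{Y}$ in $D(S)$, whose $R^i$ are the desired sheaf maps.

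The transpose cycle $(s,r)\colon Z\to Y\times X$ supplies correspondences in the reverse direction, and the task reduces to showing the two composites are identities. Here I would invoke the composition law of \cref{thm:log-hodge-corresp}: the composite $\cor([Z]_{YX})\circ\cor([Z]_{XY})$ is computed by the cycle $\pr_{13*}(\pr_{12}^*[Z]_{XY}\smile \pr_{23}^*[Z]_{YX})$ inside $X\times Y\times X$, which is carried by the fibre product $Z\times_Y Z$. Since $s$ is birational, $Z\times_Y Z$ contains a distinguished diagonal component isomorphic to $Z$, and (as $r$ is surjective) its image under $\pr_{13}$ is exactly the diagonal $\Delta_X\subseteq X\times X$ with multiplicity one; this component contributes $[\Delta_X]$, whose correspondence is the identity. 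The remaining components lie over the exceptional locus $\mathrm{Exc}(s)\subseteq Z$, and so map into a product of proper closed subsets of $X$.

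The hard part will be showing that these exceptional correction terms act as \emph{zero} on the two extreme pieces $\strshf{X}$ and $\canshf{X}$ --- this is precisely where the restriction to $p\in\{0,n\}$ is indispensable, and where intermediate Hodge cohomology fails to be a birational invariant (a blow-up already alters $h^{1,1}$). For the canonical piece $\canshf{X}=\Omega^n_X$ the mechanism is that the pullback of a top-degree form along a birational morphism of smooth varieties vanishes to positive order along every exceptional divisor, because the Jacobian $\det(dr)$ cuts out the (effective) discrepancy; the wedge defining the correction correspondence therefore restricts to zero on the exceptional components, so the correction annihilates $H^i(X,\canshf{X})$. For the structure-sheaf piece I would deduce the matching vanishing by relative Grothendieck--Serre duality, which interchanges $\strshf{X}$ and $\canshf{X}$ and, by construction of the pushforward in \cref{lem:pushforward}, is compatible with the correspondence action (the trace-pushforward on $\canshf{}$ being dual to pullback on $\strshf{}$); alternatively one argues directly that $r_*\strshf{Z}=\strshf{X}$, valid as $r$ is birational of degree one onto the normal $X$, so the $\Omega^0$-contribution of the correction collapses.

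Granting this vanishing, $\cor([Z]_{YX})\circ\cor([Z]_{XY})$ equals $\cor([\Delta_X])=\mathrm{id}$ on both extreme pieces, and symmetrically the reverse composite equals $\cor([\Delta_Y])=\mathrm{id}$; hence the two correspondences are mutually inverse. Taking $R^i$ of the resulting isomorphisms in $D(S)$ yields $R^i f_*\strshf{X}\cong R^i g_*\strshf{Y}$ and $R^i f_*\canshf{X}\cong R^i g_*\canshf{Y}$ for all $i$, as claimed. The two points demanding the most care in a full write-up are the intersection-theoretic identification of the diagonal component of $Z\times_Y Z$ with multiplicity one, and the verification that the support-theoretic correspondence action is compatible with relative duality over $S$ so that the sheaf-level conclusion is genuinely local on the base.
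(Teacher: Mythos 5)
First, a point of orientation: the paper does not prove \cref{thm:cr-hdi-struct} at all --- it quotes the theorem from \cite{MR2923726}, and \Cref{sec:attempts} only recalls the shape of the original argument (a cycle morphism \(\cl : \CH^* \to H^*(-,\Omega^*)\) compatible with correspondences \emph{and} with certain filtrations). So your proposal must be judged against that recalled strategy and against what the present paper actually supplies, which is only the correspondence calculus of \cref{thm:log-hodge-corresp}. Measured that way, your outline has the right skeleton (it is Chatzistamatiou--R\"ulling's skeleton), but two of its load-bearing steps are not justified. You treat the class \([Z]_{XY} \in H^n_{P(\Phi_X,\Phi_Y)}(X \times Y, \Omega^n_{X\times Y})\), the identity \(\cor([\Delta_X]) = \id\), and the statement that the composite correspondence is represented by the Chow-level composite \(\pr_{13*}(\pr_{12}^*[Z]\cdot \pr_{23}^*[Z^t]) = [\Delta_X] + E\) as if they came for free from the graph being a closed immersion proper over both factors. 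In fact, constructing a cycle class map into Hodge cohomology \emph{with supports} that is compatible with pullback, cup product and pushforward --- hence with composition of correspondences --- is the technical core of \cite{MR2923726}; \cref{thm:log-hodge-corresp} provides none of it, and the entire point of \Cref{sec:obstructions} is that the analogous fundamental-class construction is exactly what breaks down in the logarithmic setting. As written, your proof assumes its main input.

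The more serious gap is the vanishing of the correction term \(E\). The cycle \(E \subset X \times X\) is \emph{not} the graph of a birational morphism, so its correspondence action is not computed by pulling back forms along \(r\) or \(s\); the Jacobian/discrepancy heuristic therefore has nothing to act on. Likewise \(r_*\strshf{Z} = \strshf{X}\) (true, by Zariski's main theorem) is a statement about \(r\), not about \(E\), and gives no control of \(\cor(\cl(E))\); and the appeal to duality presupposes the compatibility it is meant to prove. Note that \(\cl(E)\) lies in the \emph{same} bidegree \((n,n)\) as \([\Delta_X]\), so no amount of bidegree bookkeeping can kill it. What actually kills it --- the ``second key property'' recalled in \Cref{sec:attempts} --- is the compatibility of \(\cl\) with the codimension filtration on \(\CH^n(X\times X)\) and the K\"unneth filtration on \(H^n(X\times X, \Omega^n_{X\times X})\): since \emph{both} projections of \(E\) have codimension \(\geq 1\), every K\"unneth component of \(\cl(E)\) has form degree \(\geq 1\) (hence also \(\leq n-1\)) on each factor. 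Cupping with \(\pr_1^*\alpha\) and applying \(\pr_{2*}\) then vanishes for degree reasons: for \(\alpha \in H^q(X,\strshf{X})\) the first-factor form degree of the product is \(< n\), so it dies under \(\pr_{2*}\), while for \(\alpha \in H^q(X,\canshf{X})\) it is \(> n\), so the product is already zero. Without establishing this filtration compatibility for your cycle classes, the composites cannot be identified with the identity, and the proof does not close.
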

This result implies, for example, that if \(S\) is a variety over a perfect
field \(k\) with a \textbf{rational resolution}, that is, a resolution of
singularities \(f: X \to S\) such that \(Rf_* \strshf{X} = \strshf{S}\), then
every other resolution \(g: Y \to S\) satisfies \(Rg_* \strshf{Y} = \strshf{S}\)
and is hence also rational. In characteristic 0 this was a corollary of
Hironaka's resolution of singularities \cite{MR0199184}; in positive
characteristic it remained open until 2011. 

The original proof in \cite[Thm. 3.2.8]{MR2923726} makes use of a cycle morphism
\(\mathrm{cl}: CH^*(X) \to H^*(X, \Omega^*_X)\) from Chow cohomology to Hodge
cohomology, which is ultimately applied to a cycle \(Z\subset X \times Y\)
obtained from a proper birational equivalence. That cycle morphism satisfies 2
essential properties: the first is that it is compatible with \emph{correspondences}:
here Chow correspondences are homomorphisms \[ CH^*(X) \to CH^*(Y) \text{  of
the form  } \alpha \mapsto \mathrm{pr}_{Y*} (\mathrm{pr}_X^* \alpha \smile
\gamma) \text{  for some  } \gamma \in CH^*(X \times Y)  \] where \(\smile\) is
the cup product induced by intersecting cycles; Hodge correspondences are
defined in a similar way. The second key property is a compatibility with the
filtrations \[CH^n(X \times Y) = F^0 CH^n(X \times Y)  \supseteq F^1 CH^n(X
\times Y) \supseteq \cdots \supseteq F^{\dim Y} CH^n(X \times Y) \supseteq 0\]
where \(F^c CH^n(X \times Y) \) is the subgroup generated by cycles \(Z
\subseteq X \times Y\) such that \( \codim(\mathrm{pr}_{Y} Z \subseteq Y) \geq
c\), and 
\[H^n(X \times Y, \Omega_{X\times Y}^m) = F^0 H^n(X \times Y, \Omega_{X\times
Y}^m)  \supseteq F^1 CH^*(X \times Y) \supseteq \cdots \supseteq F^{\dim Y}
H^n(X \times Y, \Omega_{X\times Y}^m) \supseteq 0\] where \(F^c H^n(X \times Y,
\Omega_{X\times Y}^m) \) is the image of the map \(H^n(X \times Y, \oplus_{j =
c}^m \Omega_X^{m -j} \boxtimes \Omega_Y^j) \to H^n(X \times Y, \Omega_{X\times
Y}^m) \) coming from the K\"unneth decomposition.

It is natural to ask if a similar method can be applied to prove an analogue of
\cref{thm:cr-hdi-struct} for \emph{pairs}, which might read something like
\cref{conj:hdi-log-struct} below. In order to state this analogue, we require a
few additional definitions.
For the remainder of
this appendix we work over a fixed perfect field \(k\).
\begin{definition}[{slightly simplified version of \cite[Def.
  1.5]{MR3057950}}]
  \label{def:pair}
  A \textbf{pair \(\lsp{X}\)} over \(k\) will mean
  \begin{itemize}
    \item a reduced, equidimensional and \(S_2\) scheme \(X\) of finite type
    over \(k\) admitting a dualizing complex , together with
    \item a \(\QQ\)-Weil divisor \(\Delta_{X} = \sum_i a_i D_i\) on \(X\) such
    that no irreducible component \(D_i \) of \(\Delta_X\) is contained in
    \(\Sing(X)\).
  \end{itemize}   
\end{definition}
\begin{definition}
  \label{def:abstract-strata-intro}
  A \emph{stratum} of a simple normal crossing pair \((X, \Delta_X = \sum_i
  D_i)\) is a connected (equivalently, irreducible) component of an intersection
  \(D_J = \cap_{j \in J}D_j\).
\end{definition}
Given any pair \(\lsp{X}\), there is a largest open set \(U \subseteq X \) such
that  \((U, \Delta_X |_U)\) is a simple normal crossing pair, and we will refer
to the resulting simple normal crossing pair as \(\snc \lsp{X} := (U, \Delta_X
|_U)\).
\begin{definition}[
  {compare with \cite[Def. 2.79-2.80]{MR3057950}, \cite[\S 1,
  discussion before Def. 10]{MR3539921}}
  ]
  \label{def:thriftiness}
  Let \((S, \Delta_S = \sum_i D_i)\) be a pair,
  and assume 
  \(\Delta_S\) is reduced and effective. A separated, finite type birational
  morphism \(f: X \to S\) is \emph{thrifty with respect to} \(\Delta_S \) if
  and only if 
  \begin{enumerate}
    \item \label{item:over-gen-pts-intro} \(f\) is an isomorphism over the
    generic point of every stratum of \(\snc(S, \Delta_S)\) and
    \item \label{item:at-gen-pts-intro} letting \(\tilde{D}_i = f^{-1}_*D_i\)
    for \(i = 1, \dots, N\) be the strict transforms of the divisors \(D_i\),
    and setting \(\Delta_X := \sum_i \tilde{D}_i\), the map \(f\) is an
    isomorphism at the generic point of every stratum of \(\snc(X,
    \Delta_X)\).
  \end{enumerate}
\end{definition}
\begin{conjecture}
    \label{conj:hdi-log-struct}
    Let  \(k\) be a perfect field, let \(S\) be a scheme and let \(\lsp{X}\) and
    \(\lsp{Y}\) be simple normal crossing pairs over \(k \). Suppose \(\lsp{X}
    \) and \( \lsp{Y}\) are properly birational over \(S\) in the sense that
    there is a commutative diagram
    \begin{equation}
      \label{eq:4}
      \begin{tikzcd}
        & \lsp{Z} \arrow[dl, "r"'] \arrow[dr, "s"]  & \\
        \lsp{X} \arrow[dr, "f"']  \arrow[rr, phantom, "\circlearrowleft"] &   & \lsp{Y} \arrow[dl, "g"] \\
        & S & \\      
      \end{tikzcd}
    \end{equation}
    where \(r\), \(s\) are proper and birational morphisms, and assume \(
    \Delta_{Z} = r_{*}^{-1}\Delta_{X} = s_{*}^{-1} \Delta_{Y} \).
    If \(r\) and \(s\) are \emph{thrifty}, then there are quasi-isomorphisms
    \begin{equation}
      \label{eq:5}
      Rf_* \lstrshf{X} \simeq Rg_* \lstrshf{Y} \text{  and  } Rf_* \lcanshf{X} \simeq Rg_* \lcanshf{Y}.
    \end{equation}
\end{conjecture}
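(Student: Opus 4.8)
The plan is to transcribe the proof of \cref{thm:cr-hdi-struct} into the logarithmic setting, using the correspondence formalism of \cref{thm:log-hodge-corresp} in place of the non-logarithmic Hodge correspondences of Chatzistamatiou--R\"ulling. Write $n = \dim X = \dim Y = \dim Z$, let $W \subseteq X \times Y$ be the image of the roof $(r,s)\colon Z \to X \times Y$ (the closure of the graph of $X \dashrightarrow Y$), and let $W^t \subseteq Y \times X$ be its transpose. On each of $X$, $Y$ take the family of supports consisting of closed sets proper over $S$, so that the correspondences below will be compatible with the relative pushforwards $Rf_*$, $Rg_*$. Since $\ologd{X}{0}(-\Delta_X) = \lstrshf{X}$ and $\ologd{X}{n} = \lcanshf{X}$, a fundamental class
\[ \gamma \in H^{n}_{P(\Phi_X,\Phi_Y)}\bigl(X \times Y,\ \ologd{X \times Y}{n}(-\mathrm{pr}_X^*\Delta_X)\bigr) \]
supported on $W$ induces, via \cref{thm:log-hodge-corresp} with $p = i = j = n$, a correspondence $\cor(\gamma)\colon H^q(X,\lcanshf{X}) \to H^q(Y,\lcanshf{Y})$; the companion class with log zeroes along $\mathrm{pr}_Y^*\Delta_Y$, read through the Grothendieck--Serre dual formalism recorded after \cref{thm:log-hodge-corresp}, induces the map $H^q(X,\lstrshf{X}) \to H^q(Y,\lstrshf{Y})$ (now $p=0$, $i = j = n$). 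The first step is thus to construct these classes together with their transposes $\gamma^t$ on $Y \times X$, all representing the fundamental class of $W$ with the prescribed logarithmic poles and zeroes.

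The second step is the composition argument. Applying the ``moreover'' half of \cref{thm:log-hodge-corresp} with third space $X$ identifies $\cor(\gamma^t)\circ\cor(\gamma)$ with $\cor$ of the product class
\[ \mathrm{pr}_{X \times X *}\bigl(\mathrm{pr}_{X \times Y}^*\gamma \smile \mathrm{pr}_{Y \times X}^*\gamma^t\bigr) \in H^{n}_{P(\Phi_X,\Phi_X)}\bigl(X \times X,\ \ologd{X \times X}{n}(-\mathrm{pr}_X^*\Delta_X)\bigr). \]
One then wishes to show that this equals the fundamental class of the diagonal (which induces the identity) modulo a correction supported on a closed $T \subseteq X \times X$ both of whose projections to $X$ have image of dimension $< n$. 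To kill the correction one equips the group above with the two logarithmic K\"unneth filtrations by first- and second-factor form degree: exactly as in the non-logarithmic case, the pushforward $\mathrm{pr}_{X \times X *}$ annihilates every K\"unneth component of a correspondence except $\lstrshf{X} \boxtimes \lcanshf{X}$ when acting on $H^*(X,\lcanshf{X})$, and except $\lcanshf{X} \boxtimes \lstrshf{X}$ when acting on $H^*(X,\lstrshf{X})$. Thriftiness of $r$ and $s$ — which forces $(r,s)$ to be an isomorphism over the generic point of every stratum of $\snc\lsp{X}$ and $\snc\lsp{Y}$, together with the compatibility $\Delta_Z = r_*^{-1}\Delta_X = s_*^{-1}\Delta_Y$ — is designed precisely to confine $T$ to codimension $\geq 1$ under each projection, hence the correction to deeper filtration. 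Granting this, $\cor(\gamma^t)\circ\cor(\gamma) = \mathrm{id}$ and symmetrically $\cor(\gamma)\circ\cor(\gamma^t) = \mathrm{id}$; since the correspondences are defined at the level of complexes (via $R\underline{\Gamma}_\Phi$), the mutually inverse correspondences upgrade to the quasi-isomorphisms $Rf_*\lstrshf{X} \simeq Rg_*\lstrshf{Y}$ and $Rf_*\lcanshf{X} \simeq Rg_*\lcanshf{Y}$ of \cref{eq:5}.

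The hard part — and the reason this remains a conjecture — is the very first step: the honest construction of the class $\gamma$ with log \emph{zeroes} along $\mathrm{pr}_X^*\Delta_X$. In the non-logarithmic theory this class is manufactured by the cycle morphism $\mathrm{cl}\colon CH^*(X\times Y) \to H^*(X\times Y,\Omega^*_{X\times Y})$ applied to $[W]$, but, as noted in \cref{sec:intro}, no cycle morphism into logarithmic Hodge cohomology is presently available, there being as yet no mature Chow cohomology of pairs. One is therefore forced to build $\gamma$ by hand, e.g.\ from a Gysin/trace map associated to $Z \to W \inj X \times Y$ through the non-smooth compactifications of \cref{sec:func-prop-log-hodge}; but for a cycle $W$ that is neither snc nor transverse to $\Delta_{X\times Y}$ it is far from clear that such a construction yields a class with log zeroes, rather than merely log poles, along $\mathrm{pr}_X^*\Delta_X$. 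Worse, even granting a candidate $\gamma$, the filtration step of the second paragraph demands a genuinely logarithmic K\"unneth decomposition fine enough to see the divisors, and the translation of the thrifty hypothesis into the statement that the correction lands in deeper filtration is exactly where a naive transcription of the Chatzistamatiou--R\"ulling argument breaks down: the log structure can contribute to the correspondence along $\mathrm{pr}_X^*\Delta_X$ in a way that the underlying-cycle bookkeeping does not obviously control.
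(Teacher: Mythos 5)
The statement you were asked to prove is, in the paper, a \emph{conjecture} (\cref{conj:hdi-log-struct}): the paper contains no proof of it, and \cref{sec:attempts} exists precisely to explain why the strategy you outline --- transcribing the Chatzistamatiou--R\"ulling argument for \cref{thm:cr-hdi-struct} into the logarithmic setting via the correspondences of \cref{thm:log-hodge-corresp} --- breaks down. Your proposal is consistent with this: you never actually supply the class $\gamma$, and your third paragraph concedes that its construction is the missing step. So what you have written is a strategy outline with a genuine, named gap, not a proof, and the honest conclusion is that the statement remains open by this route.

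Where the paper goes further than you do is in making the obstruction concrete, and this is worth recording because it shows the gap is not merely ``we lack a cycle morphism into log Hodge cohomology'' but that the natural candidate class \emph{does not exist}. In \cref{sec:obstructions} the paper runs the Serre-duality recipe for the fundamental class of the graph cycle $Z \subseteq X \times Y$ of a thrifty birational equivalence: any class in $H^d(X\times Y, \ologd{X\times Y}{d}(-\mathrm{pr}_X^*\Delta_X))$ produced this way is manufactured from a linear functional on $H^d(Z, \omega_Z(E_X^{\textup{red}} + (E_Y^{\textup{red}} - E_Y)))$ --- the analogue of the Serre-duality trace --- where $E_X$, $E_Y$ are the exceptional-type divisors of the two projections restricted to $Z$. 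But whenever $E_X$ and $E_Y$ are reduced and $E_X \neq 0$, this group equals $H^d(Z,\omega_Z(E_X))$, which vanishes by the weak Kodaira-type vanishing lemma in the appendix (Serre-dually, $H^0(Z,\sO_Z(-E_X)) = 0$). The paper's explicit example --- $X = \PP^2$, $\Delta_X = L$ a line, $Y = \Bl_p X$ with $p \in L$, $\Delta_Y = \tilde L$ --- realizes exactly this situation, so the only class the recipe produces is zero: the log zeroes that \cref{thm:log-hodge-corresp} forces on $\gamma$ kill the very group that should contain the trace. Your worry about the logarithmic K\"unneth/filtration step is also legitimate, but the argument never gets that far. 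Finally, note the paper's remark that a statement almost identical to \cref{conj:hdi-log-struct} is in fact proved in \cite{GodfreyHDI}, by entirely different, non-correspondence methods --- so the conjecture is expected to be true, just not provable by the approach you (and originally the author) attempted.
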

Following \cite{MR2923726} closely, one might begin by replacing the ordinary
sheaves of differentials \(\Omega_X\) appearing in Hodge cohomology with sheaves
of differentials with log poles \(\Omega_X(\log \Delta_X)\) and attempt to
implement a similar strategy, i.e. starting a cycle \(Z \subset X \times Y \)
representing a thrifty proper birational equivalince, producing a correspondence
in logarithmic Hodge cohomology and analyzing its properties. 

Ultimately even the correspondences of \Cref{sec:correspondences} seem to be
insufficient to deal with thrifty proper birational equivalences, as we
illustrate in \Cref{sec:obstructions} below. The problem we encounter is elementary:
looking at the recipe for the Hodge class \(\cl(Z)\) of a subvariety \(Z \subseteq
X\), where \(Z\) and \(X\) are smooth an projective (outlined in \cite[Ex.
III.7.4]{MR0463157}), we see that \(\cl(Z)\) ultimately comes from the trace
linear functional \(\tr : H^{\dim Z}(Z, \omega_Z) \to k \), or Serre-dually the
element \(1 \in H^0(Z, \sO_Z)\). Due to the introduction of log poles and zeroes
in \Cref{sec:correspondences}, trying to follow that recipe we pass through
cohomology groups of the form  \(H^{\dim Z}(Z, \omega_Z(D))\), or dually
\(H^0(Z, \sO_Z(-D))\) where \(D\) is an (often non-0 in cases of interest)
effective Cartier divisor on \(Z\), and so there simply is no ``\(1\)'' to be
had. 

Beyond the difficulties described in the previous paragraph, when attempting to
formulate a logarithmic variant of Chatzistamatiou-R\"ulling's cycle morphism
argument one is hampered by the fact that we are still in the early days of
logarithmic Chow theory 
. It is not clear to the author
which logarithmic variant of Fulton's \(CH^*\), if any, could be used to
construct a logarithmic cycle morphism with all of the desired properties.
Further investigation of this question could be an interesting topic of future
research.

Despite the aforementioned challenges, it is possible to prove a result almost
identical to \cref{conj:hdi-log-struct} by entirely different methods
\cite{GodfreyHDI}.\footnote{The reason the result is only ``almost identical''
is that in \cite{GodfreyHDI} we require ostensibly
stronger hypotheses on the base scheme \(S\) (namely that it is excellent and
noetherian), but it is possible that even in the situation of
\cref{thm:cr-hdi-struct,conj:hdi-log-struct} one can reduce to this case, for
example using noetherian approximation.}

\subsection{Obstructions to obtaining log Hodge correspondences from thrifty birational equivalences}
\label{sec:obstructions}

Let \(\lsp{X}, \lsp{Y}\) be simple normal crossing pairs, and assume in addition
that \(X, Y\) are connected and proper. Let \(Z \subseteq X \times Y \) be a
smooth closed subvariety with codimension \(c \). In
this situation the fundamental class of \(\mathrm{cl}(Z) \in H^c(X \times Y,
\Omega_{X \times Y}^c)\) (no log poles yet) can be described using only Serre
duality, as follows (we refer to \cite[Ex. III.7.4]{MR0463157}). the composition 
\begin{equation}
  \label{eq:res-then-trace}
  H^{\dim Z} (X \times Y, \Omega_{X \times Y}^{\dim Z}) \to H^{\dim Z} (Z,
  \Omega_{Z}^{\dim Z}) \xrightarrow{\tr} k
\end{equation} (where \(\tr\) is the trace map of
Serre duality) is an element of 
\begin{equation}
  \label{eq:serre-duality}
  H^{\dim Z} (X \times Y, \Omega_{X \times Y}^{\dim Z})^\vee \simeq H^c(X
  \times Y, \Omega_{X \times Y}^c)
\end{equation}
which we may \emph{define} to be \(\mathrm{cl}(Z)\).\footnote{It may then be
non-trivial to verify this agrees with other definitions, especially if we worry about signs, but we will not need that level of detail for what follows.}
In light of \cref{thm:log-hodge-corresp} we might hope to modify
\cref{eq:res-then-trace,eq:serre-duality} to obtain a class in \( H^c(X \times
Y, \Omega_{X \times Y}^c(\log \Delta_{X\times Y})(-\mathrm{pr}_X^* \Delta_X))
\). Let us focus on the case where 
\begin{itemize}
  \item \(\mathrm{pr}_X |_Z :Z \to X\), \(\mathrm{pr}_Y |_Z :Z \to Y\) are both
  thrifty and birational, so in particular \(c = \dim X = \dim Y =: d\) and
  \item \((\mathrm{pr}_X|_Z)_*^{-1} \Delta_X = (\mathrm{pr}_Y|_Z)_*^{-1}
  \Delta_Y =: \Delta_Z\)
\end{itemize}
To keep the notation under control, set \(\pi_X := \mathrm{pr}_X|_Z\) and
\(\pi_Y := \mathrm{pr}_Y|_Z\).

In this situation letting \(\iota: Z  \to X \times Y\) be the inclusion there is a natural map 
\[
  \begin{split}
    d \iota^\vee &: \Omega_{X \times Y}^{d}(\log \Delta_{X\times Y}) \to  \iota_* \Omega_{Z}^{d}(\log \Delta_{X\times Y}|_Z)  \text{ and twisting by \(-\mathrm{pr}_Y^*\Delta_Y\) gives a map} \\
    & \Omega_{X \times Y}^{d}(\log \Delta_{X\times Y})(-\mathrm{pr}_Y^*\Delta_Y) \to  \iota_* \Omega_{Z}^{d}(\log \Delta_{X\times Y}|_Z)(-\mathrm{pr}_Y^*\Delta_Y|_Z) = \iota_* \Omega_{Z}^{d}(\log \Delta_{X\times Y}|_Z)(-\pi_Y^* \Delta_Y)
  \end{split}  
\]
To identify \(\Omega_{Z}^{d}(\log \Delta_{X\times Y}|_Z)(-\mathrm{pr}_X^*
\Delta_X|_Z)\), write 
\[
\begin{split}
  (\pi_X)^* \Delta_X &= (\pi_X)^{-1}_* \Delta_X + E_X =
  \Delta_Z + E_X  \text{ and }\\
  (\pi_Y)^* \Delta_Y &= (\pi_Y)^{-1}_* \Delta_Y + E_Y =
  \Delta_Z + E_Y 
\end{split}  
\] 
so that \(\Delta_{X \times Y}|_Z = (\pi_X)^* \Delta_X + (\pi_Y)^* \Delta_Y  = 2
\Delta_Z + E_X + E_Y\). While the hypotheses guarantee \(\Delta_Z\) is reduced
it may be that \(E_X, E_Y\) are non-reduced --- however something can be said
about their multiplicities. If \(E_X = \sum_i a_X^i E_X^i, E_Y = \sum_i a_Y^i
E_Y^i\) where the \(E_X^i, E_Y^i\) are irreducible, then by a generalization of
\cite[Prop. 3.6]{MR0463157} (see also \cite[\S 2.10]{MR3057950}), \[ a_X^i = \mathrm{mlt}(\pi_X (E_X^i) \subseteq
\Delta_X)\] and since \(\Delta_X\) is a reduced effective simple normal crossing
divisor, if in addition we write \(\Delta_X = \sum_i D_X^i\), then
\(\mathrm{mlt}(\pi_X (E_X^i) \subseteq \Delta_X) = \lvert \{i \, | \, \pi_X
(E_X^i) \subseteq D_X^i\} \rvert \). The thriftiness hypothesis that \( \pi_X
(E_X^i) \) is not a stratum then implies \(a_X^i  = \mathrm{mlt}(\pi_X (E_X^i)
\subseteq \Delta_X) < \codim(\pi_X (E_X^i) \subset X)\). Since differentials
with log poles are insensitive to multiplicities, we have 
\[ \Omega_{Z}^{d}(\log \Delta_{X\times Y}|_Z) = \omega_Z(\Delta_Z +
E_X^{\textup{red}}+ E_Y^{\textup{red}}) \] where \(-^\textup{red}\) denotes the
associated reduced effective divisor. Then 
\[
\begin{split}
  \Omega_{Z}^{d}(\log \Delta_{X\times Y}|_Z)(-\pi_Y^* \Delta_Y) &=
  \omega_Z(\Delta_Z + E_X^{\textup{red}}+ E_Y^{\textup{red}} -\Delta_Z -E_Y ) \\
  &\omega_Z(E_X^{\textup{red}} + (E_Y^{\textup{red}} - E_Y)) = \omega_Z(\sum_i E_X^i + \sum_i(1-a_Y^i )E_Y^i)
\end{split}  
\]
The upshot is that we have an induced map 
\begin{equation}
  \label{eq:twisted-res}
  H^d(X \times Y, \Omega_{X \times Y}^{d}(\log \Delta_{X\times Y})(-\mathrm{pr}_Y^*\Delta_Y)) \to H^d(Z, \omega_Z(E_X^{\textup{red}} + (E_Y^{\textup{red}} - E_Y)))
\end{equation}
Here the left hand side is Serre dual to \( H^d(X \times Y, \Omega_{X \times
Y}^{d}(\log \Delta_{X\times Y})(-\mathrm{pr}_X^*\Delta_X)) \), so the
\(k\)-linear dual of \eqref{eq:twisted-res} is a morphism
\[ H^d(Z, \omega_Z(E_X^{\textup{red}} + (E_Y^{\textup{red}} - E_Y)))^\vee \to
H^d(X \times Y, \Omega_{X \times Y}^{d}(\log \Delta_{X\times
Y})(-\mathrm{pr}_X^*\Delta_X))\] Unfortunately\footnote{at least for the
purposes of constructing log Hodge cohomology classes of subvarieties ...}
\(H^d(Z, \omega_Z(E_X^{\textup{red}} + (E_Y^{\textup{red}} - E_Y)))\) is often
0. If \(E_X\) and \(E_Y\) are both reduced (an explicit example where this holds
will be given below), then \(H^d(Z, \omega_Z(E_X^{\textup{red}} +
(E_Y^{\textup{red}} - E_Y))) = H^d(Z, \omega_Z(E_X))\). If in addition \(E_X
\neq 0\), we obtain \(H^d(Z, \omega_Z(E_X)) = 0\) by an extremely weak (but
characteristic independent) sort of Kodaira vanishing:

\begin{lemma}
  Let \(Z\) be a proper variety over a field \(k\) with dimension \(d\), and
  assume \(Z\) is normal and Cohen-Macaulay. If \(D \subset Z\) is a non-0
  effective Cartier divisor on \(Z\) then \(H^d(Z, \omega_Z(D)) = 0\).
\end{lemma}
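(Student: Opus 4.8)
The plan is to reduce to an elementary vanishing of global sections by Serre duality in top degree. Since $Z$ is proper and Cohen--Macaulay of pure dimension $d$ over $k$, it carries a dualizing sheaf $\omega_Z$ (its dualizing complex is concentrated in a single degree, $\omega_Z^\bullet \simeq \omega_Z[d]$), and Grothendieck duality (as in \cref{thm:GD}, applied to the structure morphism $Z \to \Spec k$) specializes to a natural isomorphism $H^d(Z, \mathcal F)^\vee \cong \Hom_{\strshf Z}(\mathcal F, \omega_Z)$ for every coherent sheaf $\mathcal F$. Taking $\mathcal F = \omega_Z(D) = \omega_Z \otimes \strshf Z(D)$, it therefore suffices to prove $\Hom_{\strshf Z}(\omega_Z(D), \omega_Z) = 0$.

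Next I would compute the relevant $\sHom$-sheaf. Writing the $\Hom$ as global sections, $\Hom_{\strshf Z}(\omega_Z(D), \omega_Z) = H^0\big(Z, \sHom_{\strshf Z}(\omega_Z \otimes \strshf Z(D), \omega_Z)\big)$. Because $D$ is Cartier the sheaf $\strshf Z(D)$ is invertible, so it factors out: $\sHom_{\strshf Z}(\omega_Z \otimes \strshf Z(D), \omega_Z) \cong \sEnd_{\strshf Z}(\omega_Z) \otimes \strshf Z(-D)$. The key input is that $\omega_Z$ is reflexive of rank one on the normal variety $Z$; on the regular locus $U \subseteq Z$ (whose complement has codimension $\geq 2$ by normality) $\omega_Z|_U$ is invertible, so $\sEnd_{\strshf Z}(\omega_Z)|_U = \strshf U$, and since both $\sEnd_{\strshf Z}(\omega_Z)$ and $\strshf Z$ are reflexive and agree on $U$ they coincide on all of $Z$. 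Hence $\sHom_{\strshf Z}(\omega_Z(D), \omega_Z) \cong \strshf Z(-D)$ and $H^d(Z, \omega_Z(D))^\vee \cong H^0(Z, \strshf Z(-D))$.

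Finally I would observe that $H^0(Z, \strshf Z(-D)) = 0$: as $Z$ is an integral proper scheme, $H^0(Z, \strshf Z)$ is a field finite over $k$, while $\strshf Z(-D) \subseteq \strshf Z$ is the ideal sheaf of the nonempty effective divisor $D$, so any global section is a global regular function vanishing along $D$ and must be $0$. This yields $H^d(Z, \omega_Z(D)) = 0$. The one genuine subtlety---and the step I expect to require the most care---is that $\omega_Z$ need not be a line bundle (only reflexive of rank one), so the naive form of Serre duality $H^d(Z, \mathcal L)^\vee \cong H^0(Z, \mathcal L^{-1} \otimes \omega_Z)$ for a line bundle $\mathcal L$ is unavailable; this is exactly why I use the $\Hom$-form of duality valid for Cohen--Macaulay schemes and then trivialize $\sEnd_{\strshf Z}(\omega_Z)$ using normality and reflexivity.
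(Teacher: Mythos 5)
Your proof is correct and takes essentially the same route as the paper's: Serre duality in top degree reduces the claim to \(H^0(Z, \strshf{Z}(-D)) = 0\), which is then ruled out by the fact that \(H^0(Z, \strshf{Z})\) is a finite field extension of \(k\) for a proper variety, so a nonzero section would be an invertible function vanishing along the nonempty divisor \(D\). The only difference is one of emphasis: you carefully justify the duality identification \(H^d(Z, \omega_Z(D))^\vee \cong H^0(Z, \strshf{Z}(-D))\) via the \(\Hom\)-form of duality and the trivialization \(\sEnd_{\strshf{Z}}(\omega_Z) \cong \strshf{Z}\) from normality and reflexivity --- a step the paper asserts in one line --- while the paper instead spells out the final vanishing argument in more detail than you do.
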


\begin{proof}
  By Serre duality \(H^d(Z, \omega_Z(D)) = H^0(Z, \mathscr{O}_Z(-D))\), which
  vanishes by the classic fact that ``a nontrivial line bundle and its inverse
  can't both have non-0 global sections.'' Since I am not aware of a specific
  reference, here is a proof:
  
  Suppose towards contraditction that there is a non-0 global section \(\sigma
  \in H^0(Z, \mathscr{O}_Z(-D)) \) --- then the composition
  \[
  \begin{tikzcd}
    \mathscr{O}_Z \arrow[r, "\sigma"] \arrow[rr, bend right, "\tau"] & \mathscr{O}_Z(-D) \arrow[r, hook] &  \mathscr{O}_Z
  \end{tikzcd}  
  \]
  is non-0. By \cite[\href{https://stacks.math.columbia.edu/tag/0358}{Tag
  0358}]{stacks-project} \(H^0(Z,\mathscr{O}_Z)\) is a (normal) domain, and
  since it's also a finite dimensional \(k\)-vector space it must be an
  extension field of \(k\). But then \(\tau \in H^0(Z,\mathscr{O}_Z)\) is
  invertible hence surjective, so \(\mathscr{O}_Z(-D) \inj \mathscr{O}_Z\) is
  surjective, which is a contradiction since by hypothesis the cokernel
  \(\mathscr{O}_D \neq 0\).
\end{proof}

\begin{example}
  Let \(X = \mathbb{P}^2\) and let \(\Delta_X \subset X\) be a line. Let \(p \in
  L \) be a \(k\)-point, let \(Y = \Bl_p X\) and let \(\Delta_Y = \tilde{L} =\)
  the strict transform of \(L\). Finally let \(f : Y \to X\) be the blowup map
  and let \(Z = (f \times \mathrm{id})(Y) \subset X \times Y\). In this case
  (with all notation as above) \(\pi_X \circ (f \times \mathrm{id}) = f\) and
  \(\pi_Y \circ (f \times \mathrm{id})= \mathrm{id}_Y\), so under the
  isomorphism \(f \times \mathrm{id} : Y \simeq Z\), \(E_X \) is the exceptional
  divisor of \(f\) (with multiplicity 1). On the other hand \(E_Y = 0\). In
  particular \(E_X\) and \( E_Y\) are reduced and \(E_X \neq 0\) so from the
  above discussion \(H^2(Z, \omega_Z(E_X)) = 0\).
\end{example}

\end{document}